\definecolor{darkergreen}{rgb}{0.0, 0.5, 0.0}
\numberwithin{equation}{section}
\newcommand{\be}{\begin{eqnarray}}
\newcommand{\ee}{\end{eqnarray}}
\newcommand{\ce}{\begin{eqnarray*}}
\newcommand{\de}{\end{eqnarray*}}
\newtheorem{theorem}{Theorem}[section]
\newtheorem*{theorem*}{Theorem}
\newtheorem{lemma}[theorem]{Lemma}
\newtheorem{proposition}[theorem]{Proposition}
\newtheorem{Examples}[theorem]{Example}
\newtheorem{corollary}[theorem]{Corollary}
\newtheorem{definition}[theorem]{Definition}
\theoremstyle{definition}
\newtheorem{remark}[theorem]{Remark}
\newcommand{\cdummy}{\cdot}
\newcommand{\tmmathbf}[1]{\ensuremath{\mathbf{#1}}}
\newcommand{\tmop}[1]{\ensuremath{\operatorname{#1}}}
\DeclareMathOperator{\supp}{supp}
\def\eps{\varepsilon}
\def\<{{\langle}}
\def\>{{\rangle}}
\def\({{\Big(}}
\def\){{\Big)}}
\def\bx{{\mathbf{x}}}
\def\dif{{\mathord{{\rm d}}}}
\def\no{\nonumber}
\def\={&\!\!=\!\!&}
\def\cL{{\mathcal L}}
\def\cP{{\mathcal P}}
\def\cT{{\mathcal T}}
\def\mN{{\mathbb N}}
\def\mR{{\mathbb R}}
\def\1{{\mathbf{1}}}
\def\E{\mathbf E}
\def\geq{\geqslant}
\def\leq{\leqslant}
\def\div{\mathord{{\rm div}}}
\def\eps{\varepsilon}
\def\<{{\langle}}
\def\>{{\rangle}}
\def\({{\Big(}}
\def\){{\Big)}}
\def\bx{{\mathbf{x}}}
\def\dif{{\mathord{{\rm d}}}}
\def\no{\nonumber}
\def\={&\!\!=\!\!&}
\def\bt{\begin{theorem}}
\def\et{\end{theorem}}
\def\bl{\begin{lemma}}
\def\el{\end{lemma}}
\def\br{\begin{remark}}
\def\er{\end{remark}}
\def\bx{\begin{Examples}}
\def\ex{\end{Examples}}
\def\bd{\begin{definition}}
\def\ed{\end{definition}}
\def\bp{\begin{proposition}}
\def\ep{\end{proposition}}
\def\bc{\begin{corollary}}
\def\ec{\end{corollary}}
\def\geq{\geqslant}
\def\leq{\leqslant}
\def\div{\mathord{{\rm div}}}
 \def\R{\mathbb R}
 \def\R{\mathbb R}    
\def\N{\mathbb N}  
\def\<{\langle} \def\>{\rangle}
\begin{document}

\title[]{A class of supercritical/critical singular stochastic PDE\lowercase{s}: existence, non-uniqueness, non-Gaussianity,\\non-unique ergodicity}

\author{Martina Hofmanov\'a}
\address[M. Hofmanov\'a]{Fakult\"at f\"ur Mathematik, Universit\"at Bielefeld, D-33501 Bielefeld, Germany}
\email{hofmanova@math.uni-bielefeld.de}

\author{Rongchan Zhu}
\address[R. Zhu]{Department of Mathematics, Beijing Institute of Technology, Beijing 100081, China; Fakult\"at f\"ur Mathematik, Universit\"at Bielefeld, D-33501 Bielefeld, Germany}
\email{zhurongchan@126.com}

\author{Xiangchan Zhu}
\address[X. Zhu]{ Academy of Mathematics and Systems Science,
Chinese Academy of Sciences, Beijing 100190, China; Fakult\"at f\"ur Mathematik, Universit\"at Bielefeld, D-33501 Bielefeld, Germany}
\email{zhuxiangchan@126.com}
\thanks{
This project has received funding from the European Research Council (ERC) under the European Union's Horizon 2020 research and innovation programme (grant agreement No. 949981).  The financial support by the DFG through the CRC 1283 ``Taming uncertainty and profiting
 from randomness and low regularity in analysis, stochastics and their applications'' is greatly acknowledged. R.Z. and X.Z. are grateful to
 the financial supports   by National Key R\&D Program of China (No. 2022YFA1006300). 
 R.Z. gratefully acknowledges financial support from the NSFC (No.
  12271030).  X.Z. is grateful to
 the financial supports  by National Key R\&D Program  of China (No. 2020YFA0712700) and the NSFC (No. 12288201, 12090014) and
  the support by key Lab of Random Complex Structures and Data Science,
 Youth Innovation Promotion Association (2020003), Chinese Academy of Science.
}

\begin{abstract}
We study the surface quasi-geostrophic equation with an irregular spatial perturbation
$$
\partial_{t }\theta+ u\cdot\nabla\theta = -\nu(-\Delta)^{\gamma/2}\theta+ \zeta,\qquad u=\nabla^{\perp}(-\Delta)^{-1}\theta,
$$
on $[0,\infty)\times\mathbb{T}^{2}$, with  $\nu\geq 0$, $\gamma\in [0,3/2)$ and $\zeta\in B^{-2+\kappa}_{\infty,\infty}(\mathbb{T}^{2})$ for some $\kappa>0$. This covers the case of $\zeta= (-\Delta)^{\alpha/2}\xi$ for $\alpha<1$ and $\xi$  a spatial white noise on $\mathbb{T}^{2}$. Depending on the relation between $\gamma$ and $\alpha$, our setting is   subcritical, critical or supercritical in the language of Hairer's regularity structures \cite{Hai14}. Based on purely analytical tools from convex integration and without the need of any probabilistic arguments including renormalization, we prove existence of infinitely many analytically weak solutions in $L^{p}_{\rm{loc}}(0,\infty;B_{\infty,1}^{-1/2})\cap C_{b}([0,\infty);B^{-1/2-\delta}_{\infty,1})\cap C^{1}_{b}([0,\infty);B^{-3/2-\delta}_{\infty,1})$ for all  $p\in [1,\infty)$ and $\delta>0$. We are able to prescribe an initial as well as a terminal condition at a finite time $T>0$, and to construct steady state, i.e. time independent, solutions. In all cases, the solutions are non-Gaussian, but we may as well prescribe Gaussianity at some given times. Moreover, a coming down from infinity with respect to the perturbation and the initial condition holds. Finally, we show that the our solutions generate statistically stationary solutions as limits of ergodic averages, and we obtain existence of infinitely many non-Gaussian time dependent ergodic stationary solutions. We also extend our results to a more general class of singular SPDEs.
 \end{abstract}

\date{\today}

\maketitle

\tableofcontents

\section{Introduction}

We are concerned with the question of well/ill-posedness as well as long time behavior of the  surface quasi-geostrophic (SQG) equation  on $\mathbb{T}^2$ driven by an irregular spatial  perturbation
\begin{equation}\label{eq2}
\begin{aligned}
\partial_{t }\theta+ u\cdot\nabla\theta &= -\nu\Lambda^{\gamma}\theta+ \zeta,\\
u=\nabla^\perp\Lambda^{-1}\theta&=(-\partial_2\Lambda^{-1}\theta,\partial_1\Lambda^{-1}\theta)=(-\mathcal{R}_2\theta, \mathcal{R}_1\theta),\\
\theta(0)&=\theta_{0},
\end{aligned}
\end{equation}
where $\Lambda=(-\Delta)^{1/2}$, $\nu\geq0$, $\gamma\in[0,3/2)$ and   $\mathcal{R}=(\mathcal{R}_1,\mathcal{R}_2)$ is the pair of Riesz transforms. Here the unknown scalar function $\theta$ denotes the potential temperature in the geophysical fluid dynamics.
The spatial perturbation $\zeta$ is merely assumed to belong to $B^{-2+\kappa}_{\infty,\infty}(\mathbb{T}^{2})$ for some $\kappa>0$. This particularly includes the case of $\zeta=\Lambda^{\alpha}\xi$, where $\xi$ is a space white noise and  $\alpha<1$. While this is  the main example we have in mind, our construction is entirely deterministic and does not use any probability theory, treating therefore an arbitrary distribution $\zeta$ in this regularity class.

In the language of regularity structures \cite{Hai14}, our setting includes the supercritical, critical as well as subcritical regime, meaning, the smoothing of the linear part does not need to dominate the nonlinear term. This can be determined by a simple scaling argument. In the case of \eqref{eq2} with $\zeta=\Lambda^{\alpha}\xi$,  it can be seen that $\tilde\xi(x):=\lambda\xi(\lambda x)$ is again a space white noise in two dimensions. Hence letting $\tilde\theta(t,x):=\lambda^{1+\alpha-\gamma}\theta(\lambda^{\gamma}t,\lambda x)$ and $\tilde u(t,x):=\lambda^{1+\alpha-\gamma}u(\lambda^{\gamma}t,\lambda x)$ leads to
$$
\partial_{t}\tilde\theta +\lambda^{2\gamma-2-\alpha}\tilde u\cdot\nabla\tilde\theta = - \nu\Lambda^{\gamma}\tilde\theta +\Lambda^{\alpha}\tilde\xi.
$$
Consequently, the equation is subcritical if $2\gamma-2-\alpha>0$, critical if $2\gamma-2-\alpha=0$ and supercritical if $2\gamma-2-\alpha<0$. As our results are valid for any $\gamma\in [0,3/2)$, $\alpha<1$ and $\nu\geq 0$, they include all these three regimes.

This notion of supercriticality/criticality/subcriticality can be also understood in terms of regularity as follows. In view of  the regularity of space white noise it holds $\zeta=\Lambda^{\alpha}\xi\in B^{-1-\alpha-\kappa}_{\infty,\infty}$ for any $\kappa>0$. Hence, it follows formally from Schauder estimates that the solution $\theta$ belongs to $B^{\gamma-1-\alpha-\kappa}_{\infty,\infty}$ and $u$ has the same regularity. Hence, if this regularity is negative, i.e. $\theta$ and $u$ are not function valued, we obtain again formally that the nonlinear term  belongs to $B^{2\gamma-3-2\alpha-\kappa}_{\infty,\infty}$ provided it can be made sense of. If this regularity is smaller/equal/bigger than the regularity of the Gaussian noise, we are in the supercritical/critical/subcritical case.  Thus, intuitively,  the subcritical case can be understood as a perturbation of the solution to the linear equation, which is Gaussian.

\subsection{Supercritical/critical singular SPDEs}

Hairer's regularity structures theory \cite{Hai14} and the paracontrolled distributions method by Gubinelli, Imkeller and Perkowski  \cite{GIP15}
	 made it possible to study a large class of singular stochastic PDEs in the subcritical regime. The key ideas of these theories are to view nonlinearities  as perturbations of Gaussian terms and to use the structure of the solution to give a meaning to the terms which are not classically well-defined. These terms are well-defined with the help of renormalization for the enhanced noise, i.e. the noise and the higher order terms appearing in the decomposition of the equations. However, these tricks break down in the critical and supercritical settings as the nonlinear terms can no longer be viewed as perturbations of Gaussian random variables. Formally, there are infinitely many terms that require renormalization.
	
	 For this reason, the rigorous understanding of critical and supercritical models is very limited. In the critical regime, the 2D isotropic Kardar--Parisi--Zhang (KPZ) equation was considered in \cite{CD20, CSZ20, Gu20} via the Cole--Hopf transform. The 2D anisotropic KPZ equation was studied in \cite{CES21, CET21} based on the related invariant measure given by a Gaussian measure. Recently, for a two dimensional  critical Navier--Stokes equations, tightness of approximate stationary solutions and non-triviality of the limit has been established in a weak coupling regime, i.e. with a vanishing constant in front of the nonlinearity, in  \cite{CK21}. We also mention the recent progress in \cite{CSZ21} for the critical 2D stochastic heat flow. In the supercritical regime, a recent series of works \cite{MU18, CCM20, DGRZ20, CNN20}   studied the KPZ equation for $d\geq 3$. More precisely, the $d$-dimensional KPZ and the two dimensional critical Navier--Stokes equations were understood as the limit of  regularized equations and in the approximate equations a vanishing scaling constant was included in front of the nonlinearity.
Additionally, the limit is then given by a stochastic heat equation whose law is  Gaussian  (except \cite{CSZ21}).

In \cite{GJ13},  global stationary probabilistically weak solutions for stochastic fractional burgers equation were constructed both in critical and supercritical regime via the energy solution method, which depends on the related invariant measure given by the law of space white noise.
Apart from the models mentioned above, another classical example of a singular SPDE is
	the dynamical $\Phi^4_d$ model. The
	critical case corresponding to $d=4$ is  out of reach at this moment. In a remarkable recent
	work \cite{ADC21}, it was shown that at large scales the formal $\Phi^4_4$ invariant measure
	 is Gaussian.

To summarize, the available results for singular SPDEs in the critical and supercritical regime rely on a particular structure of the models using a certain transform, a vanishing scaling constant in front of the nonlinearity (the so-called weak coupling regime)  or  probabilistic arguments using a Gaussian invariant measure. Our technique only requires the regularity of the driving noise and can be applied to the equation driven by deterministic forcing or a general noise, not necessarily Gaussian and without a Gaussian invariant measure. Moreover, we do not need to include a small scaling constant in front of  the nonlinearity and the constructed solutions are not trivial, i.e. not Gaussian.

We also mention the stochastic quantization of the 4D Yang--Mills theory, which plays an important role in
the standard model of quantum mechanics. It is a critical model and the construction of 4D Yang--Mills field is related to one of the Millennium Prize Problems (c.f. \cite{JW06}). Note that local unique solutions to the Langevin dynamics for the Yang--Mills model
	on the two and three dimensional continuous torus were recently constructed in \cite{CCHS20,
	CCHS22} by using the theory of regularity structures.

\subsection{Convex integration}

Since the seminal works by De~Lellis and Sz\'ekelyhidi \cite{DelSze2,DelSze3,DelSze13}, convex integration has become a powerful tool in the context of fluid dynamics. It led to a large number of striking results concerning e.g. the Euler equations \cite{BDLIS16,BDSV,Ise}, the Navier--Stokes equations \cite{BCV18,BV19a,CL20,CL21}, the compressible Euler equations \cite{C14, F16}, power law fluids \cite{BMS20} and others. The method was also successfully applied in the context of the deterministic SQG equation in \cite{BSV19, CKL21, IM21}. Various results were  obtained  in the stochastic setting in \cite{BFH20,CFF19,HZZ19, HZZ20,HZZ21a,HZZ21,LZ22, RS21,Ya20a,Ya20b,Ya21,Ya21b,Ya21c}. Let us particularly point out our previous work \cite{HZZ21} where convex integration was  for the first time applied in the setting of singular SPDEs. In particular, we proved global existence and non-uniqueness  to the 3D Navier--Stokes system perturbed by a space-time white noise. This is a subcritical problem. As the noise was also rough in time, it was necessary to use the power of renormalization combined with paracontrolled calculus in order to make sense of the equations.

The method of convex integration brings a completely new perspective in the field of  (S)PDEs. If we try to solve \eqref{eq2} by classical PDE arguments we see the problem: formally by Schauder estimates the regularity of a solution to \eqref{eq2} is  $B_{\infty,\infty}^{\gamma-1-\alpha-\kappa}$. Hence, only if $\gamma>1+\alpha+\kappa$  the nonlinear term is analytically well-defined as in this case $\theta$ and $u$ are function valued. The usual fixed point argument therefore breaks down for  $\gamma\leq 1+\alpha$. For $1+\alpha/2<\gamma\leq 1+\alpha$, i.e. in the subcritical case, the regularity structures theory gives a local existence and uniqueness of solutions. However, it does not work in the critical and supercritical regime.

As we discuss below, the nonlinear term is  well-defined even for $\theta\in \dot{H}^{-1/2}$ which leads to a notion of weak solution. In the case of $\gamma>1+2\alpha$ the corresponding estimate for the $H^{-1/2}$-norm of the solution $\theta$ can be closed leading to the existence of global-in-time weak solutions.   Nevertheless, as seen above, this is way above the subcriticality level and in this setting the solution is actually function valued by Schauder estimates anyway, i.e. no renormalization is required. Hence, this is of no help to the fixed point argument typically used in proofs of local well-posedness for singular SPDEs in the subcritical regime. Also critical and supercritical setting is out of reach by this method.

Let us explain the main ideas of the convex integration approach to the SQG equation. It is a more constructive way towards existence of solutions  than the usual PDE arguments. It is an iterative procedure which relies on building blocks tailor made to the PDE at hand. Different PDEs require different structure of these blocks. The building blocks for the SQG equation in \cite{CKL21} are frequency localized oscillatory waves  $\cos(\lambda_{n+1} l\cdot x)$ for suitable $l\in \mathbb{R}^{2}$. The  iteration starts from an approximate solution, i.e. a solution satisfying the equation up to a certain error. At each iteration step  $n+1$, one adds a perturbation of the form $\tilde f_{n+1}=\sum_{l}a_{l,n+1}(x)\cos(\lambda_{n+1} l\cdot x)$ for well-chosen amplitude functions $a_{l,n+1}(x)$ supported on much smaller frequencies {than the oscillatory wave $\cos(\lambda_{n+1} l\cdot x)$. Consequently, the frequencies of $\tilde{f}_{n+1}$ are supported in an annulus of size $\lambda_{n+1}$. Hence its product with the previous iteration, which is supported on much smaller frequencies of size at most $\lambda_{n}$, is always well defined. The idea is to define the amplitude functions so that  after the perturbation is plugged into the nonlinearity, the corresponding   product reduces the error. This way, the building blocks propagate oscillations through the nonlinearity and no ill-definiteness problem for the product of the perturbation terms appears due to the special structure of the building blocks.} Repeating this procedure countably many times, one obtains a solution to the desired equation in the limit.

In our proof, we add the noise  scale by scale during the iteration. This way, we are able to preserve the frequency localization of the perturbations. Furthermore and  rather surprisingly, even a noise of such a low regularity  as in the critical and supercritical regime can be viewed as a small error in this procedure. As a matter of fact, since the dissipative term is anyway treated as a lower order term and not used to gain regularity, we may drop it altogether.
Moreover, we may also consider a time dependent noise provided it is function valued with respect to time. A time irregular noise requires further ideas and will be considered in a future work. In addition, it turns out that our approach can also be applied to other models, see Section \ref{other} for more details.  As we mentioned above, the stochastic quantization of the 4D Yang--Mills theory is a critical SPDE. We hope that our work and the convex integration method can shed some light on the understanding of this important model.

\subsection{Main results}

As in the related deterministic works \cite{BSV19, CKL21, IM21}, we make use of the fact that the nonlinearity in \eqref{eq2} is well-defined for $\theta\in L^{2}_{\rm{loc}}(0,\infty;\dot{H}^{-1/2})$. More precisely, for any $\psi\in C^{\infty}(\mathbb{T}^{2})$  it holds
$$
\langle u\cdot\nabla\theta,\psi\rangle =\frac12 \langle \theta,[\mathcal{R}^{\perp}\cdot,\nabla\psi]\theta\rangle,
$$
and the commutator $[\mathcal{R}^{\perp}\cdot,\nabla\psi]=-[\mathcal{R}_2\cdot,\partial_1\psi]+[\mathcal{R}_1\cdot,\partial_2\psi]$ maps $\dot{H}^{-1/2}$ to $\dot{H}^{1/2}$ (c.f. \cite[Proposition 5.1]{CKL21}). Hence,
we aim at solving \eqref{eq2} in the following sense.

\begin{definition}\label{d:1}
  We say that $\theta \in L^{2}_{\rm{loc}}(0,\infty;\dot{H}^{-1/2})\cap C([0,\infty);B^{-1}_{\infty,1})$ is a weak solution to \eqref{eq2}
  provided
for any $t\geq0$
\begin{align*}
\langle \theta(t),\psi\rangle+\int_0^t\frac12\langle \Lambda^{-1/2}\theta,\Lambda^{1/2}[\mathcal{R}^\perp\cdot,\nabla \psi] \theta\rangle\dif t
 = \langle \theta_0,\psi\rangle + \int_{0}^{t}\langle\zeta -\nu\Lambda^{\gamma } \theta
,\psi\rangle\dif t
\end{align*}
holds  for every $\psi \in C^{\infty}(\mathbb{T}^{2})$.
  In case of $\zeta=\Lambda^{\alpha}\xi$ for  a spatial white noise  $\xi$ on a probability space $(\Omega,\mathcal{F},\mathbf{P})$, we require in addition that $\theta$ is $\mathcal{F}$-measurable.
\end{definition}

In the first step, we are interested in the question of well/ill-posedness. Our  results may be briefly summarized as follows and we refer to Theorem~\ref{thm:6.1}, Theorem~\ref{thm:6.11}, Corollary~\ref{cor:law} and Corollary~\ref{c:coming1} for precise formulations and further details.

\begin{theorem}
There exist infinitely many
\begin{enumerate}
\item weak solutions on $[0,\infty)$ for any  prescribed initial condition $\theta_{0}\in C^{\eta}$ $\mathbf{P}$-a.s., $\eta>1/2$,
\item weak solutions on $[0,T]$ for any prescribed initial and terminal  condition $\theta_{0}, \theta_{T}\in C^{\eta}$ $\mathbf{P}$-a.s., $\eta>1/2$, $T\geq4$.
\end{enumerate}
Moreover, the solutions are non-Gaussian and satisfy a coming down from infinity with respect to the noise as well as the initial condition.
\end{theorem}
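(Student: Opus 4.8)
The plan is to prove the theorem via a convex integration scheme adapted to the SQG equation with rough forcing, building on the deterministic constructions of \cite{BSV19,CKL21,IM21} but incorporating the perturbation $\zeta$ scale by scale. I would first reformulate \eqref{eq2} (dropping the dissipative term, which we treat as a negligible lower-order contribution since it is never used to gain regularity) as an iterative relaxation: at level $n$ we seek $(\theta_n, R_n)$ solving $\partial_t \theta_n + \div(u_n \theta_n) = \div R_n + \zeta_n$, where $\zeta_n$ is a frequency truncation of $\zeta$ converging to it, $R_n$ is the Reynolds-type stress, $u_n = \mathcal{R}^\perp \theta_n$, and where we carry quantitative control of $\|\theta_n\|$ in $B^{-1/2-\delta}_{\infty,1}$ (and in time-derivative norms) together with $\|R_n\|$ decaying geometrically along a superexponentially growing frequency sequence $\lambda_n = a^{(b^n)}$. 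The base step starts from $\theta_0 = 0$ (or a suitable mollification of the prescribed data), so that $R_0$ absorbs both the initial condition and $\zeta_0$.

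The heart of the scheme is the inductive step $n \mapsto n+1$. Here I would add a perturbation $\theta_{n+1} - \theta_n = w_{n+1} = \sum_{l} a_{l,n+1}(t,x)\,\Theta_{l,\lambda_{n+1}}(x)$ built from the SQG building blocks of \cite{CKL21} — frequency-localized oscillatory waves at frequency $\lambda_{n+1}$ with slowly varying amplitudes $a_{l,n+1}$ chosen, via a suitable geometric/algebraic lemma, so that the principal part of $\div(w_{n+1}\otimes u_n + \text{symmetric terms})$ cancels $\div R_n$. Since the amplitudes live at much lower frequency than $\lambda_{n+1}$, the product of $w_{n+1}$ with the lower-frequency iterate $\theta_n$ is classically well-defined, and crucially the product $w_{n+1}$ with itself is handled by the standard orthogonality/high-low frequency cancellation built into the blocks. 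The new ingredient compared to the deterministic case is that when we add $\zeta_{n+1} - \zeta_n$ to the equation at this step, its low regularity ($B^{-2+\kappa}_{\infty,\infty}$) means it contributes only to the error $R_{n+1}$, and one shows that with the right choice of parameters (the gap $b$ between frequencies and the mollification scale $\ell_n$) this contribution is summable and does not obstruct the geometric decay of $\|R_{n+1}\|$. One then verifies all the inductive estimates: the $C_b([0,\infty);B^{-1/2-\delta}_{\infty,1})$ and $C^1_b$ bounds, the new stress bound, and the frequency support localization.

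Passing to the limit $n \to \infty$ gives $\theta = \lim \theta_n$ with $R_n \to 0$ and $\zeta_n \to \zeta$, hence a weak solution in the sense of Definition~\ref{d:1} lying in the claimed regularity class (the commutator structure $\langle \theta, [\mathcal{R}^\perp\cdot, \nabla\psi]\theta\rangle$ makes the nonlinear term meaningful for $\theta \in \dot H^{-1/2}$). Non-uniqueness — in fact infinitely many solutions — follows by the usual device of leaving room in the first steps: one can prescribe the $L^2$-energy profile $t \mapsto \|\theta(t)\|_{L^2}^2$ (or some analogous functional) on a chosen set of times from an uncountable family, so distinct profiles yield distinct solutions. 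The prescribed initial/terminal condition (item (2)) is handled by a gluing argument: run the scheme on $[0,1]$ starting from $\theta_0$, on $[T-1,T]$ backward from $\theta_T$, solve a trivial transport-type connection on the middle interval, and convex-integrate away the resulting stress while keeping the endpoints fixed — this needs $T \geq 4$ to have enough room. Measurability in the white noise case follows since the whole construction is a deterministic continuous (indeed explicit) functional of $\zeta$. Finally, non-Gaussianity is obtained by noting that the solutions satisfy pathwise $L^2$ bounds (a ``coming down from infinity'': the $\dot H^{-1/2}$ or $L^2$ norm at positive times is bounded by a deterministic function of the noise norm and $t$, independent of $\theta_0$), so the law cannot be Gaussian as a Gaussian field of this regularity would have unbounded moments incompatible with the deterministic bound — or more directly, by prescribing a nonconstant deterministic energy profile that no Gaussian can match.

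The main obstacle I expect is calibrating the parameters so that the very rough forcing $\zeta \in B^{-2+\kappa}_{\infty,\infty}$ genuinely fits inside the error budget: one must mollify $\zeta$ at a scale fine enough that $\zeta_{n+1}-\zeta_n$ is small in the relevant negative Besov/Sobolev norm after being integrated against the stress equation, yet coarse enough that $\|\zeta_n\|$ in positive-regularity norms (needed to control amplitudes and transport estimates) does not blow up faster than the oscillation frequency $\lambda_{n+1}$ can absorb. Balancing this against the SQG-specific constraint that the building blocks only gain a limited amount of regularity through the nonlinearity (the commutator gains one derivative, placing us right at the $\dot H^{-1/2}$ threshold) is the delicate point — but since $\zeta$ enters purely additively and the scheme never needs the dissipation, I expect this to close for all $\gamma \in [0,3/2)$ and all $\zeta$ in the stated class, exactly as claimed.
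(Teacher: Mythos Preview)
Your overall convex-integration architecture---iterating approximate solutions with frequency-localized SQG building blocks \`a la \cite{CKL21}, adding the rough forcing $\zeta$ scale by scale so it enters only as a small error at each step, and treating the dissipation as a lower-order term---matches the paper's strategy. The paper works with $f=\Lambda^{-1}\theta$ rather than $\theta$ directly, and uses explicit time cut-off functions $\chi,\tilde\chi$ to ensure the perturbation vanishes near $t=0$ so the prescribed initial datum (also added scale by scale via $P_{\leq\lambda_n/3}\Lambda^{-1}\theta_0$) is preserved exactly; you should make this mechanism explicit, since without it the initial value would be destroyed by the mollification/perturbation.

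Two of your mechanisms differ from the paper and one of them has a real gap. For non-uniqueness you propose prescribing the $L^2$-energy profile, but the solutions constructed here live only in $\dot H^{-1/2}$ (indeed $B^{-1/2-\delta}_{\infty,1}$), not in $L^2$, so $\|\theta(t)\|_{L^2}$ is not available and the Buckmaster--Vicol energy-pumping device does not transfer directly. The paper instead uses Isett's sign-switching trick: replacing one amplitude $a_{1,n+1}$ by $(-1)^{\sigma(n+1)}a_{1,n+1}$ for an arbitrary $\sigma:\mathbb N\to\{1,2\}$ leaves all estimates unchanged but produces a lower bound on $\|f^{\sigma_1}-f^{\sigma_2}\|_{L^2}$ via an explicit Fourier computation, yielding a continuum of distinct solutions. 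For the terminal condition you propose a gluing argument (forward from $\theta_0$, backward from $\theta_T$, connect in the middle); the paper does \emph{not} glue but runs a single iteration on $[0,T]$ with symmetric time cut-offs near both endpoints, adding $P_{\leq\lambda_n/3}\Lambda^{-1}v$ scale by scale where $v$ interpolates the data---this is simpler and avoids the need for any ``transport-type connection'', which for SQG would itself require justification. Finally, the paper's coming-down-from-infinity is not a pathwise $\dot H^{-1/2}$ bound independent of $\theta_0$; it is the statement that, by choosing the terminal value, one can build solutions with $\|\theta\|_{C_b([T,\infty);B^{-1/2-\delta}_{\infty,1})}\leq\varepsilon$ for \emph{any} prescribed $\varepsilon>0$, independently of both $\theta_0$ and $\zeta$.
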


 The reason why we require a smooth initial condition compared to \cite{HZZ21} is that we are now in a more singular situation, i.e. critical or supercritical regime in the sense of Hairer's regularity structure while  \cite{HZZ21} only deals with the subcritical regime. In the present paper, we do not rely on a  mild formulation to give meaning to the equation, but instead we understand the equation in a weak sense. From Definition~\ref{d:1} we know that to give a meaning to the solution it only suffices that it stays in $\dot H^{-1/2}$. However, in the convex integration estimate we still need to bound the product of  $u$ and $\theta$. Formally the main irregular part is in $\dot H^{-1/2}$ and we need to estimate the initial part multiplied by the main irregular part, which requires the condition $\eta>1/2$.

The core of our proofs is an iterative convex integration procedure in the spirit of \cite{CKL21}. In the latter work,  existence and non-uniqueness of steady state weak solutions to the SQG equation \eqref{eq2} with $\zeta=0$ was proved. We show that the spatial irregular perturbation $\zeta$ can be essentially treated as a lower order term and added scale by scale as one proceeds through the iteration. In a similar fashion we can treat the  initial as well the terminal condition. The coming down from infinity  provides sharp bounds independent $\zeta$ and of the initial condition.

As the next step, we study the long time behavior, existence of stationary solutions and the associated ergodic structure. Due to the lack of uniqueness, we understand stationarity in the sense of shift invariance of the law on the space of trajectories, see also \cite{BFHM19, BFH20e,FFH21}. To be more precise, we say that a weak solution $\theta$ is a stationary solution provided the probability laws $\cL[\theta]$ and $\cL[\theta(\cdot+t)]$ of $\theta$ and $\theta(\cdot+t)$, respectively,
coincide as probability measures on $C({\mR};B^{-1}_{\infty,1})$ for all $t\geq 0$. Accordingly, we say that a stationary solution $\theta$ is ergodic, provided
$
\cL[\theta](B)=1$ or $\cL[\theta](B)=0$
for any $B\subset C({\mR};B^{-1}_{\infty,1})$ Borel and shift invariant.

Note that this setting is different from the usual setting of invariance with respect to a Markov semigroup. The latter notion can be  applied to problems with uniqueness, i.e. where the Markov property holds. The construction of invariant measures then additionally requires the  Feller property which corresponds to continuous dependence on initial condition. Since non-uniqueness holds true for \eqref{eq2}, and it is unclear whether Markov solutions can be obtained by a selection procedure, we employ the more general notion of invariance with respect to shifts on trajectories.
Another advantage is that continuity of the shift operators comes for free and therefore there is no need for any Feller property.
Also with this notion of invariance, the existence of an ergodic stationary solution implies the validity of the so-called ergodic hypothesis, i.e. the fact that ergodic averages along trajectories of the ergodic solution converge to the ensemble average given by its law.

The following is proved Theorem~\ref{th:s1}, Corollary~\ref{cor:4.3}, Theorem~\ref{thm:4.5},  Corollary~\ref{cor:4.6} and Theorem~\ref{thm:4.1}.

\begin{theorem}
There exist {infinitely many non-Gaussian}
\begin{enumerate}
\item   stationary solutions,
\item   ergodic stationary solutions,
\item  steady state, i.e. time independent, solutions.
\end{enumerate}
Moreover, the ergodic stationary solutions are time dependent. The point (3)  additionally implies existence and non-uniqueness of solutions to the corresponding elliptic and wave equation.
\end{theorem}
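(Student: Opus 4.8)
**Proof proposal for the second main theorem (existence of infinitely many non-Gaussian stationary, ergodic, and steady-state solutions).**

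The plan is to build everything on top of the convex integration machinery already established for the first main theorem. The starting point is the observation that the convex integration scheme of Theorem~\ref{thm:6.1} produces, for a given (random) perturbation $\zeta$ and given endpoint data, a whole family of weak solutions on $[0,T]$, and that the construction is stable and quantitative: the iterates $\theta_n$ obey bounds in $L^p_{\rm loc}(0,\infty;B^{-1/2}_{\infty,1})\cap C_b([0,\infty);B^{-1/2-\delta}_{\infty,1})\cap C^1_b([0,\infty);B^{-3/2-\delta}_{\infty,1})$ that depend only on the $B^{-2+\kappa}_{\infty,\infty}$-norm of $\zeta$ and not on the time horizon. This uniform-in-time control is exactly what is needed to run a Krylov--Bogoliubov type argument in the trajectory space. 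Concretely, for the stationary statement (1), I would fix a weak solution $\theta$ on $[0,\infty)$ as in Theorem~\ref{thm:6.1}, define the ergodic averages
\begin{equation*}
\mu_T:=\frac1T\int_0^T \cL\big[\theta(\cdot+t)\big]\,\dif t
\end{equation*}
as probability measures on $C(\mR;B^{-1}_{\infty,1})$ (after extending $\theta$ appropriately to negative times, or working on the forward half-line and then passing to a two-sided limit), show tightness of $\{\mu_T\}$ using the compact embeddings $B^{-1/2-\delta}_{\infty,1}\hookrightarrow B^{-1}_{\infty,1}$ together with the $C^1_b$-in-time bound (which gives equicontinuity in time), extract a weakly convergent subsequence $\mu_{T_k}\to\mu$, and finally check that any such limit $\mu$ is shift-invariant by the usual estimate $|\mu_T(F(\cdot+s))-\mu_T(F)|\lesssim \|F\|_\infty s/T\to0$. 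The limit measure $\mu$ is then the law of a stationary weak solution, provided one verifies that the set of weak solutions (in the sense of Definition~\ref{d:1}, with the measurability clause) is closed under weak convergence of laws — this is a Skorokhod-representation plus passage-to-the-limit argument in the weak formulation, where the only nontrivial term is the quadratic commutator $\langle\Lambda^{-1/2}\theta,\Lambda^{1/2}[\cR^\perp\cdot,\nabla\psi]\theta\rangle$, and continuity of that term under $B^{-1/2-\delta}_{\infty,1}$-convergence follows from the mapping property of the commutator recalled before Definition~\ref{d:1} together with an interpolation/uniform-integrability argument using the $L^p(0,\infty;B^{-1/2}_{\infty,1})$ bound.

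For the ergodic statement (2), I would invoke the standard Krein--Milman / ergodic decomposition argument: the set $\cI$ of laws of stationary weak solutions is convex, and — thanks to the tightness above — weakly compact, hence it has extreme points by Krein--Milman, and one shows by the classical argument that an extreme point of $\cI$ is precisely an ergodic stationary law (any shift-invariant Borel set $B$ with $0<\mu(B)<1$ would split $\mu$ into a nontrivial convex combination of two stationary laws, contradicting extremality). To see that the extreme points actually exist and are nontrivial one needs $\cI\neq\emptyset$, which is (1). To get \emph{infinitely many} ergodic stationary solutions, and to ensure they are \emph{non-Gaussian} and \emph{time dependent}, I would use the flexibility built into the convex integration scheme: by prescribing different "profiles" (different choices of the free parameters/amplitudes in the iteration) one produces stationary solutions whose one-time marginals have prescribed features — in particular one can arrange that $\cL[\theta(t)]$ has a non-Gaussian distribution (e.g. by controlling a suitable nonlinear functional, or by noting that a Gaussian field in $\dot H^{-1/2}$ with the relevant covariance is incompatible with the pathwise support properties of the convex integration solutions), and that $\theta$ is genuinely time dependent (again by a direct construction, e.g. choosing the perturbations so that $\theta(0)\neq\theta(1)$ with positive probability, which is preserved along the averaging since the bound is uniform in time). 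Distinctness of infinitely many such laws then follows by choosing a sequence of profiles that are separated in some weak observable.

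For the steady-state statement (3), I would argue directly rather than via averaging: run the convex integration scheme with a \emph{time-independent} ansatz, i.e. seek $\theta$ solving $u\cdot\nabla\theta=-\nu\Lambda^\gamma\theta+\zeta$ in the weak sense, which is exactly the setting of \cite{CKL21} with the extra source $\zeta$ added scale by scale; since $\zeta$ itself is time independent, every iterate can be taken time independent, and the limit is a steady-state solution. Non-uniqueness and the "infinitely many, non-Gaussian" assertions are again read off from the freedom in the scheme. The final sentence of the theorem — that (3) implies existence and non-uniqueness for the associated elliptic and wave equations — is then immediate: a time-independent weak solution of \eqref{eq2} is by definition a weak solution of the elliptic problem $u\cdot\nabla\theta+\nu\Lambda^\gamma\theta=\zeta$, and it is also a (trivially time-independent, hence stationary in the strong sense) weak solution of the corresponding wave equation $\partial_{tt}\theta+u\cdot\nabla\theta+\nu\Lambda^\gamma\theta=\zeta$, so infinitely many steady states give infinitely many solutions of each.

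I expect the main obstacle to be the closedness of the solution set under weak convergence of laws, i.e. the passage to the limit in the nonlinear term of the weak formulation along the ergodic averages. The difficulty is that the natural uniform bound lives in $B^{-1/2}_{\infty,1}$ only in an $L^p_t$-averaged sense while the quadratic term $\langle\Lambda^{-1/2}\theta,\Lambda^{1/2}[\cR^\perp\cdot,\nabla\psi]\theta\rangle$ is genuinely quadratic, so one must combine the commutator estimate with careful use of the $L^p_{t}$ integrability (to upgrade to uniform integrability and exclude concentration of the nonlinearity) and with the time-regularity bound in $C^1_b([0,\infty);B^{-3/2-\delta}_{\infty,1})$ (to get compactness in time); handling the joint space-time compactness together with the quadratic nonlinearity under only distributional spatial regularity is the delicate point, and it is where the specific exponents in Definition~\ref{d:1} and in the convex integration output have to match up exactly. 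A secondary, softer obstacle is making the non-Gaussianity and time-dependence of the \emph{ergodic} (extreme) solutions precise rather than just for generic stationary solutions, for which one typically transfers a pathwise property (stable under the averaging and the weak limit) that no Gaussian measure on the relevant space can have.
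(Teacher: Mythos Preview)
Your overall architecture is right and matches the paper: Krylov--Bogoliubov averaging on trajectory space for (1), Krein--Milman for (2), and a time-independent rerun of the convex integration for (3). The Skorokhod + commutator-continuity argument you sketch for closedness of the solution set is also what the paper does.

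There are, however, two genuine gaps.

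\textbf{Infinitely many and non-Gaussian, in a way that survives averaging and Krein--Milman.} Your plan here is vague (``prescribing different profiles'', ``controlling a suitable nonlinear functional''). The paper's device is concrete and is what makes the whole chain work: for each deterministic $\theta_0$ with finitely many Fourier modes, the convex integration output satisfies the pathwise localization
\[
\|\theta-\theta_0\|_{C_b(\mR;B^{-1/2-\delta}_{\infty,1})}\leq \varepsilon
\]
(cf. \eqref{eq:32}/\eqref{induction w1}). This bound is shift-invariant and closed, hence it passes through the ergodic averages and the Skorokhod limit verbatim, and it also cuts out a \emph{closed convex} subset of stationary laws on which one can run Krein--Milman. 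Different centers $\theta_0$ give disjoint such balls, which yields infinitely many distinct stationary \emph{and} ergodic laws; boundedness in $B^{-1/2-\delta}_{\infty,1}$ forces every nonconstant Fourier mode to be a bounded non-constant random variable, hence non-Gaussian. Without this localization trick you have no mechanism to separate the laws after averaging or to pin down the extreme points.

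\textbf{Time-dependence of the ergodic solutions.} Your proposal --- arrange $\theta(0)\neq\theta(1)$ and claim this ``is preserved along the averaging'' --- does not work: the ergodic limit of time-shifted laws can very well concentrate on steady states, and Krein--Milman gives you an abstract extreme point over which you have no direct control. The paper proceeds indirectly: it shows that \emph{every} steady state solution fails to be ergodic. Indeed, a steady state $\theta$ is not a.s.\ constant (it solves the equation with nontrivial $\zeta$), so some Fourier coefficient $\hat\theta(k)$ is nondegenerate; choosing $K$ with $\mathbf{P}(\hat\theta(k)>K)\in(0,1)$ and setting $B=\{g:\hat g(k)>K\}$ gives a shift-invariant Borel set $C(\mR;B)\times C(\mR;B^{-2+\kappa}_{p,p})$ of intermediate probability. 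Hence no steady state is ergodic, and since ergodic stationary solutions exist, they must be time dependent. You should replace your direct argument by this one.
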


\subsection{Further relevent literature} Stochastic surface quasi-geostrophic equation driven by a trace-class
	noise, was studied in \cite{RZZ14, RZZ15, ZZ17}. Global well-posedness  and ergodicity were obtained in \cite{RZZ15} for $\gamma\in (1,2)$. It is
	shown in \cite{RZZ14} that the linear multiplicative noise prevents the system from exploding with a large probability. More recently, in \cite{FS21} local well-posedness was proved for the surface quasi-geostrophic equation driven by space-time white
	noise in the subcritical regime (i.e. $\gamma>4/3$) using the theory of regularity structures.
	
\section{Notations}
\label{s:not}

\subsection{Function spaces}

  Throughout the paper, we employ the notation $a\lesssim b$ if there exists a constant $c>0$ such that $a\leq cb$, and we write $a\simeq b$ if $a\lesssim b$ and $b\lesssim a$. $\mN_{0}:=\mN\cup \{0\}$. Given a Banach space $E$ with a norm  $\|\cdot\|_E$, we write  $CE$ or $C([0,\infty);E)$ to denote the space of continuous functions from $[0,\infty)$ to $E$.  We  define  $C^1E$ as the space of  continuous functions with locally bounded first order derivative from $[0,\infty)$ to $E$.  Similarly, for $T>0$ we use $C^1_TE$ as the space of  continuous functions with bounded first order derivative from $[0,T]$ to $E$. For $\alpha\in(0,1)$ we  define $C^\alpha_TE$ as the space of $\alpha$-H\"{o}lder continuous functions from $[0,T]$ to $E$, endowed with the norm $\|f\|_{C^\alpha_TE}=\sup_{s,t\in[0,T],s\neq t}\frac{\|f(s)-f(t)\|_E}{|t-s|^\alpha}+\sup_{t\in[0,T]}\|f(t)\|_{E}$. We also write $C_b([0,\infty);E)$ for functions in $C([0,\infty);E)$ such that $\|f\|_{C_{b}([0,\infty);E)}:=\sup_{t\in [0,\infty)}\|f(t)\|_E<\infty$. For $\beta\in (0,1]$ then define $C_b^\beta([0,\infty);E)$ as functions in $C^\beta([0,\infty);E)$ such that $\|f\|_{C_b^\beta([0,\infty);E)}:=\sup_{t\in [0,\infty)}\|f\|_{C^\beta_tE}<\infty$. Similarly we  define $C_{b}(\mR;E)$,  $C_b^{\beta}(\mR;E)$ with $[0,\infty)$ replaced by $\mR$.
  For $p\in [1,\infty]$ we write $L^p_TE=L^p(0,T;E)$ for the space of $L^p$-integrable functions from $[0,T]$ to $E$, equipped with the usual $L^p$-norm. We also use $L^p_{\mathrm{loc}}([0,\infty);E)$ to denote the space of functions $f$ from $[0,\infty)$ to $E$ satisfying $f|_{[0,T]}\in L^p_T E$ for all $ T>0$.
 We use $C_0^\infty$ to denote the set of  $C^\infty$ functions with mean zero.
For $f$ on $\mathbb{T}^2$ we follow the Fourier transform convention $\mathcal{F}f(k)=\hat{f}(k)=\frac1{(2\pi)^2}\int_{\mathbb{T}^2}f(x)e^{-ix\cdot k}\dif x.$ For $s\in\mathbb{R}$,  we set $\dot{H}^s=\{f:\|(-\Delta)^{s/2}f\|_{L^2}<\infty\}$ and equip it with the norm  $\|f\|_{\dot{H}^s}=(\sum_{0\neq k\in\mathbb{Z}^2}|k|^{2s}|\hat{f}(k)|^2)^{1/2}$.  We use $(\Delta_{i})_{i\geq -1}$ to denote the Littlewood--Paley blocks corresponding to a dyadic partition of unity.
Besov spaces on the torus with general indices $\alpha\in \R$, $p,q\in[1,\infty]$ are defined as
the completion of $C^\infty(\mathbb{T}^{d})$ with respect to the norm
$$
\|u\|_{B^\alpha_{p,q}}:=\left(\sum_{j\geq-1}2^{j\alpha q}\|\Delta_ju\|_{L^p}^q\right)^{1/q},$$
with the usual interpretation as the $\ell^\infty$-norm when $q=\infty.$
The H\"{o}lder--Besov space $C^\alpha$ is given by $C^\alpha=B^\alpha_{\infty,\infty}$. Let $s\in\mathbb{R}$. If $f\in B^s_{\infty,1}$ with mean zero, then $f\in \dot{H}^{s}\cap C^s$
and
\begin{align}\label{besov}\|f\|_{\dot{H}^{s}}+\|f\|_{C^{s}}\lesssim \|f\|_{B^s_{\infty,1}}.\end{align}

As in \cite{CKL21} we denote
    $$v \approx w\quad \textrm{if }\quad v=w+\nabla^\perp p$$
    holds for some smooth scalar function $p$.
Define the projection operator for $\lambda\geq1$
$$\widehat{(P_{\leq\lambda}g)}(k)=\psi \left(\frac{k}{\lambda}\right)\hat{g}(k).$$
Here $\psi\in C_c^\infty(\mathbb{R}^2)$ satisfies $\psi(k)=0$ for $|k|\geq1$ and $\psi(k)=1$ for $|k|\leq 1/2$. We also introduce the Riesz-type transform $\mathcal{R}_j^o$, $j=1,2$ as follows
$$\hat{\mathcal{R}}_1^o(k_1,k_2)=\frac{25(k_2^2-k_1^2)}{12|k|^2},\quad \hat{\mathcal{R}}_2^o(k_1,k_2)=\frac{7(k_2^2-k_1^2)}{12|k|^2}+\frac{4k_1k_2}{|k|^2}.$$
We define the  norm
$$\|q\|_X:=\|q\|_{L^\infty}+\sum_{j=1}^2\|\mathcal{R}_j^oq\|_{L^\infty}.$$

\subsection{Probabilistic elements}\label{s:2.2}
Let $\xi$ be a white noise on $\mathbb{T}^2$ with mean zero defined on a probability space $(\Omega,\mathcal{F},\mathbf{P})$. More precisely, $(\hat{\xi}(k))_{k\in\mathbb{Z}^2}$ is a complex-valued centered Gaussian process with covariance
$$\E[\hat{\xi}(k)\hat{\xi}(k')]=\frac1{(2\pi)^2}1_{k=-k'}$$
and such that $\overline{\hat{\xi}(k)}=\hat{\xi}(-k)$ for all $k,k'\in \mathbb{Z}^2$. This yields, using the Gaussian hypercontractivity together with the Besov embedding, that $\E[\|\xi\|_{C^{-1-\kappa}}^p]<\infty$ for all $\kappa>0$, $p\in[1,\infty)$.

\section{Solutions to the initial value problem}
\label{s:1.1}

We are concerned with the SQG equation driven by a fractional derivative of spatial white noise, namely
\begin{align}\label{eq1}
\begin{aligned}
\partial_t \theta + u \cdummy \nabla \theta &= - \Lambda^{\gamma} \theta +
  \Lambda^{\alpha}\xi,\\
  u=\nabla^\perp\Lambda^{-1}\theta&=(-\partial_2\Lambda^{-1}\theta,\partial_1\Lambda^{-1}\theta)=(-\mathcal{R}_2\theta, \mathcal{R}_1\theta),\\
  \theta(0)&=\theta_0,
  \end{aligned}
  \end{align}
where $\gamma\in[0,3/2)$, $\alpha\in [0,1)$ and $\xi$ is a space white noise. For notational simplicity we consider unitary viscosity $\nu=1$ but our results hold for all $\nu\geq0$. The goal of this section is to prove existence of infinitely many solutions to the initial value problem and also existence of infinitely many solutions with prescribed initial as well as terminal condition. This has  consequences regarding the probability laws of solutions at various  times and the so-called coming down from infinity discussed in Section~\ref{s:com}.

We proceed similarly as  in \cite{CKL21}, where existence of infinitely many steady state solutions to the SQG equation with $\xi=0$ was established.
To this end, we let $f = \Lambda^{- 1} \theta$ and observe that the equation solved by $f$ reads as
\begin{equation}\label{eq:f}
\begin{aligned}
 \nabla \cdummy (-\partial_t \mathcal{R}f + \Lambda f \nabla^{\perp} f)& =
   \nabla \cdummy ( \Lambda^{\gamma - 1} \nabla f) + \nabla \cdummy
   (-\mathcal{R} \Lambda^{- 1 + \alpha} \xi),\\
    f(0)&=\Lambda^{-1}\theta_0.
    \end{aligned}
    \end{equation}
   Here $\theta_0\in C^\eta$ for some $\eta>1/2$ with zero mean.
We first assume that there exists deterministic constants $L,N\geq 1$ such that
\begin{equation}\label{eq:u0}
\|\theta_{0}\|_{C^\eta}\leq N,\quad \|\xi\|_{C^{-1-\kappa}}\leq L.
\end{equation}
We keep this additional assumption on the initial condition and the noise throughout the convex integration step in Proposition~\ref{p:iteration} and relax it later in the proof of Theorem~\ref{thm:6.1}.

 Define $M_L=C(N^2+L)$ for a universal constant $C>0$ given below.
    We consider an increasing sequence $\{\lambda_n\}_{n\in\mathbb{N}_{0}}$ which diverges to $\infty$, and a sequence $\{r_n\}_{n\in \mathbb{N}}$  which is decreasing to $0$. We choose $a\in\mathbb{N},$ $ b\in\mathbb{N},$ $  \beta\in (0,1)$ and let
$$\lambda_n=a^{(b^n)}, \quad r_n=M_L\lambda_0^\beta\lambda_n^{-\beta},\quad \ell_n=\lambda^{-1}_n,\quad \mu_n=(\lambda_n\lambda_{n-1})^{1/2},$$
where  $\beta$ will be chosen sufficiently small and $a$  will be chosen sufficiently large.
We first assume that
$$\sum_{m\geq0}r^{1/2}_{m}\leq M^{1/2}_L+M^{1/2}_L\sum_{m\geq1}a^{\beta/2-mb\beta/2}\leq M_L^{1/2}+\frac{M_L^{1/2}}{1-a^{-b\beta/2}}\leq 3M^{1/2}_L,$$
and
$$
\sum_{m\geq1}\lambda_0^{\beta/2}\lambda^{-{\beta}/{4b}}_{m}\leq \lambda_0^{\beta/4}+\lambda_0^{\beta/4}\sum_{m\geq1}a^{\beta/4-mb\beta/4}\leq\lambda_0^{\beta/4}+\frac{\lambda_0^{\beta/4}}{1-a^{-b\beta/4}}\leq 3\lambda_0^{\beta/4},$$
which boils down to
\begin{align}\label{para}a^{b\beta}\geq 16.
\end{align}
More details on the choice of these parameters will be given below in the course of the construction.

The iteration is indexed by a parameter $n\in\mathbb{N}_0$. At each step $n$, a pair $(f_{\leq n},q_n)\in C^\infty_0\times C^\infty_0$ is constructed solving the following system
\begin{align}\label{induction ps}
\begin{aligned}
-\partial_t \mathcal{R}f_{\leqslant n} + \Lambda f_{\leqslant n}
   \nabla^{\perp} f_{\leqslant n} +\mathcal{R} \Lambda^{- 1 + \alpha}
   P_{\leq \lambda_n}\xi &\approx \Lambda^{\gamma - 1} \nabla f_{\leqslant n} +
   \nabla q_n,\\
   f_{\leqslant n}(0)&=P_{\leq \lambda_n/3}\Lambda^{-1}\theta_0.
   \end{aligned}
   \end{align}
We recall that the notation $\approx$ and $P_{\leq \lambda}$ were introduced in Section~\ref{s:not}.
{Taking divergence of} \eqref{induction ps} yields  an approximation of \eqref{eq:f} and  $q_{n}$ represents an error which shall eventually vanish with $n\to\infty$. {Due to the application of divergence, we only need to consider the mean-free parts of all the terms in \eqref{induction ps}. For notational simplicity, we do so without further modifying the notation.}

We observe that the noise as well as the initial condition are added in \eqref{induction ps} scale by scale. This permits to preserve the frequency localization of $f_{\leq n}$, $q_{n}$ as seen in the following iterative proposition.
Its proof is presented in Section~\ref{s:it}.

\begin{proposition}\label{p:iteration}
Let $L,N\geq 1$ and assume \eqref{eq:u0}. There exists a choice of parameters $a, b, \beta$ such that the following holds true: Let $(f_{\leq n},q_{n})$ for some $n\in\N_{0}$ be an $\mathcal{F}$-measurable solution to \eqref{induction ps} such that for any $t\geq 0$ the frequencies of $f_{\leq n}(t),q_n(t)$ are localized in a ball of radius $\leq 6\lambda_n$ and $\leq 12\lambda_n$, respectively, and for any $t\geq 0$
\begin{equation}\label{inductionv ps}
\|f_{\leq n}(t)\|_{B^{1/2}_{\infty,1}}\leq10M_0M_L^{1/2}
\end{equation}
for a constant $M_0$ independent of $a,b,\beta,M_L$, and
\begin{align}\label{eq:C1}
\|f_{\leq n}\|_{C^1_bB^{1/2}_{\infty,1}}\leq M_0M^{1/2}_L\lambda_n,
\end{align}
and for $\delta>\beta/2$
\begin{align}\label{induction wf}\|f_{\leq n}\|_{C_b^1B^{-1/2-\delta}_{\infty,1}}\leq M_L^{1/2}+M_0M^{1/2}_L\sum_{k=1}^n(\lambda_{k}^{-\delta}+\lambda_{k-1}^{{-3/2-\eta}})\leq M_L^{1/2}+6M_0M_L^{1/2},\end{align}
and
\begin{align}\label{eq:R}
\|q_n(t)\|_{X}\leq
r_{n},\quad t\in [2^{-n+1},\infty),
\end{align}
\begin{align}\label{bd:R}
\|q_n(t)\|_{X}\leq M_L+\sum_{k=1}^nr_{k}\leq 3M_L,\quad t\in[0, 2^{-n+1}).
\end{align}
Here we defined $\sum_{1\leq r\leq0}:=0.$
 Then    there exists an $\mathcal{F}$-measurable solution $(f_{\leq n+1},q_{n+1})$ which solves \eqref{induction ps} with $n$ replaced by $n+1$ and such that for any $t\geq 0$ the frequencies of $f_{\leq n+1}(t),q_{n+1}(t)$ are localized in a ball of radius $\leq 6\lambda_{n+1}$ and $\leq 12\lambda_{n+1}$, respectively, and
 \begin{equation}\label{iteration ps}
\|f_{\leq n+1}(t)-f_{\leq n}(t)\|_{B^{1/2}_{\infty,1}}\leq \begin{cases}
M_0 r_{n}^{1/2}+r^{1/2}_{n+1},& t\in [2^{-n+2},\infty),\\
M_0 M_L^{1/2}+r^{1/2}_{n+1},& t\in (2^{-n-1},2^{-n+2}),\\
r^{1/2}_{n+1},
&t\in [0,2^{-n-1}],
\end{cases}
\end{equation}
and for $\vartheta<(1/2+\beta/4b)\wedge \eta$, $t\in (2^{-n+2},\infty)$
\begin{align}\label{induction fn}\|f_{\leq n+1}(t)-f_{\leq n}(t)\|_{C^{\vartheta}}\leq M_0M_L^{1/2}\lambda_0^{\beta/2}\lambda^{-{\beta}/{4b}}_{n+1}+r^{1/2}_{n+1},
\end{align}
and for $\delta>\beta/2$
\begin{align}\label{induction w}
\|f_{\leq n+1}-f_{\leq n}\|_{C_bB^{1/2-\delta}_{\infty,1}}\leq r^{1/2}_{n+1},
\end{align}
and
\begin{equation}\label{iteration R}
\|q_{n+1}(t)\|_{X}\leq\begin{cases}
r_{n+1},& t\in [2^{-n},\infty),\\
r_{n+1}+\sup_{s\in[t-\ell_{n+1},t]}\|q_{n}(s)\|_{X},&t\in [0,2^{-n}).
\end{cases}
\end{equation}
Consequently, $(f_{\leq n+1},q_{n+1})$ obeys \eqref{inductionv ps},  \eqref{eq:C1}, \eqref{induction wf},  \eqref{eq:R} and \eqref{bd:R} at the level $n+1$.
Moreover, for any $\eps>0$ we could choose the parameter $a$ large enough depending on $M_L, C_0$ such that
\begin{align}\label{induction w1}\|f_{\leq n+1}-f_{\leq n}\|_{C_bB^{1/2-\delta}_{\infty,1}}\leq \eps/2^{n+1}.
\end{align}
\end{proposition}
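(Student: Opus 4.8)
\textbf{Proof proposal for Proposition~\ref{p:iteration}.}

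The plan is to carry out a single convex integration step following the scheme of \cite{CKL21}, with the novelty that the perturbation must also absorb the newly added scales of the noise $\mathcal{R}\Lambda^{-1+\alpha}(P_{\leq\lambda_{n+1}}-P_{\leq\lambda_n})\xi$ and of the initial/terminal conditions. First I would mollify the current iterate in time at scale $\ell_{n+1}$ to obtain $f_{\leq n}^{\ell}$ with frequencies still essentially at $\leq 6\lambda_n$, paying a commutator error of the claimed size; the mollification is needed so that the amplitudes defined from $q_n$ are smooth. Then I would set $f_{\leq n+1}=f_{\leq n}^{\ell}+w_{n+1}$ where $w_{n+1}=\sum_{l}a_{l,n+1}(t,x)\cos(\lambda_{n+1}\,l\cdot x)$ is the principal perturbation, with the amplitudes $a_{l,n+1}$ built from $q_n$ (and a geometric-type identity for the quadratic term $\Lambda f\nabla^\perp f$) so that the low-frequency part of $\Lambda w_{n+1}\nabla^\perp w_{n+1}$ cancels $\nabla q_n$ up to a term of order $r_{n+1}$; the Riesz-type operators $\mathcal{R}^o_j$ and the norm $\|\cdot\|_X$ enter precisely here, as in \cite[Section~3--4]{CKL21}. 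The time cutoffs in \eqref{eq:R}--\eqref{bd:R} and in \eqref{iteration ps} are handled by multiplying $a_{l,n+1}$ by a smooth temporal cutoff supported where $\|q_n(t)\|_X$ is already under control, which is why the bounds degrade to the $M_L^{1/2}$ regime on the shrinking interval $(2^{-n-1},2^{-n+2})$.

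Next I would record the estimates on $w_{n+1}$: since $\|a_{l,n+1}\|$ is comparable to $r_n^{1/2}$ (or $M_L^{1/2}$ on the bad interval) and $a_{l,n+1}$ oscillates at frequency $\mu_n\ll\lambda_{n+1}$, stationary-phase/Fourier-support arguments give $\|w_{n+1}\|_{B^{1/2}_{\infty,1}}\lesssim r_n^{1/2}$, $\|w_{n+1}\|_{C^\vartheta}\lesssim \lambda_0^{\beta/2}\lambda_{n+1}^{-\beta/4b}$ for $\vartheta<(1/2+\beta/4b)\wedge\eta$ (the loss $\lambda_{n+1}^{-\beta/4b}$ coming from interpolating the $B^{1/2}_{\infty,1}$ bound against an $L^\infty$ bound on $a_{l,n+1}$ and using $\mu_n=(\lambda_n\lambda_{n-1})^{1/2}$), $\|w_{n+1}\|_{C_bB^{1/2-\delta}_{\infty,1}}\lesssim \lambda_{n+1}^{-\delta}\cdot r_n^{1/2}\ll r_{n+1}^{1/2}$ for $a$ large, and the $C^1_t$ bounds from $\partial_t a_{l,n+1}$ together with the frequency factor $\lambda_{n+1}$. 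The added noise increment contributes $\|\mathcal{R}\Lambda^{-1+\alpha}(P_{\leq\lambda_{n+1}}-P_{\leq\lambda_n})\xi\|$ in a suitable negative Hölder space, which by \eqref{eq:u0} is bounded by $L\lambda_n^{-\kapp'}$ for some $\kappa'>0$; since $\alpha<1$ the operator $\Lambda^{-1+\alpha}$ gains regularity, so this is $\lesssim r_{n+1}$, hence it can be folded into the new error $q_{n+1}$ (possibly after solving a small div-curl problem to put it in gradient-plus-$\mathcal{R}^o$ form). The added initial-condition increment $P_{\leq\lambda_{n+1}/3}\Lambda^{-1}\theta_0-P_{\leq\lambda_n/3}\Lambda^{-1}\theta_0$ is controlled by $N\lambda_n^{-1-\eta}$ using $\theta_0\in C^\eta$, $\eta>1/2$, which explains the term $\lambda_{k-1}^{-3/2-\eta}$ in \eqref{induction wf}; one adjusts $w_{n+1}$ near $t=0$ (smoothly cut off in time) to match the prescribed data exactly.

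I would then define $q_{n+1}$ as the sum of: the high-frequency self-interaction $\Lambda w_{n+1}\nabla^\perp w_{n+1}$ minus its designed low-frequency cancelling part, the cross terms $\Lambda f_{\leq n}^\ell\nabla^\perp w_{n+1}+\Lambda w_{n+1}\nabla^\perp f_{\leq n}^\ell$ (using $\mu_n\ll\lambda_{n+1}$ and the frequency gap to $6\lambda_n$ to gain smallness), the time-mollification commutator, the $\Lambda^{\gamma-1}\nabla w_{n+1}$ term (harmless since $\gamma<3/2$ and it is treated as lower order — here one genuinely uses that $\gamma<3/2$ so $\Lambda^{\gamma-1}$ costs at most $\lambda_{n+1}^{\gamma-1}$ against the gain from oscillation), the noise increment and the initial-data increment. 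A standard but careful bookkeeping — the same inverse-divergence operator $\mathcal{R}^o$ as in \cite{CKL21} applied to each piece — shows $\|q_{n+1}(t)\|_X\leq r_{n+1}$ on $[2^{-n},\infty)$ and the telescoping bound on $[0,2^{-n})$, giving \eqref{iteration R}; the frequency localization of $q_{n+1}$ in a ball of radius $\leq 12\lambda_{n+1}$ follows since every constituent term is a product of functions with frequencies $\lesssim\lambda_{n+1}$. Finally, choosing $\beta$ small and $a$ large (depending on $M_L$, $C_0$ and, for \eqref{induction w1}, on $\varepsilon$) closes \eqref{inductionv ps}, \eqref{eq:C1}, \eqref{induction wf} at level $n+1$ via the geometric series bounds \eqref{para}.

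The main obstacle is the cancellation design on the bad time interval $(2^{-n-1},2^{-n+2})$: there $\|q_n\|_X$ is only bounded by $3M_L$ rather than $r_n$, so the perturbation has amplitude $\sim M_L^{1/2}$ and one must still keep the $B^{1/2}_{\infty,1}$-norm of $f_{\leq n+1}$ below $10M_0M_L^{1/2}$ uniformly while ensuring $q_{n+1}$ drops back to $r_{n+1}$ outside a slightly smaller interval — this requires the temporal cutoffs, the $r_{n+1}+\sup_{s\in[t-\ell_{n+1},t]}\|q_n(s)\|_X$ structure in \eqref{iteration R}, and a delicate choice of where the new scale of noise and data is switched on, so that the various geometric sums over $n$ remain summable. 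The secondary difficulty is verifying that the low-regularity noise increment, after the gain from $\Lambda^{-1+\alpha}$ with $\alpha<1$ and $\kappa>0$, is genuinely of size $\lesssim r_{n+1}=M_L\lambda_0^\beta\lambda_{n+1}^{-\beta}$, which constrains how small $\beta$ must be taken relative to $\kappa$.
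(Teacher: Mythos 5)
Your proposal follows essentially the same route as the paper's proof: one-sided time mollification at scale $\ell_{n+1}$, the \cite{CKL21}-type building blocks $a_{j,n+1}\cos(5\lambda_{n+1}l_j\cdot x)$ with amplitudes built from $\sqrt{C_0+\mathcal{R}^o_j q_\ell/\tilde\chi}$ and two temporal cutoffs ($\tilde\chi$ governing the amplitude size, $\chi$ switching the perturbation off near $t=0$), the noise and initial data added scale by scale with the bounds $L\lambda_n^{-(1-\alpha-\kappa)}$ and $N\lambda_n^{-1}$ respectively, and the decomposition of $q_{n+1}$ into oscillation, transport, dissipative, commutator, noise and data parts. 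The only item missing from your list of error terms is $-\partial_t\mathcal{R}f_{n+1}$ (the paper's $q_I$ together with the extra term $\mathcal{R}\tilde f_{n+1}\partial_t\chi$ on the transition interval), but since you already control $\partial_t a_{j,n+1}$ and the inverse divergence supplies the factor $\lambda_{n+1}^{-1}$, this does not affect the validity of the argument.
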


  Now, we start the iteration by letting $f_{\leq 0}=P_{\leq \lambda_0/3}\Lambda^{-1}\theta_0$. If $\theta_0\in C^\eta$ then we obtain from \eqref{induction ps}
  $$\nabla q_0 \thickapprox\Lambda f_{\leqslant 0}
   \nabla^{\perp} f_{\leqslant 0} +\mathcal{R} \Lambda^{- 1 + \alpha}
   P_{\leq \lambda_0}\xi -\Lambda^{\gamma - 1} \nabla f_{\leq0} ,$$
   which implies (taking $q_{0}$ mean-free)
  $$\|q_0\|_X\lesssim \|\theta_0\|_{C^\eta}^2+\|\theta_0\|_{C^\eta}+L\leq C(N^2+N+L)=M_L.$$
  Moreover,
  $$\|f_{\leq 0}\|_{B^{1/2}_{\infty,1}}\leq CN.$$
  Consequently, $(f_{\leq0},q_{0})$ obeys \eqref{inductionv ps},  \eqref{eq:C1}, \eqref{induction wf}, \eqref{eq:R} and \eqref{bd:R} at the level $0$.

  Let us now state the first main result of this section which is based on  Proposition \ref{p:iteration}. The proof is presented in Section~\ref{s:thm}.

  \bt\label{thm:6.1}
Let $\delta>0$ be arbitrary. For any  $\mathcal{F}$-measurable initial condition $\theta_0\in C^\eta$, $\eta>1/2$ with zero mean $\mathbf{P}$-a.s.
there exists an $\mathcal{F}$-measurable analytically weak solution $\theta$ to \eqref{eq1} with $\theta(0)=\theta_0$ which belongs to  $$L^p_{\rm{loc}}(0,\infty;B_{\infty,1}^{-1/2})\cap C_{b}([0,\infty),B_{\infty,1}^{-1/2-\delta})\cap C^1_{b}([0,\infty),B^{-3/2-\delta}_{\infty,1})\quad \mathbf{P}\text{-a.s. for all } p\in[1,\infty).$$
Moreover, for any $\varepsilon>0$ we can find an $\mathcal{F}$-measurable analytically weak solution $\theta$ such that
\begin{equation}\label{eq:32}
\|\theta\|_{C_b([0,\infty),B^{-1/2-\delta}_{\infty,1})}\leq \|\theta_0\|_{B^{-1/2-\delta}_{\infty,1}}+\varepsilon,\quad
\end{equation}
Furthermore, there exists $m\in\mathbb{N}$ and $\vartheta>1/2$ so that for
\begin{equation}\label{est:fn}
\mathbf{E}\left[\|\theta\|_{C_b([4,\infty),B^{\vartheta-1}_{\infty,1})}+\|\theta\|_{C^1_b([0,\infty),B^{-3/2-\delta}_{\infty,1})}\right]\lesssim 1+\mathbf{E}[\|\theta_0\|_{C^\eta}^m].
\end{equation}
If $\|\theta_0\|_{B^{-1/2-\delta}_{\infty,1}}$ is bounded $\mathbf{P}$-a.s. or if $\theta_{0}$ is non-Gaussian, then the law of the solution $\theta(t)$ is non-Gaussian for any $t\geq0$. There are infinitely many such solutions $\theta$.
\et

The construction of non-Gaussian solutions in case of Gaussian initial conditions is more involved and is carried out in Section~\ref{s:gaus} below.

\subsection{Proof of Proposition~\ref{p:iteration}}
\label{s:it}

\subsubsection{Choice of parameters.}\label{s:p}
In the sequel, we use the following bounds
$$\beta<3/2-\gamma,\quad \frac{1}{b}+\beta<\frac12,\quad (1-\alpha-\kappa)\wedge(\eta-1/2)\wedge (2-\gamma)>b \beta.$$
We can choose $b=4$ so that all the conditions  are satisfied if $\beta$ is small enough. Moreover, we can choose $a$ large enough to absorb implicit constant  and such that \eqref{para} holds.
\subsubsection{Mollification}We intend to replace $f_{\leq n}$ by a mollified one. Let $\{\varphi_\varepsilon\}_{\varepsilon>0}$ be a family of  standard mollifiers with support of $\varphi$ in $(0,1)$. The one sided mollifier here is used in order to preserve initial data. We extend $f_{\leq n}, q_n$ to $t<0$ by taking values at $t=0$. The equation also holds for $t<0$ as $\partial_t f_{\leq n}(0)=0$ by our construction. We define a mollification of $f_{\leq n}$, $q_n$  in  time by convolution as follows
	$$f_{\ell}=f_{\leq n}*_t\varphi_{\ell_{n+1}},\qquad
	q_\ell=q_n*_t\varphi_{\ell_{n+1}},$$
	where  $\ell:=\ell_{n+1}=\lambda_{n+1}^{-1}$.
We apply mollification on  both sides of \eqref{induction ps} and obtain
\begin{align}\label{mol}
\begin{aligned}
- \partial_t \mathcal{R}f_{\ell} + \Lambda f_{\ell} \nabla^{\perp}
   f_{\ell} +\mathcal{R} \Lambda^{- 1 + \alpha}P_{\leq\lambda_{n}} \xi
   &\approx  \Lambda^{\gamma - 1} \nabla f_{\ell} + \nabla q_{\ell} +
   R_{\tmop{com}} ,\\
   f_\ell(0)&=P_{\leq\lambda_n/3}\Lambda^{-1}\theta_0.
   \end{aligned}
   \end{align}
   Here $R_{\tmop{com}}=\Lambda f_{\ell} \nabla^{\perp}
   f_{\ell}-(\Lambda f_{\leq n}\nabla^\bot f_{\leq n})*_t\varphi_{\ell}$.
  In view of \eqref{eq:C1} we obtain for $b(1-\beta/2)-1/2>\vartheta>1/2$
 \begin{align}\label{mol1}
  \begin{aligned}
\|f_{\ell}-f_{\leq n}\|_{C_bC^{\vartheta}}&\lesssim \ell\lambda_n^{\vartheta-1/2}\|f_{\leq n}\|_{C^1_bB_{\infty,1}^{1/2}}
  \\&\lesssim M_0\lambda_{n+1}^{-1} M^{1/2}_L\lambda^{1/2+\vartheta}_n\leq \frac14r^{1/2}_{n+1},
  \end{aligned}
  \end{align}
  and
  \begin{align}\label{mol2}
\|f_\ell\|_{C^1_bB_{\infty,1}^{1/2}}\leq \|f_{\leq n}\|_{C^1_bB_{\infty,1}^{1/2}}\leq M_0M^{1/2}_L\lambda_n,
  \end{align}
and
for $\delta>\beta/2$ by \eqref{induction wf}
\begin{align}
\label{mol3}
\|f_{\ell}\|_{C^1_{b}B^{-1/2-\delta}_{\infty,1}}\leq\|f_{\leq n}\|_{C^1_{b}B^{-1/2-\delta}_{\infty,1}}\leq M_L^{1/2}+M_0M^{1/2}_L\sum_{k=1}^n(\lambda_{k}^{-\delta}+\lambda_{k-1}^{-3/2-\eta}).
\end{align}

\subsubsection{Construction of $f_{\leq n+1}$}\label{sec:conf}

We first introduce a smooth cut-off function satisfying
\begin{align*}
\tilde{\chi}(t)=\begin{cases}
r_n,&t\geq 2^{-n+2},\\
\in (r_n,3M_L),& t\in (3\cdot2^{-n},2^{-n+2} ),\\
3M_L,&t\leq 3\cdot2^{-n}.
\end{cases}
\end{align*}
Here in the middle interval we  smoothly interpolate such that
$\|\tilde{\chi}'\|_{C^{0}}\leq 3M_L2^{n}$ and in view of \eqref{eq:R}, \eqref{bd:R} and since $\supp\varphi_\ell\subset[0,\ell],$ $ \ell<2^{-n}$, it holds $\|q_\ell\|_X\leq\tilde{\chi}$.
Now, similarly to \cite[(2.10)]{CKL21} we define
$$
\tilde{f}_{n+1}(t,x)=\sum_{j=1}^2a_{j,n+1}(t,x)\cos (5\lambda_{n+1}l_j\cdot x), \quad a_{j,n+1}=2\sqrt{\frac{\tilde{\chi}}{5\lambda_{n+1}}}P_{\leq \mu_{n+1}}\sqrt{C_0+\mathcal{R}_j^o\frac{q_{\ell}}{\tilde{\chi}}},
$$
where $l_1=(\frac35,\frac45)^T$, $l_2=(1,0)^T$.
As $L,N$  as well as the parameters $a,b,\beta$  are deterministic and $q_\ell$ is $\mathcal{F}$-measurable,   $\tilde{f}_{n+1}$ is $\mathcal{F}$-measurable.

\begin{remark}\label{r:1}
We note that since $\mu_{n+1}$ is much smaller than $\lambda_{n+1}$, the spatial frequencies of $\tilde{f}_{n+1}$ are localized to $\lambda_{n+1}$. Its Fourier coefficients take the form for $k\in \mathbb{Z}^{2}$
\[
\begin{aligned}
\widehat{\tilde{f}_{n + 1}} (k)
 &=\begin{cases}
\frac1{(2\pi)^2}\sum_{j = 1}^2 \sqrt{\frac{\tilde{\chi}}{5 \lambda_{n + 1}}} \left\langle
   \sqrt{C_0 +\mathcal{R}_j^o \frac{q_{\ell}}{\tilde{\chi}}}, P_{\leq\mu_{n+1}}e^{-i (k+5 \lambda_{n + 1}
   l_j ) \cdummy x} \right\rangle , &\text{if } | k+5 \lambda_{n + 1} l_j  | \leqslant \mu_{n + 1}, \\
\frac1{(2\pi)^2} \sum_{j = 1}^2\sqrt{\frac{\tilde{\chi}}{5 \lambda_{n + 1}}} \left\langle
   \sqrt{C_0 +\mathcal{R}_j^o \frac{q_{\ell}}{\tilde{\chi}}}, P_{\leq\mu_{n+1}} e^{-i (k - 5 \lambda_{n +
   1} l_j) \cdummy x} \right\rangle , &\text{if } | k - 5 \lambda_{n + 1} l_j | \leqslant \mu_{n + 1},\\
 0, &\text{otherwise}.
   \end{cases}
   \end{aligned}
   \]
This is what we employ frequently throughout the paper.
\end{remark}

We also introduce the following cut-off function
\begin{align*}
\chi(t)=\begin{cases}
0,& t\leq 2^{-n-1},\\
\in (0,1),& t\in (2^{-n-1},2^{-n} ),\\
1,&t\geq 2^{-n}.
\end{cases}
\end{align*}
Here in the middle interval we smoothly interpolate so that it holds
$\|\chi'\|_{C^{0}}\leq 2^{n+1}$. Now, define $$f_{n+1}=\chi\tilde{f}_{n+1}+f_{n+1}^{in}:=\chi\tilde{f}_{n+1}+(P_{\leq \lambda_{n+1}/3}-P_{\leq \lambda_n/3})\Lambda^{-1}\theta_0,$$
which is mean zero and $\mathcal{F}$-measurable. We first have for $1/2<\vartheta<\eta$
\begin{align}\label{ini}\|f_{n+1}^{in}\|_{B_{\infty,1}^{1/2}}\lesssim \|f_{n+1}^{in}\|_{C^\vartheta}\lesssim\lambda_n^{-1}\|\theta_0\|_{C^\eta}\leq \frac12r^{1/2}_{n+1},
\end{align}
and
\begin{align}\label{ini1}\|f_{n+1}^{in}\|_{B_{\infty,1}^{-1/2-\delta}}\lesssim \lambda_n^{{-3/2-\eta}}\|\theta_0\|_{C^\eta}\leq M_L^{1/2}\lambda_n^{{-3/2-\eta}},
\end{align}
where we used $\beta b<2$.
In view of the definition of $\tilde{f}_{n+1}$, we obtain
\begin{align}\label{fn}
\|\tilde{f}_{n+1}(t)\|_{B_{\infty,1}^{1/2}}&= \sum_j2^{j/2}\|\Delta_j\tilde{f}_{n+1}\|_{L^\infty}\lesssim\sum_{j:2^j\thicksim \lambda_{n+1}}2^{j/2}\|\Delta_j\tilde{f}_{n+1}\|_{L^\infty}\lesssim \lambda^{1/2}_{n+1}\|\tilde f_{n+1}\|_{L^{\infty}}\nonumber
\\&\lesssim \lambda_{n+1}^{1/2}\left((C_0+1)\frac{\tilde\chi}{5\lambda_{n+1}}\right)^{1/2}\leq
\begin{cases}
M_0r_n^{1/2},& t\in[2^{-n+2},\infty),\\
 M_{0}M_{L}^{1/2},&t\in [0,2^{-n+2}).
 \end{cases}
\end{align}
Here we used the following argument in the third step: By $2^j\sim \lambda_{n+1}$ there exists $c_{1},c_{2}>0$ such that $c_{1}\lambda_{n+1}\leq 2^j\leq c_{2}\lambda_{n+1}$. Hence $\log_{2} c_{1}+\log_{2} \lambda_{n+1}\leq j\leq\log_{2} c_{2}+\log_{2} \lambda_{n+1}$ and $$\sum_{j:2^j\sim \lambda_{n+1}}1\lesssim 1.$$
For the $C^\vartheta$-estimate we use $r_n=M_L\lambda_0^\beta \lambda_n^{-\beta}$ to  deduce
\begin{align}
\|\tilde{f}_{n+1}(t)\|_{C^\vartheta}&= \sup_j2^{j\vartheta}\|\Delta_j\tilde{f}_{n+1}(t)\|_{L^\infty}\lesssim\sup_{j:2^j\thicksim \lambda_{n+1}}2^{j\vartheta}\|\Delta_j\tilde{f}_{n+1}(t)\|_{L^\infty}\lesssim \lambda^{\vartheta}_{n+1}\|\tilde f_{n+1}(t)\|_{L^{\infty}}\nonumber
\no\\&\lesssim \lambda_{n+1}^{\vartheta}\left((C_0+1)\frac{\tilde\chi(t)}{5\lambda_{n+1}}\right)^{1/2}\leq
\begin{cases}
M_0M_{L}^{1/2}\lambda_0^{\beta/2}\lambda_{n+1}^{\vartheta-1/2-{\beta}/{2b}},& t\in[2^{-n+2},\infty),\\
 M_{0}M_{L}^{1/2}\lambda_{n+1}^{\vartheta-1/2},&t\in [0,2^{-n+2}).
 \end{cases}\label{est:fn1}
\end{align}
Furthermore, for $t\geq0$ we obtain
\begin{align}\label{fn3}
\|\tilde{f}_{n+1}(t)\|_{B_{\infty,1}^{1/2-\delta}}&= \sum_j2^{j(1/2-\delta)}\|\Delta_j\tilde{f}_{n+1}(t)\|_{L^\infty}
\\&\lesssim \lambda_{n+1}^{1/2-\delta}\left((C_0+1)\frac{M_L}{5\lambda_{n+1}}\right)^{1/2}\leq \frac14r^{1/2}_{n+1}.\nonumber
\end{align}
Here we used $\delta>\beta/2$ and we chose $a$ sufficiently large depending on $C_{0}$ to absorb the constant.

 The new solution $f_{\leq n+1}$ is defined as
$$f_{\leq n+1}:=f_\ell+f_{n+1},$$
which is also mean zero and $\mathcal{F}$-measurable.

\subsubsection{Inductive estimates for $f_{\leq n+1}$}
From the construction we see that $\supp\widehat{f_{\leq n+1}}\subset \{|k|\leq 6\lambda_{n+1}\}$ and $f_{\leq n+1}(0)=P_{\leq \lambda_{n+1}/3}\Lambda^{-1}\theta_0$. We first prove \eqref{iteration ps} and \eqref{induction fn}. Combining \eqref{ini}, \eqref{mol1}, \eqref{fn} and \eqref{est:fn1} we obtain for $t\in[2^{-n+2},\infty)$
$$\|f_{\leq n+1}(t)-f_{\leq n}(t)\|_{B_{\infty,1}^{1/2}}\leq \|f_\ell-f_{\leq n}\|_{C_tB_{\infty,1}^{1/2}}+\|f_{n+1}(t)\|_{B_{\infty,1}^{1/2}}\leq M_0r_n^{1/2}+r^{1/2}_{n+1},$$
and for $\vartheta<(\frac12+\frac{\beta}{4b})\wedge \eta$
$$\|f_{\leq n+1}(t)-f_{\leq n}(t)\|_{C^{\vartheta}}\leq \|f_\ell-f_{\leq n}\|_{C_tC^\vartheta}+\|f_{n+1}(t)\|_{C^\vartheta}\leq M_0M_L^{1/2}\lambda_{0}^{\beta/2}\lambda^{-\beta/4b}_{n+1}+r^{1/2}_{n+1},$$
which implies \eqref{induction fn}.
Also combining \eqref{ini} and \eqref{mol1} and \eqref{fn} we obtain for $t\in(2^{-n-1},2^{-n+2})$
$$\|f_{\leq n+1}(t)-f_{\leq n}(t)\|_{B_{\infty,1}^{1/2}}\leq \|f_\ell-f_{\leq n}\|_{C_tB_{\infty,1}^{1/2}}+\|f_{n+1}(t)\|_{B_{\infty,1}^{1/2}}\leq M_0M_L^{1/2}+r^{1/2}_{n+1}.$$
Also combining \eqref{ini} and \eqref{mol1}  we obtain for $t\in[0, 2^{-n-1}]$
$$\|f_{\leq n+1}(t)-f_{\leq n}(t)\|_{B_{\infty,1}^{1/2}}\leq \|f_\ell-f_{\leq n}\|_{C_tB_{\infty,1}^{1/2}}+\|f_{n+1}(t)\|_{B_{\infty,1}^{1/2}}\leq r^{1/2}_{n+1}.$$
 Thus \eqref{iteration ps} holds. Moreover, we have for $t\geq0$
\begin{align*}
\|f_{\leq n+1}(t)\|_{B_{\infty,1}^{1/2}}&\leq N+\sum_{k=0}^n\|f_{\leq k+1}(t)-f_{\leq k}(t)\|_{B_{\infty,1}^{1/2}}
\\&\leq N+\sum_{k=0}^n(M_0r_k^{1/2}+r^{1/2}_{k+1})+\sum_{k=0}^nM_0M_L^{1/2}1_{t\in (2^{-k-1},2^{-k+2}]}
\\&\leq N+\sum_{k=0}^n(M_0r_k^{1/2}+r^{1/2}_{k+1})+3M_0M_L^{1/2}.
\end{align*}
Thus by \eqref{para}, \eqref{inductionv ps} follows for $f_{\leq n+1}$. \eqref{induction w} and \eqref{induction w1} follow from \eqref{fn3}, \eqref{ini} and \eqref{mol1}. Now, we estimate the $C^1_bB^{1/2}_{\infty,1}$ norm. With \eqref{inductionv ps} at hand, we shall estimate the time derivative. We use the definition of $\tilde\chi$ and \eqref{eq:R}, \eqref{bd:R} to obtain
\begin{align}\label{fn2}
	\|\partial_{t}\tilde{f}_{n+1}\|_{C_bB_{\infty,1}^{1/2}}&\leq \sup_{t\in[0,\infty)}\sum_{j:2^j\sim \lambda_{n+1}}2^{j/2}\|\Delta_j\partial_t\tilde{f}_{n+1}\|_{L^\infty}
	\\&\lesssim \lambda_{n+1}^{1/2}\left[\left(\frac{M_L}{5\lambda_{n+1}}\right)^{1/2}(\ell^{-1}+3M_L2^nr_n^{-1})+3M_L2^n(C_{0}+1)^{
	{1/2}}(\lambda_{n+1}r_n)^{-1/2}\right]\nonumber
	\\&\leq (M_0-2)M^{1/2}_L\lambda_{n+1}.\nonumber
\end{align}
Here we used that $2^n\lambda_n^\beta\leq \lambda_{n+1}$. The time derivative of $\chi$ could be bounded by $2^{n+1}\leq\lambda_{n+1}$. Hence  \eqref{eq:C1} holds at level of $n+1$. Similarly we have
$$\|\chi\tilde{f}_{n+1}\|_{C^1_bB_{\infty,1}^{-1/2-\delta}}\leq M_0M_L^{1/2}\lambda_{n+1}^{-\delta},$$
Thus \eqref{induction wf} follows from \eqref{ini1} and \eqref{mol3} at the level of $n+1$.

\subsubsection{Inductive estimate for $q_{n+1}$}\label{s:estq}
Subtracting from \eqref{induction ps} at level $n+1$ the system \eqref{mol}, we obtain
\begin{align*}
\nabla q_{n+1}\approx& \underbrace{\chi^2\Lambda \tilde{f}_{n+1}\nabla^\bot \tilde{f}_{n+1}+\nabla q_\ell}_{\nabla q_M}+\underbrace{\Lambda f_{n+1}\nabla^\bot f_{\ell}+\Lambda f_{\ell}\nabla^\bot f_{ n+1}}_{\nabla q_T}\underbrace{- \nabla \Lambda^{\gamma-1}f_{n+1}}_{\nabla q_D}
	\\&\underbrace{-\partial_t\mathcal{R}f_{n+1}+\Lambda(\chi\tilde{f}_{n+1})\nabla^\perp f_{n+1}^{in}+\nabla^\perp(\chi\tilde{f}_{n+1})\Lambda f_{n+1}^{in}+\Lambda f_{n+1}^{in}\nabla^\perp f_{n+1}^{in}}_{\nabla q_I}
	\\&+R_{\rm{com}}\underbrace{-\mathcal{R} \Lambda^{- 1 + \alpha} (P_{\leq\lambda_{n}}\xi - P_{\leq\lambda_{n+1}}\xi) }_{\nabla q_N}.
\end{align*}
As the left hand side is mean free, the {right} hand side is also mean free. Then for each term on the right hand side we could subtract its mean part and in the following we do not change the notation for simplicity. For the mean free part we could define the inverse of $\nabla$ by $\Delta^{-1}\nabla\cdot$.

As a consequence of the above  formula, we deduce that  $\supp\hat{q}_{n+1}\subset\{|k|\leq 12\lambda_{n+1}\}.$ First, we estimate each term  for $t\in [2^{-n},\infty)$. We follow the same arguments as  in the proof of Proposition~3.1 in \cite{CKL21} with $r_n$ replaced by $\tilde\chi$  to have
$$\|q_M\|_X\lesssim \log \mu_{n+1}(\mu_{n+1}^{-1}\lambda_n)^2\tilde{\chi}+\log \mu_{n+1}(\mu_{n+1}\lambda_{n+1}^{-1})^2\tilde{\chi}+\lambda_{n}\lambda_{n+1}^{-1}\tilde{\chi}\leq\frac16r_{n+1},
$$
where we used that $1>1/b+\beta$. Here and in the following the implicit constants  depend on $M_0$ and we choose $a$ large enough to absorb them. Now we consider $q_T$ and have
$$\nabla q_T=\chi\Lambda \tilde{f}_{n+1}\nabla^\bot f_{{\ell}}+\chi\Lambda f_{{\ell}}\nabla^\bot \tilde{f}_{ n+1}+\Lambda f^{in}_{n+1}\nabla^\bot f_{{\ell}}+\Lambda f_{{\ell}}\nabla^\bot f^{in}_{ n+1}.$$
Similarly as \cite{CKL21} and using the definition of $\tilde{f}_{n+1}$ and \eqref{inductionv ps} we have for the first two terms
\begin{align*}
	&\Big\|\Delta^{-1}\nabla\cdot\Big(\chi\Lambda \tilde{f}_{n+1}\nabla^\bot f_{{\ell}}+\chi\Lambda f_{{\ell}}\nabla^\bot \tilde{f}_{ n+1}\Big)\Big\|_X
	\\&\leq\log\lambda_{n+1} \|\tilde f_{n+1}\|_{L^\infty}(\|\Lambda f_{{\ell}}\|_{L^\infty}+\|\nabla^\bot f_{{\ell}}\|_{L^\infty})\lesssim \log\lambda_{n+1}M^{1/2}_L\lambda_n^{1/2}(\tilde{\chi}\lambda_{n+1}^{-1})^{1/2}<\frac1{12}r_{n+1}.
\end{align*}
Here we used $1/2>1/({2b})+\beta.$
Using \eqref{inductionv ps}, \cite[Lemma 3.2]{CKL21} and $\|\theta_0\|_{C^\eta}\leq N$, we have for the last two terms
$$
\Big\|\Delta^{-1}\nabla\cdot\Big(\Lambda f^{in}_{n+1}\nabla^\bot f_{\ell}+\Lambda f_{\ell}\nabla^\bot f^{in}_{ n+1}\Big)\Big\|_X\lesssim N\lambda_n^{-\eta}M^{1/2}_L\lambda_n^{1/2}\log \lambda_{n+1}<\frac1{12}r_{n+1},$$
provided that $\eta-1/2> b\beta$. Here, we also understand these two terms as their mean free part only.

For $q_D$ we  use the support of the Fourier transform of $\tilde f_{n+1}$ and $f_{n+1}^{in}$ contained in an annulus and \eqref{fn} to have
$$
\|q_D\|_X\lesssim \lambda_{n+1}^{\gamma-1}\sqrt{M_L\lambda_{n+1}^{-1}}+\lambda_n^{\gamma-2}N<\frac16r_{n+1}.$$
Here we used $\beta<3/2-\gamma$ and $2-\gamma>b\beta$.
For $q_N$ we have
\begin{align*}
	\| q_N\|_{X} \lesssim L\lambda^{-1 +\alpha+\kappa}_n<\frac16 r_{n+1}, \end{align*}
provided that $1-\alpha-\kappa>\beta b$.
It remains to estimate $q_{I}$. As the initial data part is independent of time, the first term in $q_I$ is bounded as in \eqref{fn2} by
$$
\begin{aligned}
	&\|\Delta^{-1}\nabla \cdot \partial_t\mathcal{R} \tilde f_{n+1}\|_{X}\\
	&\qquad\lesssim  \frac{\log\lambda_{n+1}} {\lambda_{n+1}} \left[\left(\frac{M_L}{5\lambda_{n+1}}\right)^{1/2}(\ell^{-1}+3M_L2^nr_n^{-1})+3M_L2^n(C_{0}+1)^{{1/2}}(\lambda_{n+1}r_n)^{-1/2}\right]\\
	&\qquad<\frac1{24}r_{n+1}.
\end{aligned}
$$
Using \eqref{fn} and the fact that the Fourier support of $\Lambda(\chi\tilde{f}_{n+1})\nabla^\perp f_{n+1}^{in}+\nabla^\perp(\chi\tilde{f}_{n+1})\Lambda f_{n+1}^{in}$ is contained in an annulus of radius $\lambda_{n+1}$, and applying the regularity of the initial condition in the estimate of   $\Lambda f_{n+1}^{in}\nabla^\perp f_{n+1}^{in}$ (i.e. its mean free part),  the other terms are bounded by
$$
\log \lambda_{n+1}N\sqrt{\frac{{M_L}}{\lambda_{n+1}}}+N^2\lambda_n^{-2\eta}{\log \lambda_{n+1}}<\frac1{12}r_{n+1},$$
provided that ${\eta>b \beta/2}$.
Moreover, using \eqref{inductionv ps}, \eqref{eq:C1} we get
\begin{align*}
\|R_{\rm{com}}\|_{X}&\lesssim \log \lambda_{n}\ell_{n+1}\|\Lambda f_{\leq n}\nabla^\bot f_{\leq n}\|_{C^1L^\infty}+\log \lambda_{n}\ell_{n+1}\|f_{\leq n}\|_{CC^1}\|f_{\leq n}\|_{C^1C^1}
	\\&\lesssim\log \lambda_{n} \ell_{n+1}\lambda^2_nM_L\leq\frac16 r_{n+1},
\end{align*}
which means the corresponding term in $q_{n+1}$ could be controlled by Poincar\'e's inequality.
Here we used that $1/b+\beta/2<1/2$.

For $t\in [2^{-n-1},2^{-n} ]$ we have the extra term $\mathcal{R}\tilde{f}_{n+1}\partial_t\chi$ and
$q_{n+1}^1=(1-\chi^2)q_\ell.$
Then we have
$$\|q_{n+1}^1(t)\|_X\leq\sup_{s\in[t-\ell,t]}\|q_n(s)\|_X,$$
as well as
$$\|\Delta^{-1}\nabla \cdot \mathcal{R} \tilde f_{n+1}\partial_t\chi\|_{{X}}\lesssim 2^{n+1}\lambda_{n+1}^{-3/2}M_L^{1/2}\log\lambda_{n+1}<\frac1{24}r_{n+1}.$$
Moreover on $t\in[0,2^{-n-1})$ we have
$$\nabla q_{n+1}\approx\nabla q_\ell+R_{\rm{com}}-\nabla \Lambda^{\gamma-1}f^{in}_{n+1}+\nabla q_N+\Lambda f^{in}_{n+1}\nabla^\bot f_{{\ell}}+\Lambda f_{{\ell}}\nabla^\bot f^{in}_{ n+1}+\Lambda f_{n+1}^{in}\nabla^\perp f_{n+1}^{in}.$$
Except for the first term, the other ones could be bounded by the previous argument. Thus we deduce
$$\|q_{n+1}(t)\|_X\leq r_{n+1}+\sup_{s\in[t-\ell,t]}\|q_n(s)\|_X.$$
Hence \eqref{iteration R}, \eqref{eq:R} and  \eqref{bd:R} hold at level of $n+1$ and the proof of Proposition~\ref{p:iteration} is complete.

\subsection{Proof of Theorem \ref{thm:6.1}}\label{s:thm}

Let the additional assumption  \eqref{eq:u0} be satisfied  for some $L, N\geq 1$. We repeatedly apply Proposition~\ref{p:iteration} and obtain a sequence of  $\mathcal{F}$-measurable processes $(f_{\leq n},q_{n})\in C_0^\infty\times C_0^\infty$, $n\in\N_{0}$, such that  $f_{\leq n} \to f$ in
$C_{b}([0,\infty);B^{1/2-\delta}_{\infty,1})$  as a consequence of  \eqref{induction w}. Moreover, using {\eqref{iteration ps}} we
have for every $p\in[1,\infty)$
\begin{align*}
  & \int_0^{T}  \| f_{\leq n + 1} - f_{\leq n} \|_{B^{1/2}_{\infty,1}}^p \dif t
\\ &\leq\int_{2^{-n+2}}^{T} \| f_{\leq n + 1} - f_{\leq n}\|_{B^{1/2}_{\infty,1}}^p \dif t +
   \int_{2^{-n-1}}^{2^{-n+2}} \| f_{\leq n + 1} - f_{\leq n} \|_{B^{1/2}_{\infty,1}}^p \dif t
   +\int_0^{2^{-n-1}} \| f_{\leq n + 1} - f_{\leq n} \|_{B^{1/2}_{\infty,1}}^p \dif t
\\&\lesssim 2^{- n} M_L^{p/2}  +( r_{n+1}^{1/2}+r_n^{1/2})^{p}.
\end{align*}
Thus, the sequence $f_{\leq n}$, $n\in\N_{0}$, is  Cauchy hence converging in $L^{p}(0,T;B_{\infty,1}^{1/2})$ for all $p\in[1,\infty)$.
Accordingly, $f_{\leq n}\to f$ also in $L^p_{\rm{loc}}(0,\infty;B_{\infty,1}^{1/2})$.
Furthermore, by \eqref{eq:R}, \eqref{bd:R} we know for all $p\in[1,\infty)$
	$$
	\int_0^{T}\|q_n(t)\|^{p}_{X}\dif t\leq r_{n}^{p}T+(3M_L)^{p}2^{-n+1} \to 0, \quad\mbox{as}\quad n\to\infty.
	$$
Thus, by \eqref{besov} and similarly as in  \cite{CKL21} the process $\theta=\Lambda f$ satisfies \eqref{eq1}  in the analytically weak sense.
More precisely, we define $\theta_n=\Lambda f_{\leq n}$ and from \eqref{induction ps} we obtain for any $\psi\in C^\infty$
\begin{align*}
\langle \theta_n(t)-P_{\leq {\lambda_n/3}}\theta_0,\psi\rangle= \int_0^t\langle \theta_n
   \nabla^{\perp} \Lambda^{-1}\theta_n+\nabla q_n, \nabla\psi\rangle + \langle\Lambda^{\alpha}
   P_{\leq \lambda_n}\xi -\Lambda^{\gamma } \theta_{n}
,\psi\rangle\dif s.
\end{align*}
We rewrite the above as
\begin{align*}
\langle \theta_n(t)-P_{\leq {\lambda_n/3}}\theta_0,\psi\rangle= \int_0^t-\frac12\langle \Lambda^{-1/2}\theta_n,\Lambda^{1/2}[\mathcal{R}^\perp,\nabla \psi] \theta_n\rangle
  +\langle \nabla q_n, \nabla\psi\rangle + \langle\Lambda^{\alpha}
   P_{\leq \lambda_n}\xi -\Lambda^{\gamma } \theta_{n}
,\psi\rangle\dif s.\end{align*}
Let $n\rightarrow\infty$. As $\theta_n\rightarrow \theta$  in $L^{p}_{\text{loc}}(0,\infty;B_{\infty,1}^{-1/2})\cap C_{b}([0,\infty);B^{-1/2-\delta}_{\infty,1})$ and $q_n\to0$ in $L^p_{\text{loc}}([0,\infty);L^\infty)$ for all $p\in[1,\infty)$  by \cite[Proposition 5.1]{CKL21} and \eqref{besov} we deduce that $\theta$ satisfies \eqref{eq2} in the sense of Definition~\ref{d:1}. By \eqref{induction wf} and lower-semicontinuity we obtain
\begin{align}
\label{c1}
\|\theta\|_{C^1_{{b}}B^{-3/2-\delta}_{\infty,1}}\leq M_L^{1/2}+6M_0M_L^{1/2}.\end{align}
Also \eqref{induction w1} implies \eqref{eq:32}.

Next, we show that
\eqref{est:fn} follows from \eqref{induction fn}. Indeed, from the construction we know that the parameters $\beta,\delta$ are independent of $M_L$. Only \eqref{induction w1} requires
$a$ large enough depending on $M_L$. In view of \eqref{induction w} we could choose $a$ satisfying $a^{\beta}\simeq M_L^{1/2}/\eps$. By \eqref{induction fn} and \eqref{para} we know for some $\vartheta>1/2$
\begin{equation}
\label{eq:ee}
\|\theta\|_{C_{{b}}([4,\infty),B^{\vartheta-1}_{\infty,1})}\leq 3M_0M_L^{1/2}a^{\beta/4}+3M_0M_L^{1/2}+\|\theta_0\|_{C^\eta},
\end{equation}
Here $\vartheta$ might be different from $\vartheta$ in \eqref{induction fn} and by \eqref{induction fn} we could always find such $\vartheta$.

For a general initial condition $\theta_{0}\in C^\eta$ $\mathbf{P}$-a.s., define $$\Omega_{N,L}:=\{N-1\leq \|\theta_{0}\|_{C^\eta}< N\}\cap \{L-1\leq \|\xi\|_{C^{-1-\kappa}}< L\}\in\mathcal{F}.$$ Then the first part of this proof gives the existence of infinitely many measurable solutions $\theta^{N,L}$ on each $\Omega_{N,L}$. Letting $\theta:=\sum_{N,L\in\mathbb{N}}\theta^{N,L}1_{\Omega_{N,L}}$ we obtain a solution defined on the whole $\Omega$. Finally, using the moments of $\theta_{0}$ and the noise $\xi$ we obtain \eqref{est:fn} from \eqref{eq:ee} and \eqref{c1}, which completes the first part of the proof.

\subsubsection{Non-Gaussianity}
We first prove that the solution is not Gaussian if $\theta_0$ is bounded in $B^{-1/2-\delta}_{\infty,1}$. We can choose $a$ large enough such that \eqref{induction w1} holds and \eqref{eq:32} follows. From this, we now  prove that the solution is not Gaussian. If there exists $k \in \mathbb{Z}^2$ so that the Fourier coefficient $ \hat{f}(k)$ is not a.s. equal to a constant, then we are done. Namely, if $\theta_0$ is bounded in $B^{-1/2-\delta}_{\infty,1}$ then  \eqref{eq:32}  implies
that $\hat{f}(k)$ is bounded a.s. but not constant a.s.,  hence it is not Gaussian. Therefore also $f$ is not Gaussian.
If such a $k$ does not exist, then $f$ is deterministic so it cannot solve the
equation and we have a contradiction.

    If $\theta_0$ is non-Gaussian, let $k \in \mathbb{Z}^2$ be such that
$\hat{\theta}_0 (k)$ is non-Gaussian. Then we run each convex integration on
$\Omega_{N, L}$ with the additional condition (uniform in $\omega$)
\[ \lambda_0 / 3 \geqslant | k | . \]
Consequently, the perturbations $f^{N,L}_{n+1}$, $n\in\mathbb{N}_{0}$, do not affect $\hat{f}(k)$ and it holds
$$\hat{f} (k) = \sum_{N,L}\widehat{f^{N,L}_{\leqslant 0}} (k)1_{\Omega_{N,L}} =\mathcal{F}
[P_{\leqslant \lambda_0 / 3} \Lambda^{- 1} \theta_0] (k) =\mathcal{F}
[\Lambda^{- 1} \theta_0] (k)$$ which is non-Gaussian.

\subsubsection{Non-uniqueness of solutions}\label{s:nu}
Non-uniqueness can be proved by a small modification of our construction, the idea comes from \cite{Ise22}. Namely, if we  switch
 the sign of one $a_{j, n + 1}$ for one $j\in\{1,2\}$ during our construction, then all the estimates in Proposition \ref{p:iteration}
do not change and we obtain different solutions. To be more precise, let $\sigma:\mathbb{N}\rightarrow \{1,2\}$.
Let us take for $n\in\mathbb{N}_0$
$$a^\sigma_{1, n + 1} = (-1)^{\sigma(n+1)} a_{1, n + 1}, \qquad a^\sigma_{2, n + 1} = a_{2, n +
   1} $$
and define $f^\sigma_{n + 1}$  using $a_{j,n+1}^\sigma$ in place of $a_{j,n+1}$. Then we obtain that $f^\sigma_{n+1}$ has a limit $f^\sigma$ and $\theta^\sigma=\Lambda f^\sigma$  is a solution to \eqref{eq1}. We show that for $\sigma_1\neq \sigma_2$ the corresponding solutions $f^{\sigma_1}$, $ f^{\sigma_2}$ are different. Let $n$ be the smallest number such that $\sigma_1(n)\neq \sigma_2(n)$. Without loss of generality we assume $\sigma_{1}(n)=2$, $\sigma_{2}(n)=1$. For $t\geq4$ it holds
$$ f^{\sigma_1}_{n }(t,x) - f^{\sigma_2}_{n }(t,x) = 2 a_{1, n } (t,x)\cos (5 \lambda_{n } l_1
   \cdummy x). $$
Now, similarly as in Remark~\ref{r:1}, we can compute the $k$th Fourier
coefficient of $f^{\sigma_1}_{n }(t) - f^{\sigma_2}_{n }(t)$ for $t\geq4$. It equals to
\begin{align*}
\mathcal{F}[f^{\sigma_1}_{n }(t) - f^{\sigma_2}_{n }(t)](k)&= \frac2{(2\pi)^2} \sqrt{\frac{r_{n-1}}{5 \lambda_{n }}} \left\langle \sqrt{C_0
   +\mathcal{R}_1^o \frac{q_{\ell_n}}{r_{n-1}}}, P_{\leqslant \mu_{n }} e^{-i (k+5
   \lambda_{n } l_1 ) \cdummy x} \right\rangle \\
&\qquad +  \frac2{(2\pi)^2}\sqrt{\frac{r_{n-1}}{5 \lambda_{n }}} \left\langle \sqrt{C_0
   +\mathcal{R}_1^o \frac{q_{\ell_n}}{r_{n-1}}}, P_{\leqslant \mu_{n}} e^{-i (k - 5
   \lambda_{n } l_1) \cdummy x} \right\rangle. \end{align*}
Here $q_{\ell_n}=q_{n-1}*_t\varphi_{\ell_n}$.
The above is non-zero if either
$$ | k+ 5 \lambda_{n} l_1 | \leqslant \mu_{n }, \qquad \tmop{or}
   \qquad | k - 5 \lambda_{n } l_1 | \leqslant \mu_n . $$
   This cannot happen at the same time. Let $g:=2\sqrt{\frac{r_{n-1}}{5 \lambda_{n }}} \sqrt{C_0
   +\mathcal{R}_1^o \frac{q_{\ell_n}}{r_{n-1}}}$.
Then we obtain by Parseval's equality and \eqref{eq:R} for $t\geq4$
\begin{align*}
\int_{\mathbb{T}^{2}} | f^{\sigma_1}_{n }(t) - f^{\sigma_2}_{n }(t)|^2 \dif x&=(2\pi)^2\sum_{k\in\mathbb{Z}^{2}} |
   \mathcal{F}[ f^{\sigma_1}_{n }(t) - f^{\sigma_2}_{n }(t) ](k ) |^2
\\&\geq (2\pi)^2\sum_{k = 5 \lambda_{n } l_1, -5 \lambda_{n } l_1} |
   \mathcal{F}[ f^{\sigma_1}_{n }(t) - f^{\sigma_2}_{n }(t) ](k ) |^2
\\&  = 2\cdot(2\pi)^2| \mathcal{F} g (0) |^2 = \frac2{(2\pi)^2}\left| \int_{\mathbb{T}^{2}}
   g \,\dif x \right|^2 \geqslant \frac{32r_{n-1}\pi^2}{5 \lambda_{n }} (C_0 -
   1). \end{align*}
By \eqref{mol1} and a similar calculation as in \eqref{fn} we also have
   \begin{align*}
   &\| (f^{\sigma_1}_{\leq k + 1}(t) - f^{\sigma_1}_{\leq k}(t)) -
   (f^{\sigma_2}_{\leq k + 1}(t) -f^{\sigma_2}_{\leq k }(t) )\|_{ L^2}
   \\&\leq\| (f^{\sigma_1}_{\ell}(t) - f^{\sigma_1}_{\leq k}(t))\|_{L^2} +
  \| (f^{\sigma_2}_{\ell}(t) -f^{\sigma_2}_{\leq k }(t) )\|_{ L^2}+\| \tilde{f}^{\sigma_1}_{ {k + 1}}(t)\|_{L^2}+\| \tilde{f}^{\sigma_2}_{ {k + 1}}(t)\|_{L^2}
  \\&\leq 2M_0\lambda_{k+1}^{-1}M_L^{1/2}\lambda_k^{1/2}+C\sqrt{r_k/\lambda_{k+1}}\leq C\sqrt{r_k/\lambda_{k+1}}.
   \end{align*}
   Here we used similar argument as \eqref{mol1} and definition of $\tilde f_{n+1}$.
Thus, we obtain for $t\geq4$
\begin{align}
\|f^{\sigma_1}(t) - f^{\sigma_2}(t) \|_{L^2} &\geqslant \| f^{\sigma_1}_{n }(t) - f^{\sigma_2}_{n }(t)
   \|_{L^2} - \sum_{k = n}^{\infty} \| (f^{\sigma_1}_{\leq k + 1}(t) - f^{\sigma_1}_{\leq k}(t)) -
   (f^{\sigma_2}_{\leq k + 1}(t) -f^{\sigma_2}_{\leq k }(t) )\|_{ L^2} \nonumber\\
&\geq   \pi\sqrt{(C_0-1)\frac{32r_{n-1}}{5\lambda_n}}-C\sum_{k=n}^\infty \sqrt{\frac{r_{k}}{\lambda_{k+1}}}.\label{eq:lb}
\end{align}
  By choosing $a$ large enough such that $\lambda_n$ is increasing fast and $C_0$ large enough, we conclude that  the above is positive and consequently the two solutions are different. Thus non-uniqueness  follows.

\subsubsection{Infinitely many solutions}

The result of Section~\ref{s:nu} readily implies the existence of at least
countably many solutions. Assume for a contradiction that there is only a
finite number $M$ of solutions. Then we can find $M + 1$ different solutions
as follows. Let $\sigma_1, \ldots, \sigma_{M + 1} : \mathbb{N} \rightarrow \{
1, 2 \}$ be pairwise different. We apply the computation of Section~\ref{s:nu}
to each pair $\sigma_i, \sigma_j$, $i \neq j$. This way, we may need to modify
the parameters in convex integration at most $\binom{M + 1}{2}$-times, which
can be done. Thus, the corresponding lower bounds \eqref{eq:lb} are strictly
positive for each couple $\sigma_i, \sigma_j$ and hence the obtained $M + 1$
solutions are pairwise different.

 \subsubsection{Continuum of solutions} The idea  is to prove that choosing $n\geq n_{0}$ for some $n_{0}$ the above lower bound \eqref{eq:lb} is positive uniformly in our parameters $a,b,\beta$. This way we do not need to adjust them anymore and we get (at least) one solution for each $\sigma:\mathbb{N}+n_{0}\to\{1,2\}$. Here $\mathbb{N}+n_{0}$ means  that whenever the two mappings $\sigma\in \{1,2\}^{\mathbb{N}}$ differ from position $n_{0}$ on, then the corresponding solutions are different.

Since by our notation we have
\[ \sqrt{\frac{r_{n - 1}}{\lambda_n}} = a^{b^{n - 1} (- \beta - b) / 2}M_L^{1/2}\lambda_0^{\beta/2} \]
the lower bound \eqref{eq:lb} is of the form
\[ M_L^{1/2}\lambda_0^{\beta/2}C_1 a^{b^{n - 1} (- \beta - b) / 2} - M_L^{1/2}\lambda_0^{\beta/2}C_2 \sum^{\infty}_{k = n} a^{b^k (-
   \beta - b) / 2} \]
   for some universal constants $C_{1}, C_{2}>0$.
We claim and prove below, that for another universal constant $C_3>0$ independent of the parameters $a, \beta, b, n$ it holds
\begin{equation}\label{eq:lb1}
 \sum^{\infty}_{k = n} a^{(b^k - b^n) (- \beta - b) / 2} \leqslant C_3.
\end{equation}

As a consequence,
\[ \sum^{\infty}_{k = n} a^{b^k (- \beta - b) / 2} = a^{b^n (- \beta - b) / 2}
   \sum^{\infty}_{k = n} a^{(b^k - b^n) (- \beta - b) / 2} \leqslant C_3
   a^{b^n (- \beta - b) / 2} \]
and the lower bound \eqref{eq:lb} is bounded from below by
\begin{align*}
 &M_L^{1/2}\lambda_0^{\beta/2}\Big(C_1 a^{b^{n - 1} (- \beta - b) / 2} - C_2 C_3 a^{b^n (- \beta -
   b) / 2}\Big) \\
   &\qquad= M_L^{1/2}\lambda_0^{\beta/2}C_1 a^{b^{n - 1} (- \beta - b) / 2} \left( 1 - \frac{C_2 C_3
   a^{b^n (- \beta - b) / 2}}{C_1 a^{b^{n - 1} (- \beta - b) / 2}} \right).
\end{align*}
The expression on the right hand side  is positive if and only if
\begin{equation}
  \frac{C_2 C_3 a^{b^n (- \beta - b) / 2}}{C_1 a^{b^{n -
  1} (- \beta - b) / 2}} < 1 \quad \Leftrightarrow \quad r^{b^{n - 1} - b^{n -
  2}} < C_4 ,\label{eq:89}
\end{equation}
where $r = a^{b (- \beta - b) / 2} \leqslant 1 / 2$ (since we assumed
$a^{\beta b} \geqslant 4$ and $a \geqslant 1$), $b^{n - 1} - b^{n - 2} =
b^{n - 2} (b - 1) > 0$ and $C_{4}>0$ is a universal constant. Thus,  if we require
\begin{equation}
  \left( \frac{1}{2} \right)^{b^{n - 2} (b - 1)} < C_4, \label{eq:8}
\end{equation}
then the above inequality \eqref{eq:89} holds for all $r$, i.e. we get
positivity of the lower bound \eqref{eq:lb} for all choices of our parameters $a, b, \beta$.
Now, it remains to find the smallest $n = n_0$ so that \eqref{eq:8} holds.

Accordingly,  for every $\sigma_1, \sigma_2 : \mathbb{N}+ n_0 \rightarrow \{ 1, 2 \}$,
$\sigma_1 \neq \sigma_2$ we can prove that the corresponding solutions
$f^{\sigma_1}$ and $f^{\sigma_2}$ are different without the need to modify $a,
b, \beta$. Hence with the same choice of parameters, we get a solution for every
such $\sigma$. The cardinality of the set of such $\sigma$'s is $| \{ 1,
2 \}^{\mathbb{N}} | $ and that is continuum.

It remains to prove that uniformly in $a, b, \beta, n$ \eqref{eq:lb1} holds true. We have
\[ \sum^{\infty}_{k = n} a^{(b^k - b^n) (- \beta - b) / 2} \leqslant
   \sum^{\infty}_{k = n} a^{(b^{k - n} - 1) b^n (- \beta - b) / 2} =
   \sum^{\infty}_{k = 0} a^{(b^k - 1) b^n (- \beta - b) / 2}. \]
For $k \geqslant 2$ we have $b^k - 1 \geqslant k b$ so the above term is bounded by
\[1+ a^{(b - 1) b^n (- \beta - b) / 2} + \sum^{\infty}_{k = 2} a^{k b b^n (-
   \beta - b) / 2} \leqslant 1 + \frac{1}{1 - a^{b b^n (- \beta - b) / 2}}. \]
Since $a^{\beta b} \geqslant 4$ and $a \geqslant 1$ we get
\[ a^{b b^n (- \beta - b) / 2} \leqslant a^{b b^n (- \beta) / 2} \leqslant
   {\left( \frac{1}{2} \right)^{b^n}}  \leqslant 1 / 2, \]
hence
\[ 1 + \frac{1}{1 - a^{b b^n (- \beta - b) / 2}} \leqslant 3 =: C_3 .
\]
The proof is complete.

\subsection{A prescribed terminal value and (non)Gaussianity}\label{s:gaus}

The goal of this section is to address the question of (non)Gaussianity at positive times  of solutions starting from possibly Gaussian initial values. It turns out that the law at a fixed time $T\geq 4$ can be prescribed. In fact, by refining our convex integration construction we establish even a stronger  result. It shows that in addition to an initial condition, a terminal condition may be prescribed as well. In view of the non-Gaussianity result from Theorem~\ref{thm:6.1} which applies to non-Gaussian initial conditions, we can then for instance use the terminal value at time $T\geq 4$ as a new initial condition and repeat the first convex integration construction. This yields solutions which are non-Gaussian for every $t\in[4,\infty)$, independent of the law at the initial time. However, we have many other possibilities how to reiterate the convex integration. So in particular, we may as well prescribe that the solutions become Gaussian at each time of the form $T=4k$, $k\in\mathbb{N}$.

The main result of this section reads as follows.

  \bt\label{thm:6.11}
Let $T\geq 4$ {and $\delta>0$ be arbitrary}. For any    $v\in C^1_{T}C^\eta$  $\mathbf{P}$-a.s. with $\eta>1/2$, an $\mathcal{F}$-measurable  random variable with zero mean $\mathbf{P}$-a.s. such that $v(t)=v(T)$ for $t\in [T-1,T]$ and $\partial_tv(0)=0$,
there exists an $\mathcal{F}$-measurable analytically weak solution $\theta$ to \eqref{eq1} with $\theta(0)=v(0)$, $\theta(T)=v(T)$ and which belongs to
\begin{equation}\label{eq:tr}
L^p(0,T;B_{\infty,1}^{-1/2})\cap C([0,T],B_{\infty,1}^{-1/2-\delta})\cap C^1([0,T],B^{-3/2-\delta}_{\infty,1})\quad \mathbf{P}\text{-a.s. for all } p\in[1,\infty).
\end{equation}
Moreover, for any $\varepsilon>0$ we can find an $\mathcal{F}$-measurable analytically weak solution $\theta$ such that
\begin{equation}\label{eq:33}
\|\theta\|_{C_TB^{-1/2-\delta}_{\infty,1}}\leq \|v\|_{C_TB^{-1/2-\delta}_{\infty,1}}+\varepsilon,
\end{equation}
 There are infinitely many such solutions $\theta$.
\et

The proof follows by   similar arguments as in the proof of Theorem \ref{thm:6.1} using the iterative
Proposition \ref{p:iteration2} formulated below.

As a consequence, we are able to prescribe an arbitrary initial as well as  terminal law.

\begin{corollary}\label{cor:law}
Let $T\geq 4$. Let $\Lambda_{0}$, $\Lambda_{T}$ be arbitrary Borel probability measures on $C^{\eta},$ $\eta>1/2$, both supported on functions with zero mean. There exist infinitely many $\mathcal{F}$-measurable solutions $\theta$ to \eqref{eq1} satisfying \eqref{eq:tr} and \eqref{eq:33} and such that the law of $\theta_{0}$ is given by $\Lambda_{0}$ and the law of $\theta({T})$ is given by $\Lambda_{T}$.
\end{corollary}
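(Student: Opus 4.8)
The plan is to reduce the statement about laws to the pathwise construction of Theorem~\ref{thm:6.11} via a measurable selection / disintegration argument, exactly in the spirit of how Theorem~\ref{thm:6.1} handles general initial data by decomposing $\Omega$ into the sets $\Omega_{N,L}$. First I would enlarge the underlying probability space: let $\xi$ be the spatial white noise on $(\Omega,\mathcal F,\mathbf P)$ and introduce two independent random variables $\vartheta_0\sim\Lambda_0$ and $\vartheta_T\sim\Lambda_T$, also independent of $\xi$, all defined on a common space which, by abuse of notation, I still call $(\Omega,\mathcal F,\mathbf P)$. The pair $(\vartheta_0,\vartheta_T)$ is then an $\mathcal F$-measurable $C^\eta\times C^\eta$-valued random variable with zero mean in each component $\mathbf P$-a.s. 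Next I would build an $\mathcal F$-measurable interpolating curve $v$ with $v(0)=\vartheta_0$, $v(T)=\vartheta_T$, satisfying $v\in C^1_TC^\eta$ $\mathbf P$-a.s., $v(t)=v(T)$ for $t\in[T-1,T]$ and $\partial_t v(0)=0$: for instance take a fixed smooth scalar cutoff $\phi:[0,T]\to[0,1]$ with $\phi\equiv 0$ near $0$ (with vanishing derivative at $0$), $\phi\equiv 1$ on $[T-1,T]$, and set $v(t):=(1-\phi(t))\vartheta_0+\phi(t)\vartheta_T$. This is jointly measurable in $(\omega,t)$ and lands in the required class pointwise, since $C^\eta$ is a Banach space and $\phi$ is deterministic and smooth.

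With $v$ in hand, I would apply Theorem~\ref{thm:6.11} to obtain an $\mathcal F$-measurable analytically weak solution $\theta$ to \eqref{eq1} with $\theta(0)=v(0)=\vartheta_0$ and $\theta(T)=v(T)=\vartheta_T$, belonging to the space \eqref{eq:tr} and satisfying the smallness bound \eqref{eq:33}. The key point is that, by construction, $\mathrm{Law}(\theta(0))=\mathrm{Law}(\vartheta_0)=\Lambda_0$ and $\mathrm{Law}(\theta(T))=\mathrm{Law}(\vartheta_T)=\Lambda_T$, which is precisely what is claimed. Infinitely many such solutions are produced because Theorem~\ref{thm:6.11} itself yields infinitely many solutions (via the sign-flip mechanism of Section~\ref{s:nu}, which does not alter the prescribed endpoint values or the estimate \eqref{eq:33}), and these remain distinct after the present construction since they already differ $\mathbf P$-a.s.\ for a fixed realization of $(\xi,\vartheta_0,\vartheta_T)$.

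The only genuine subtlety, and the step I would be most careful about, is measurability: one must know that the solution map produced by the iterative scheme of Proposition~\ref{p:iteration2} can be realized so that $\theta$ depends $\mathcal F$-measurably on $\omega$ when the data $(\xi,v)$ are $\mathcal F$-measurable. This is already asserted in Theorem~\ref{thm:6.11} (the solution is $\mathcal F$-measurable for $\mathcal F$-measurable $v$), so strictly speaking nothing new is needed; but to be safe I would, as in the proof of Theorem~\ref{thm:6.1}, decompose $\Omega$ into the countably many measurable pieces on which $\|\vartheta_0\|_{C^\eta}$, $\|\vartheta_T\|_{C^\eta}$ and $\|\xi\|_{C^{-1-\kappa}}$ lie in unit dyadic windows, run the deterministic-bound version of the construction on each piece with the appropriate constants $N,L$, and glue. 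On each such piece all constants in Proposition~\ref{p:iteration2} are deterministic, so the $\mathcal F$-measurability of $\theta$ follows from that of $v$ and $\xi$ exactly as in Section~\ref{s:thm}. The laws of the endpoints are unaffected by this decomposition since gluing over a measurable partition does not change finite-dimensional distributions. This completes the argument.
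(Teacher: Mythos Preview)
Your proposal is correct and is essentially the intended argument: the paper does not give a separate proof of this corollary but presents it as an immediate consequence of Theorem~\ref{thm:6.11}, and the natural way to realize this is exactly your construction---enlarge the probability space by independent samples $\vartheta_0\sim\Lambda_0$, $\vartheta_T\sim\Lambda_T$, interpolate smoothly to obtain an admissible $v$, and invoke Theorem~\ref{thm:6.11}. The measurability and non-uniqueness points you flag are already absorbed into the statement and proof of Theorem~\ref{thm:6.11}, so no additional work is needed beyond what you wrote.
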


\begin{remark}
We note that by a straightforward modification of the convex integration construction,  the above results can be generalized to any $T>0$.
\end{remark}

Let us now show how the iteration leading to Theorem~\ref{thm:6.11} is set up. We use the same parameters as in Section \ref{s:it}.
 At each step $n$, a pair $(f_{\leq n},q_n)$ is constructed solving the following system
\begin{align}\label{induction ps1}
\begin{aligned}
-\partial_t \mathcal{R}f_{\leqslant n} + \Lambda f_{\leqslant n}
   \nabla^{\perp} f_{\leqslant n} +\mathcal{R} \Lambda^{- 1 + \alpha}
   P_{\leq \lambda_n}\xi &\approx \Lambda^{\gamma - 1} \nabla f_{\leqslant n} +
   \nabla q_n, \qquad t\in[0,T],\\
      f_{\leqslant n}(t)&=P_{\leq \lambda_n/3}\Lambda^{-1}v(T), \qquad t\in[T-2^{-n},T],\\
   f_{\leqslant n}(0)&=P_{\leq \lambda_n/3}\Lambda^{-1}v(0).\\
   \end{aligned}
   \end{align}
In other words, in addition to the initial condition and the noise, we also add the terminal value scale by scale. The overall structure of the iterative proposition is similar as above. The main difference is that at each iteration step, the perturbations are only added in the middle of the time interval $[0,T]$ and with increasing $n$  we are approaching both initial and terminal time using  suitably chosen cut-off functions.

Now we first assume that
\begin{align}\label{eq:u1}\|v\|_{C^1_TC^\eta}\leq N,\quad \|\xi\|_{C^{-1-\kappa}}\leq L.\end{align}
\begin{proposition}\label{p:iteration2}
Let $L,N\geq 1$ and assume \eqref{eq:u1}. There exists a choice of parameters $a, b, \beta$ such that the following holds true: Let $(f_{\leq n},q_{n})$ for some $n\in\N_{0}$ be an $\mathcal{F}$-measurable solution to \eqref{induction ps1} such that for any $t\in[0,T]$ the frequencies of $f_{\leq n}(t),q_n(t)$ are localized in a ball of radius $\leq 6\lambda_n$ and $\leq 12\lambda_n$, respectively, and for any $t\in[0,T]$
\begin{equation}\label{inductionv ps2}
\|f_{\leq n}(t)\|_{B^{1/2}_{\infty,1}}\leq13M_0M_L^{1/2}
\end{equation}
for a constant $M_0$ independent of $a,b,\beta,M_L$, and
\begin{align}\label{eq:C2}
\|f_{\leq n}\|_{C^1_TB^{1/2}_{\infty,1}}\leq M_0M^{1/2}_L\lambda_n,
\end{align}
for $\delta>\beta/2$
\begin{align}\label{induction wft}\|f_{\leq n}\|_{C^1B^{-1/2-\delta}_{\infty,1}}\leq M_L^{1/2}+M_0M^{1/2}_L\sum_{k=1}^n(\lambda_{k}^{-\delta}+\lambda_{k-1}^{-3/2-\delta-\eta})\leq M_L^{1/2}+6M_0M_L^{1/2},\end{align}
\begin{align}\label{eq:R2}
\|q_n(t)\|_{X}\leq
r_{n},\quad t\in [2^{-n+1},T-2^{-n+1}],
\end{align}
\begin{align}\label{bd:R1}
\|q_n(t)\|_{X}\leq \sum_{k=0}^nr_{k}\leq 3M_L,\quad t\in[0, 2^{-n+1})\cup (T-2^{-n+1},T].
\end{align}
 Then    there exists an $\mathcal{F}$-measurable solution $(f_{\leq n+1},q_{n+1})$ which solves \eqref{induction ps1} with $n$ replaced by $n+1$ and such that for $t\in[0,T]$ the frequencies of $f_{\leq n+1}(t),q_{n+1}(t)$ are localized in a ball of radius $\leq 6\lambda_{n+1}$ and $\leq 12\lambda_{n+1}$, respectively, and
 \begin{equation}\label{iteration ps2}
\|f_{\leq n+1}(t)-f_{\leq n}(t)\|_{B^{1/2}_{\infty,1}}\leq \begin{cases}
M_0 r_{n}^{1/2}+r^{1/2}_{n+1},& t\in (2^{-n+2},T-2^{-n+2}),\\
M_0 M_L^{1/2}+r^{1/2}_{n+1},& t\in (2^{-n-1},2^{-n+2}]\cup [T-2^{-n+2},T-2^{-n-1}),\\
r^{1/2}_{n+1},
&t\in [0,2^{-n-1}]\cup [T-2^{-n-1},T],
\end{cases}
\end{equation}
and for $\delta>\beta/2$
\begin{align}\label{induction w22}
\|f_{\leq n+1}-f_{\leq n}\|_{C_TB^{1/2-\delta}_{\infty,1}}\leq r^{1/2}_{n+1},
\end{align}
\begin{equation}\label{iteration R2}
\|q_{n+1}(t)\|_{X}\leq\begin{cases}
r_{n+1},& t\in {[2^{-n},T-2^{-n}]},\\
r_{n+1}+\|q_{n}\|_{C_TX},&t\in {[0,2^{-n})\cup (T-2^{-n},T]}.
\end{cases}
\end{equation}
Consequently, $(f_{\leq n+1},q_{n+1})$ obeys \eqref{inductionv ps2},  \eqref{eq:C2},  \eqref{induction wft}, \eqref{eq:R2} and \eqref{bd:R1} at the level $n+1$.
Moreover, for any $\eps>0$ we could choose the parameter $a$ large enough depending on $M_L, C_0$ such that
\begin{align}\label{induction w3}\|f_{\leq n+1}-f_{\leq n}\|_{C_TB^{1/2-\delta}_{\infty,1}}\leq \eps/2^{n+1}.
\end{align}
\end{proposition}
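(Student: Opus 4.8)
The plan is to run the convex integration iteration of Proposition~\ref{p:iteration} in a version symmetric about the midpoint of $[0,T]$, so I only describe the changes. First I would keep the parameters of Section~\ref{s:p} ($b=4$, $\beta$ small, $a$ large with $a^{b\beta}\geq16$, and $\beta<3/2-\gamma$, $1/b+\beta<1/2$, $(1-\alpha-\kappa)\wedge(\eta-1/2)\wedge(2-\gamma)>b\beta$), extend $f_{\leq n},q_n$ outside $[0,T]$ by their endpoint values — legitimate since $\partial_t f_{\leq n}(0)=0$ (using $\partial_t v(0)=0$ and the inductive structure) and since $f_{\leq n}\equiv P_{\leq\lambda_n/3}\Lambda^{-1}v(T)$ on $[T-2^{-n},T]$ by \eqref{induction ps1}, so also $\partial_t f_{\leq n}(T)=0$ — and mollify in time with the same one-sided kernel, $f_\ell=f_{\leq n}*_t\varphi_{\ell_{n+1}}$, $q_\ell=q_n*_t\varphi_{\ell_{n+1}}$, $\ell_{n+1}=\lambda_{n+1}^{-1}$. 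Because $\supp\varphi_{\ell_{n+1}}\subset[0,\ell_{n+1}]$ and $\ell_{n+1}\ll2^{-n-1}$, this mollification preserves $f_\ell(0)=P_{\leq\lambda_n/3}\Lambda^{-1}v(0)$ and $f_\ell(t)=P_{\leq\lambda_n/3}\Lambda^{-1}v(T)$ on $[T-2^{-n-1},T]$; moreover $\|q_\ell\|_X\leq\tilde\chi$ with $\tilde\chi$ as below, and the mollification estimates and the commutator bound on $R_{\mathrm{com}}$ are exactly \eqref{mol1}--\eqref{mol3}.

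Next I would take the same building blocks as in Section~\ref{sec:conf},
$$\tilde f_{n+1}(t,x)=\sum_{j=1}^2a_{j,n+1}(t,x)\cos(5\lambda_{n+1}l_j\cdot x),\qquad a_{j,n+1}=2\sqrt{\tfrac{\tilde\chi}{5\lambda_{n+1}}}\,P_{\leq\mu_{n+1}}\sqrt{C_0+\mathcal R_j^o\tfrac{q_\ell}{\tilde\chi}},$$
with $l_1=(3/5,4/5)^T$, $l_2=(1,0)^T$, but with \emph{symmetric} temporal cut-offs: $\tilde\chi=r_n$ on $[2^{-n+2},T-2^{-n+2}]$ and $\tilde\chi=3M_L$ off $[3\cdot2^{-n},T-3\cdot2^{-n}]$, and $\chi=1$ on $[2^{-n},T-2^{-n}]$ and $\chi\equiv0$ off $[2^{-n-1},T-2^{-n-1}]$, with $\|\tilde\chi'\|_{C^0}\leq3M_L2^n$, $\|\chi'\|_{C^0}\leq2^{n+1}$; the frequency localization of $\tilde f_{n+1}$ in an annulus of radius $\sim\lambda_{n+1}$ is exactly as in Remark~\ref{r:1}. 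Since $v$ is now genuinely time dependent, the boundary layer must be time dependent as well, so I would set $f_{n+1}^{in}(t):=(P_{\leq\lambda_{n+1}/3}-P_{\leq\lambda_n/3})\Lambda^{-1}v(t)$ and $f_{\leq n+1}:=f_\ell+\chi\tilde f_{n+1}+f_{n+1}^{in}$. Then $f_{\leq n+1}$ has frequencies in $\{|k|\leq6\lambda_{n+1}\}$, and using $\chi(0)=0$, $\chi\equiv0$ on $[T-2^{-n-1},T]$ and $v(t)=v(T)$ on $[T-1,T]$ one checks $f_{\leq n+1}(0)=P_{\leq\lambda_{n+1}/3}\Lambda^{-1}v(0)$ and $f_{\leq n+1}(t)=P_{\leq\lambda_{n+1}/3}\Lambda^{-1}v(T)$ on $[T-2^{-n-1},T]$, as \eqref{induction ps1} at level $n+1$ requires. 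Estimating $f_{n+1}^{in}$ via $\|v\|_{C^1_TC^\eta}\leq N$ in a $C^\vartheta$-norm ($1/2<\vartheta<\eta$) and a $B^{-1/2-\delta}_{\infty,1}$-norm — gaining $\lambda_n^{-1}$ from $\Lambda^{-1}$ and $\lambda_n^{-\eta}$, resp. $\lambda_n^{-1/2-\delta-\eta}$, from the frequency gap — and estimating $\tilde f_{n+1}$ and $\partial_t\tilde f_{n+1}$ exactly as in \eqref{fn}--\eqref{fn3} and \eqref{fn2}, I would obtain \eqref{iteration ps2}, \eqref{induction w22}, \eqref{induction w3}, \eqref{eq:C2} and \eqref{induction wft} at level $n+1$; summing the increments gives \eqref{inductionv ps2}, the constant growing from $10M_0M_L^{1/2}$ to $13M_0M_L^{1/2}$ only to absorb the two transition windows (instead of one) and the time-dependent $f_{n+1}^{in}$.

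Finally, subtracting \eqref{induction ps1} at level $n+1$ from the mollified equation yields the Reynolds decomposition $\nabla q_{n+1}\approx\nabla q_M+\nabla q_T+\nabla q_D+\nabla q_I+R_{\mathrm{com}}+\nabla q_N$ with the terms of Section~\ref{s:estq}, the only new feature being that $q_I$ now additionally contains $-\partial_t\mathcal R f_{n+1}^{in}$, which is harmless since $\|\partial_t f_{n+1}^{in}\|_{L^\infty}\lesssim\lambda_n^{-1}\|v\|_{C^1_TC^\eta}\leq\lambda_n^{-1}N$. Each term I would bound precisely as in Section~\ref{s:estq} — the inequalities $1/b+\beta<1/2$, $\eta-1/2>b\beta$, $\beta<3/2-\gamma$, $2-\gamma>b\beta$, $1-\alpha-\kappa>b\beta$, $\eta>b\beta/2$ entering in the same places and $a$ taken large to absorb implicit constants — so that $\|q_{n+1}\|_X\leq r_{n+1}$ on $[2^{-n},T-2^{-n}]$, while on each boundary strip one picks up an extra $\sup_{s\in[t-\ell_{n+1},t]}\|q_n(s)\|_X$ (from $(1-\chi^2)q_\ell$) plus the contribution of $\mathcal R\tilde f_{n+1}\partial_t\chi$, giving \eqref{iteration R2} and hence \eqref{eq:R2}, \eqref{bd:R1} at level $n+1$. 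The iteration would be started from $f_{\leq0}:=P_{\leq\lambda_0/3}\Lambda^{-1}v(\cdot)$ (which is $\equiv P_{\leq\lambda_0/3}\Lambda^{-1}v(T)$ on $[T-1,T]$ and has $\partial_t f_{\leq0}(0)=0$) with $q_0$ read off from \eqref{induction ps1}, exactly as after Proposition~\ref{p:iteration}. The hard part — really the only non-routine point — will be making the symmetric one-sided mollification and the time-dependent boundary layer simultaneously compatible with \emph{both} prescribed endpoint values while keeping every frequency support inside the prescribed balls; once that bookkeeping is in place, the doubling of the transition regions merely doubles the number of cases, and every single estimate is identical to those in Section~\ref{s:it}.
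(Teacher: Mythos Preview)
Your proposal is correct and follows essentially the same approach as the paper: the paper likewise extends by the value at $t=0$, mollifies one-sidedly, introduces the symmetric cut-offs $\tilde\chi$ and $\chi$ exactly as you describe, takes the time-dependent boundary layer $f_{n+1}^{in}=(P_{\leq\lambda_{n+1}/3}-P_{\leq\lambda_n/3})\Lambda^{-1}v$, and identifies as the sole new error term $\Delta^{-1}\nabla\cdot\partial_t\mathcal R f_{n+1}^{in}$, bounded via $\lambda_n^{-1}\|v\|_{C^1_TC^\eta}$. The only cosmetic differences are that the paper does not bother extending beyond $t=T$ (unnecessary since the one-sided mollifier looks backward and $f_{\leq n}$ is already constant on $[T-2^{-n},T]$), and its $\tilde\chi$ is slightly asymmetric at the right endpoint (equal to $3M_L$ on $[T-2^{-n+1},T]$ rather than $[T-3\cdot2^{-n},T]$), which is immaterial.
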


  Now, we start the iteration by letting $f_{\leq 0}=P_{\leq \lambda_0/3}\Lambda^{-1}v$. We obtain from \eqref{induction ps1}
  $$\nabla q_0 \thickapprox\Lambda f_{\leqslant 0}
   \nabla^{\perp} f_{\leqslant 0} +\mathcal{R} \Lambda^{- 1 + \alpha}
   P_{\leq \lambda_0}\xi -\Lambda^{\gamma - 1} \nabla f_{\leq0}-\partial_t \mathcal{R}f_{\leq0}  ,$$
   which by \eqref{eq:u1} implies (taking $q_{0}$ mean-free)
  $$\|q_0\|_{C_TX}\lesssim \|v\|_{C_TC^\eta}^2+\|v\|_{C_T^1C^\eta}+L\leq C(N^2+N+L)=M_L.$$
  Moreover,
  $$\|f_{\leq 0}\|_{C^1_TB^{1/2}_{\infty,1}}\leq N.$$
  Consequently, $(f_{\leq0},q_{0})$ obeys  \eqref{inductionv ps2}, \eqref{eq:C2}, \eqref{induction wft}, \eqref{eq:R2} and \eqref{bd:R1} at the level $0$.

Let us now prove  Proposition \ref{p:iteration2}. The parameters are chosen as in Section~\ref{s:p}.

\subsubsection{Mollification}

We intend to replace $f_{\leq n}$ by its time mollification. Let $\{\varphi_\varepsilon\}_{\varepsilon>0}$ be a family of  standard mollifiers with support of $\varphi$ in $(0,1)$. The one sided mollifier here is again used in order to preserve initial data. We extend $f_{\leq n}, q_n$ to $t<0$ by taking values at $t=0$. The equation also holds for $t<0$ as $\partial_t f_{\leq n}(0)=0$ by our construction. We define a mollification of $f_{\leq n}$, $q_n$  in  time by convolution as follows
	$$f_{\ell}=f_{\leq n}*_t\varphi_{\ell_{n+1}},\qquad
	q_\ell=q_n*_t\varphi_{\ell_{n+1}},$$
	where  $\ell:=\ell_{n+1}=\lambda_{n+1}^{-1}$.
We  mollify both sides of \eqref{induction ps1} and since $\ell<2^{-n-1}$ we obtain
\begin{align}\label{mo2}
\begin{aligned}
- \partial_t \mathcal{R}f_{\ell} + \Lambda f_{\ell} \nabla^{\perp}
   f_{\ell} +\mathcal{R} \Lambda^{- 1 + \alpha}P_{\leq\lambda_{n}} \xi
   &\approx  \Lambda^{\gamma - 1} \nabla f_{\ell} + \nabla q_{\ell} +
   R_{\tmop{com}} ,\qquad t\in [0,T],\\
   f_{\ell}(t)&=P_{\leq \lambda_n/3}\Lambda^{-1}v(T), \qquad t\in[T-2^{-n-1},T],\\
   f_\ell(0)&=P_{\leq\lambda_n/3}\Lambda^{-1}v(0).
   \end{aligned}
   \end{align}
   Here $R_{\tmop{com}}=\Lambda f_{\ell} \nabla^{\perp}
   f_{\ell}-(\Lambda f_{\leq n}\nabla^\bot f_{\leq n})*_t\varphi_{\ell}$.
  In view of \eqref{eq:C2} similar as \eqref{mol1} we obtain
 \begin{align}\label{mol2a}
  \begin{aligned}
\|f_{\ell}-f_{\leq n}\|_{C_TB_{\infty,1}^{1/2}}
  \leq \frac14r^{1/2}_{n+1},
  \end{aligned}
  \end{align}
  and
  \begin{align}\label{mol2b}
\|f_\ell\|_{C_T^1B_{\infty,1}^{1/2}}\leq \|f_{\leq n}\|_{C_T^1B_{\infty,1}^{1/2}}\leq M_0 M^{1/2}_L\lambda_n.
  \end{align}

\subsubsection{Construction of $f_{\leq n+1}$}

We first introduce a smooth cut-off function satisfying
\begin{align*}
\tilde{\chi}(t)=\begin{cases}
r_n,&t\in[ 2^{-n+2},T-2^{-n+2}],\\
\in (r_n,3M_L),& t\in ({3\cdot2^{-n}},2^{-n+2} )\cup (T-2^{-n+2},T-2^{-n+1}) ,\\
3M_L,&t\in [0,{3\cdot2^{-n}}]\cup [T-2^{-n+1},T].
\end{cases}
\end{align*}
Here in the middle interval we  smoothly interpolate such that
$\|\tilde{\chi}'\|_{C^{0}}\leq 3M_L2^{n}$ and similarly as above $\|q_\ell\|_X\leq\tilde{\chi}$.
Now, similarly as in Section \ref{sec:conf} we define
$$
\tilde{f}_{n+1}(t,x)=\sum_{j=1}^2a_{j,n+1}(t,x)\cos (5\lambda_{n+1}l_j\cdot x), \quad a_{j,n+1}=2\sqrt{\frac{\tilde{\chi}}{5\lambda_{n+1}}}P_{\leq \mu_{n+1}}\sqrt{C_0+\mathcal{R}_j^o\frac{q_{\ell}}{\tilde{\chi}}},
$$
where $l_1=(\frac35,\frac45)^T$, $l_2=(1,0)^T$.
As $L,N$  as well as the parameters $a,b,\beta$  are deterministic,   $\tilde{f}_{n+1}$ is $\mathcal{F}$-measurable.

We also introduce the following cut-off function
\begin{align*}
\chi(t)=\begin{cases}
0,& t\in [0,2^{-n-1}]\cup [T-2^{-n-1},T],\\
\in (0,1),& t\in (2^{-n-1},2^{-n} )\cup (T-2^{-n},T-2^{-n-1} ),\\
1,&t\in [2^{-n},T-2^{-n}].
\end{cases}
\end{align*}
Here in the middle interval we  interpolates smoothly such that it holds
$\|\chi'\|_{C^{0}}\leq 2^{n+1}$. Now, define $$f_{n+1}=\chi\tilde{f}_{n+1}+f_{n+1}^{in}:=\chi\tilde{f}_{n+1}+(P_{\leq \lambda_{n+1}/3}-P_{\leq \lambda_n/3})\Lambda^{-1}v,$$
which is mean zero and $\mathcal{F}$-measurable. We first have
\begin{align}\label{init}\|f_{n+1}^{in}\|_{C_TB_{\infty,1}^{1/2}}\leq\|f_{n+1}^{in}\|_{C^1_TB_{\infty,1}^{1/2}}\lesssim\lambda_n^{-1}\|v\|_{C^1_TC^\eta}\leq \frac12r^{1/2}_{n+1},
\end{align}
where we used $\beta b<2$.
In view of definition of $\tilde{f}_{n+1}$ we use similar calculation as in \eqref{fn} to obtain
\begin{align}\label{fnt}
\|\tilde{f}_{n+1}(t)\|_{B_{\infty,1}^{1/2}}
&\lesssim \lambda_{n+1}^{1/2}\left((C_0+1)\frac{\tilde\chi}{5\lambda_{n+1}}\right)^{1/2}
\nonumber\\&\leq
\begin{cases}
M_0r_n^{1/2},& t\in[2^{-n+2},T-2^{-n+2}],\\
 M_{0}M_{L}^{1/2},&t\in [0,2^{-n+2})\cup(T-2^{-n+2},T] .
 \end{cases}
\end{align}
Furthermore, similarly as in \eqref{fn3} for $t\in[0,T]$ we obtain
\begin{align}\label{fn3t}
\|\tilde{f}_{n+1}(t)\|_{B_{\infty,1}^{1/2-\delta}}
\leq \frac14r^{1/2}_{n+1}.
\end{align}
Here we used $\delta>\beta/2$ and we chose $a$ sufficiently large depending on $C_{0}$.

 The new solution $f_{\leq n+1}$ is defined as
$$f_{\leq n+1}:=f_\ell+f_{n+1},$$
which is also mean zero and $\mathcal{F}$-measurable.

\subsubsection{Inductive estimates for $f_{\leq n+1}$}
From construction we see that $\supp\widehat{f_{\leq n+1}}\subset \{|k|\leq 6\lambda_{n+1}\}$ and $f_{\leq n+1}(0)=P_{\leq \lambda_{n+1}/3}\Lambda^{-1}v(0)$ and $f_{\leqslant n+1}(t)=P_{\leq \lambda_{n+1}/3}\Lambda^{-1}v(T)$, $t\in[T-2^{-n-1},T]$. We first prove \eqref{iteration ps2}. Combining \eqref{init}, \eqref{mol2a} and \eqref{fnt} we obtain for $t\in(2^{-n+2},T-2^{-n+2})$
$$\|f_{\leq n+1}(t)-f_{\leq n}(t)\|_{B_{\infty,1}^{1/2}}\leq \|f_\ell-f_{\leq n}\|_{C_tB_{\infty,1}^{1/2}}+\|f_{n+1}(t)\|_{B_{\infty,1}^{1/2}}\leq M_0r_n^{1/2}+r^{1/2}_{n+1}.$$
Also combining \eqref{init} and \eqref{mol2a} and \eqref{fnt} we obtain for $t\in(2^{-n-1},2^{-n+2}]\cup [T-2^{-n+2},T-2^{-n-1})$
$$\|f_{\leq n+1}(t)-f_{\leq n}(t)\|_{B_{\infty,1}^{1/2}}\leq \|f_\ell-f_{\leq n}\|_{C_tB_{\infty,1}^{1/2}}+\|f_{n+1}(t)\|_{B_{\infty,1}^{1/2}}\leq M_0M_L^{1/2}+r^{1/2}_{n+1}.$$
Also combining \eqref{init} and \eqref{mol2a}  we obtain for $t\in[0, 2^{-n-1}]\cup [T-2^{-n-1},T]$
$$\|f_{\leq n+1}(t)-f_{\leq n}(t)\|_{B_{\infty,1}^{1/2}}\leq \|f_\ell-f_{\leq n}\|_{C_tB_{\infty,1}^{1/2}}+\|f_{n+1}(t)\|_{B_{\infty,1}^{1/2}}\leq r^{1/2}_{n+1}.$$
 Thus \eqref{iteration ps2} holds. Moreover, we have
\begin{align*}
\|f_{\leq n+1}(t)\|_{B_{\infty,1}^{1/2}}&\leq N+\sum_{k=0}^n\|f_{\leq k+1}(t)-f_{\leq k}(t)\|_{B_{\infty,1}^{1/2}}
\\&\leq N+\sum_{k=0}^n(M_0r_k^{1/2}+r^{1/2}_{k+1})+\sum_{k=0}^nM_0M_L^{1/2}1_{t\in (2^{-k-1},2^{-k+2}]\cup[T-2^{-n+2},T-2^{-n-1})}
\\&\leq N+\sum_{k=0}^n(M_0r_k^{1/2}+r^{1/2}_{k+1})+6M_0M_L^{1/2}.
\end{align*}
Thus \eqref{inductionv ps2} follows. \eqref{induction w22} and \eqref{induction w3} follow from \eqref{fn3t}, \eqref{init} and \eqref{mol2a}.  With \eqref{inductionv ps2} at hand, we shall estimate the time derivative. We use similar calculation as in \eqref{fn2} to obtain
\begin{align}\label{fn2t}
\|\partial_{t}\tilde{f}_{n+1}\|_{C_{T}B_{\infty,1}^{1/2}}&
\leq (M_0-1)M^{1/2}_L\lambda_{n+1}.\nonumber
\end{align}
Thus \eqref{eq:C2} at level of $n+1$ follows from \eqref{mol2b} and \eqref{init}. Moreover, \eqref{induction wft} follows from similar argument as \eqref{induction wf}.

\subsubsection{Inductive estimate for $q_{n+1}$}
The error term $q_{n+1}$ is defined the same way  as Section \ref{s:estq}.
First, we estimate each term for $t\in [2^{-n},T-2^{-n}]$. In this case the estimate of $q_{n+1}$ is the same as Section \ref{s:estq} and the only difference is that now have the following term:
$$
\begin{aligned}
&\|\Delta^{-1}\nabla \cdot \partial_t\mathcal{R} f^{in}_{n+1}\|_{C_TX}\lesssim  \frac{\log\lambda_{n+1}} {\lambda_{n}} \|v\|_{C_T^1C^\eta}<\frac1{12}r_{n+1}.
\end{aligned}$$

 For $t\in (2^{-n-1},2^{-n} )\cup (T-2^{-n},T-2^{-n-1})$ we have the extra term $\mathcal{R}\tilde{f}_{n+1}(t)\partial_{t}\chi$ and
$q_{n+1}^1=(1-\chi^2)q_\ell.$
Then we have
$$\|q_{n+1}^1(t)\|_X\leq\sup_{s\in[t-\ell,t]}\|q_n(s)\|_X\leq \|q_n\|_{C_TX},$$
and
$$\|\Delta^{-1}\nabla\cdot\mathcal{R}\tilde{f}_{n+1}(t)\partial_{t}\chi \|_X<\frac1{24}r_{n+1}.$$
 Moreover on $t\in[0,2^{-n-1}]\cup [T-2^{-n-1},T]$ we have %
 $$
\nabla q_{n+1}\approx\nabla q_\ell+R_{\rm{com}}{-\nabla\Lambda^{\gamma-1}f^{in}_{n+1}}+\nabla q_N+\Lambda f^{in}_{n+1}\nabla^\bot f_{\ell}+\Lambda f_{\ell}\nabla^\bot f^{in}_{ n+1}+\Lambda f_{n+1}^{in}\nabla^\perp f_{n+1}^{in}- \partial_t\mathcal{R} f^{in}_{n+1}.$$
 Except for the first term, the other ones could be bounded by the previous argument. Thus we deduce
   $$\|q_{n+1}^1(t)\|_X\leq r_{n+1}+\sup_{s\in[t-\ell,t]}\|q_n(s)\|_X.$$
   Hence \eqref{iteration R2}, \eqref{eq:R2} and  \eqref{bd:R1} hold.

\subsection{Coming down from infinity}\label{s:com}

The universality of the  bound \eqref{eq:32} in Theorem~\ref{thm:6.1} and \eqref{eq:33} in Theorem~\ref{thm:6.11} has an interesting consequence: a coming down from infinity with respect to the initial condition and the  perturbation of the system. In particular,   we can  construct solutions which completely forget the initial condition and are bounded independently of the perturbation $\zeta$.

\begin{corollary}\label{c:coming1}
Let $T\geq 4$,  $\varepsilon>0$, $\delta>0$ and let $\theta_{0}\in C^{\eta}$, $\eta>1/2$, $\mathbf{P}$-a.s. be $\mathcal{F}$-measurable and with zero mean $\mathbf{P}$-a.s. There exist infinitely many $\mathcal{F}$-measurable solutions $\theta$ to \eqref{eq1} such that
$
\theta(t)$, $t\geq T,$ is independent of $\theta_{0}$
and
\begin{equation}\label{eq:333}
\|\theta\|_{C_{b}([T,\infty),B_{\infty,1}^{-1/2-\delta})}\leq \varepsilon.
\end{equation}
\end{corollary}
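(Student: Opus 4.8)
The plan is to split $[0,\infty)$ at time $T$: use Theorem~\ref{thm:6.11} on $[0,T]$ to build a ``bridge'' from $\theta_{0}$ to the zero function, use Theorem~\ref{thm:6.1} on $[T,\infty)$ to continue from $0$, and glue the two pieces. Fix $\theta_{0}$, $T\geq 4$ and $\varepsilon,\delta>0$. Choose a smooth scalar cut-off $\phi:[0,T]\to[0,1]$ with $\phi(0)=1$, $\phi'(0)=0$ and $\phi\equiv 0$ on $[T-1,T]$, and set $v(t):=\phi(t)\theta_{0}$. Then $v\in C^{1}_{T}C^{\eta}$ is $\mathcal{F}$-measurable with zero mean $\mathbf{P}$-a.s., $v(0)=\theta_{0}$, $v(T)=0$, $v(t)=v(T)$ for $t\in[T-1,T]$ and $\partial_{t}v(0)=0$, so Theorem~\ref{thm:6.11} (with this $v$, the given $\delta$, and $\varepsilon$) yields an $\mathcal{F}$-measurable analytically weak solution $\theta^{(1)}$ of \eqref{eq1} on $[0,T]$ lying in the space \eqref{eq:tr}, with $\theta^{(1)}(0)=\theta_{0}$ and $\theta^{(1)}(T)=v(T)=0$.

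For the second piece, note that the constant function $0$ belongs to $C^{\eta}$ and has zero mean, so Theorem~\ref{thm:6.1} applied with initial datum $0$ produces an $\mathcal{F}$-measurable analytically weak solution $\tilde\theta$ of \eqref{eq1} on $[0,\infty)$ with $\tilde\theta(0)=0$ and, by \eqref{eq:32},
\[
\|\tilde\theta\|_{C_{b}([0,\infty),B^{-1/2-\delta}_{\infty,1})}\leq\|0\|_{B^{-1/2-\delta}_{\infty,1}}+\varepsilon=\varepsilon .
\]
Crucially, since the datum is the deterministic function $0$, one has $f_{\leq 0}=P_{\leq\lambda_{0}/3}\Lambda^{-1}\cdot 0=0$ and every subsequent perturbation in the construction of Sections~\ref{s:it}--\ref{s:thm} is built only from the mollified errors $q_{\ell}$ and the truncated noise $P_{\leq\lambda_{n}}\xi$; hence $\tilde\theta$ is a functional of $\xi$ alone and carries no dependence on $\theta_{0}$. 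Because the forcing $\Lambda^{\alpha}\xi$ does not depend on time, \eqref{eq1} is autonomous, and therefore $\theta^{(2)}(t):=\tilde\theta(t-T)$, $t\geq T$, is an $\mathcal{F}$-measurable analytically weak solution of \eqref{eq1} on $[T,\infty)$ with $\theta^{(2)}(T)=0$ and $\|\theta^{(2)}\|_{C_{b}([T,\infty),B^{-1/2-\delta}_{\infty,1})}\leq\varepsilon$.

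Now define $\theta(t):=\theta^{(1)}(t)$ for $t\in[0,T]$ and $\theta(t):=\theta^{(2)}(t)$ for $t\geq T$. Since $\theta^{(1)}(T)=0=\theta^{(2)}(T)$ the definition is consistent, $\theta$ is $\mathcal{F}$-measurable, and it belongs to $L^{2}_{\mathrm{loc}}(0,\infty;\dot{H}^{-1/2})\cap C([0,\infty);B^{-1}_{\infty,1})$ (using \eqref{besov}, the membership \eqref{eq:tr} of $\theta^{(1)}$, and the regularity of $\theta^{(2)}$ from Theorem~\ref{thm:6.1}); moreover, by the one-sided cut-offs used in Propositions~\ref{p:iteration} and~\ref{p:iteration2}, both one-sided time derivatives of the pieces at $t=T$ vanish, so $\theta$ inherits the $C^{1}_{b}$-type regularity as well. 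That $\theta$ solves \eqref{eq1} in the sense of Definition~\ref{d:1} is immediate for $t\leq T$, and for $t>T$ it follows by adding the weak formulation for $\theta^{(1)}$ over $[0,T]$ to that for $\theta^{(2)}$ over $[T,t]$ and using $\theta^{(1)}(T)=\theta^{(2)}(T)$ to merge the endpoint terms. The bound \eqref{eq:333} is precisely the estimate on $\theta^{(2)}$ above, and the restriction $\theta|_{[T,\infty)}=\theta^{(2)}$ was constructed without any reference to $\theta_{0}$, which is the asserted forgetting of the initial condition. Finally, Theorem~\ref{thm:6.1} furnishes a continuum of distinct solutions started at $0$ (cf.\ the non-uniqueness/continuum arguments in Section~\ref{s:thm}); fixing each of them in turn as $\theta^{(2)}$ and building the corresponding bridge $\theta^{(1)}$ produces infinitely many — in fact a continuum of — solutions with the stated properties.

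Since the construction is a soft combination of the two existence theorems, there is no serious analytic difficulty. The one point requiring attention is the verification that the spliced process is genuinely a solution in the sense of Definition~\ref{d:1} with the claimed regularity across $t=T$; this relies on the exact matching $\theta^{(1)}(T)=\theta^{(2)}(T)=0$ and on the vanishing of both one-sided time derivatives there, which are built into the cut-off choices of the two iterative schemes, together with the elementary time-translation invariance of \eqref{eq1}, legitimate because $\Lambda^{\alpha}\xi$ is time-independent.
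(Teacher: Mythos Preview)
Your proof is correct and follows exactly the route the paper has in mind: the discussion preceding Theorem~\ref{thm:6.11} already suggests using the terminal value at time $T$ as a new initial condition and restarting the first convex integration, and the corollary is stated without proof precisely because this two-step splice is the intended argument. One small remark: the $C^{1}_{b}$-matching at $t=T$ that you discuss is not actually needed, since Definition~\ref{d:1} only requires $\theta\in L^{2}_{\mathrm{loc}}(0,\infty;\dot H^{-1/2})\cap C([0,\infty);B^{-1}_{\infty,1})$ and the integral identity, and the latter splices using only $\theta^{(1)}(T)=\theta^{(2)}(T)$; so you can safely drop that paragraph (the passage of $\partial_{t}f_{\leq n}(T)=0$ to the limit would require $C^{1}$-convergence, which is not asserted in the paper, only a uniform $C^{1}_{b}$-bound via lower semicontinuity).
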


The prescribed bound \eqref{eq:333} holds independently of the size of $(\theta_{0},\zeta)\in C^{\eta}\times C^{-2+\kappa}$. Accordingly, the result can be applied to possibly unbounded sequences $\zeta_{n}$, $\theta_{0,n}$, $n\in\mathbb{N}$, to obtain a sequence of solutions $\theta_{n}$ which is bounded in $C_{b}([T,\infty);B^{-1/2-\delta}_{\infty,1})$.
The result is sharp in the sense that convergence of solutions in $H^{-1/2}$   would already permit to pass to the limit in the nonlinear term in the equation,  leading to a contradiction if the data $\zeta_{n}$ or $\theta_{0,n}$ did not converge.

\section{Stationary solutions}
\label{s:in}

\subsection{Ergodic stationary solutions}
\label{s:erg}

In this section we are concerned with the long time behavior, existence of stationary solutions and their ergodic structure. Since non-uniqueness was shown in the previous section, the Markov semigroups given for a bounded measurable function $\varphi$ on $B^{-1/2}_{\infty,1}$ by
$$
P_{t}\varphi(\theta_{0})=\mathbf{E}[\varphi(\theta(t,\theta_{0}))]
$$
are not well defined. Therefore, we work in the framework of stationarity understood with respect to shifts on trajectories.
More precisely,
let $\cT:=C(\mR;B^{-1/2}_{p,1})\times C(\mR;B^{-2+\kappa}_{p,p})$ for some $2\leq p<\infty$ and $\kappa>0$ be the trajectory space and let $S_t$, $t\in\mR$, be the shift on trajectories, i.e.
$$
S_t(\theta,\zeta)(\cdot)=(\theta,\zeta)(\cdot+t),\quad t\in \mR,\quad (\theta,\zeta)\in\cT.
$$
We denote the set of probability measures on $\mathcal{T}$ by $\mathcal{P}(\mathcal{T})$. We formulate the notion of stationary solution. Here and in the sequel, $\zeta$ represents the noise part in \eqref{eq1}, i.e. $\zeta=\Lambda^{\alpha}\xi$ for a spatial white noise $\xi$ with $0<\alpha<1-3\kappa$.
{In order to  construct  stationary solutions we first note that by a straightforward modification of Theorem~\ref{thm:6.1} we can  also construct solutions to the initial value problem starting from $\theta_0$ at time $-S$ for any $S\geq0$. Furthermore, these solutions can be regarded  as trajectories in  $\cT$ by setting $(\theta,\zeta)(t)=(\theta,\zeta)(-S+4)$ for $t\leq -S+4$. Note that with the latter extension to trajectories on $\mR$ we lose the initial value at time $-S$, however, that is irrelevant for the construction of stationary solutions below. We made this choice in order to be able to apply the bound \eqref{est:fn} uniformly on $\mR$.}

\begin{definition}
We say that  $((\Omega,\mathcal{F},\mathbf{P}),\theta,\xi)$ is a stationary solution to \eqref{eq2} provided it satisfies  \eqref{eq2} in the analytically weak sense on $(-\infty,\infty)$, more precisely
\begin{align*}
&\langle \theta(t),\psi\rangle+\int_s^t\frac12\langle \Lambda^{-1/2}\theta,\Lambda^{1/2}[\mathcal{R}^\perp\cdot,\nabla \psi] \theta\rangle\dif r
		\\&\qquad= \langle \theta(s),\psi\rangle + \int_{s}^{t}\langle\zeta -\nu\Lambda^{\gamma } \theta
		,\psi\rangle\dif r,
	\quad\forall \psi \in C^{\infty}(\mathbb{T}^{2}),\  t\geq s.
\end{align*}
and its law is shift invariant, that is,
$$\mathcal{L}[S_{t}(\theta,\zeta)]=\mathcal{L}[\theta,\zeta]\qquad\text{ for all }\quad t\in\mR.$$
\end{definition}

As the next step, we show that the convex integration solutions from Section~\ref{s:1.1} generate stationary solutions. These solutions are obtained as limits of ergodic averages from a Krylov--Bogoliubov argument.

\bt\label{th:s1}
Let $\theta$ be a solution  starting from $\theta_0$ at time $-S$ for $S\geq4$ and  such that $\E[\|\theta_0\|^m_{C^\eta}]<\infty$ for any $m\geq1$. Then there exists a sequence $T_{n}\to\infty$ and
 a stationary  solution $((\tilde\Omega,\tilde{\mathcal{F}},\tilde{\mathbf{P}}),\tilde\theta,\tilde \zeta)$ to \eqref{eq1} such that for some $\tau\geq0$
 $$
 \frac{1}{T_{n}}\int_{0}^{T_{n}}\mathcal{L}[S_{t+\tau}(\theta,\zeta)] \dif t\to \mathcal{L}[\tilde\theta,\tilde \zeta]
 $$
 weakly in the sense of probability measures on $\mathcal{T}$ as $n\to\infty$. Moreover, it holds true for some $m\in\mathbb{N}$ {and some $\delta\in (0,1)$}
 \begin{equation}\label{eq:s}
\tilde{\mathbf{E}}\left[ \|\tilde\theta\|_{C_{b}({\mR};B^{-1/2+\delta}_{\infty,1})}+\|\tilde\theta\|_{C_{b}^{{1-\delta}}({\mR};B^{-3/2-\delta}_{\infty,1})}\right]\lesssim 1+ \mathbf{E}[\|\theta_{0}\|^{m}_{C^{\eta}}].
 \end{equation}
\et

\begin{proof}
Using  \eqref{est:fn} and the construction we know that there
	exists $\delta\in(0,1)$ and $m\in\mathbb{N}$ so that
\begin{align}\label{eq:B1}
\mathbf{E} \left[\| \theta \|_{C_{b}(\mR;B_{\infty, 1}^{- 1 / 2 +
				\delta})}+\| \theta \|_{C_{b}^{1-\delta}(\mR; B^{- 3 / 2 - \delta}_{\infty, 1})}\right]\lesssim 1+ \mathbf{E}[\|\theta_{0}\|^{m}_{C^{\eta}}], \end{align}
For $T\geq0$ and $\tau\geq0$ we define the ergodic average as the probability measure on $\cT$
	\begin{align*}
		\nu_{\tau,T}:=\frac1T\int_0^T\cL[S_{t+\tau}(\theta,\zeta)]\dif t,
	\end{align*}
	and we show that the family $\nu_{\tau,T}$, $T\geq 0$, is tight. To this end,
we define for $R>0$ and $\kappa>0$
	$$B_R:=\Big\{g=(g_1,g_2)\in\cT;\,\|g_1\|_{C_{b}^{1-\delta}(\mR; B^{- 3 / 2 - \delta}_{\infty, 1})}+\|g_1\|_{C_{b}(\mR;
		B_{\infty, 1}^{- 1 / 2 + \delta})}+\|g_2\|_{C^\kappa_{b}(\mR; B^{-2+2\kappa}_{p,p})}\leq R\Big\},$$
	which is relatively compact in $\cT$.
	Then we use \eqref{eq:B1} and $\E[\|\zeta\|_{ B^{-2+2\kappa}_{p,p}}]\lesssim \E[\|\xi\|_{ B^{-1-\kappa}_{p,p}}]$ to have
\begin{align*}
		\nu_{\tau,T} (B_R^c) &= \left( \frac{1}{T} \int_0^T \cL[S_{t+\tau} (\theta,\zeta)]  \dif t\right) (B^c_R)\\
		&\leqslant \frac{1}{R T} \int_0^T \mathbf{E} \left[\| \theta(t+\tau+\cdot) \|_{C^{1-\delta}_{b}(\mR; B^{- 3 / 2 - \delta}_{\infty, 1})} + \|  \theta(t+\tau+\cdot) \|_{C_{b}(\mR;
			B_{\infty, 1}^{- 1 / 2 + \delta})}+\|\zeta\|_{ B^{-2+2\kappa}_{p,p}}\right] \dif t \\&\lesssim \frac{C(\theta_0)}{R} .
\end{align*}

So for $\tau\geq0$ the measures $\nu_{\tau,T-\tau}$, $T \geqslant 0$, are tight on $\cT$ and hence there is a weakly
	converging subsequence, i.e. there is a subsequence $T_n\to\infty$ and $\nu\in \cP(\cT)$ such that $\nu_{\tau,T_n-\tau}\to \nu$ weakly in $\cP(\cT)$. By a similar argument as in \cite[Lemma 5.2]{BFH20e} we know that for all $\tau\geq0$,  $\nu_{\tau,T_n-\tau}\to \nu$ weakly in $\cP(\cT)$, i.e. $\nu$ is independent of the choice of $\tau$. Furthermore, we take a sequence $\tau_m\to\infty$ and consider $\nu_{\tau_m,T_n-\tau_m}$, $m,n\in\mN$. Denote by $d$ the metric on $\cP(\cT)$ metrizing the weak convergence. For $m\in\mN$ we could find $n(m)$ such that
	$d(\nu_{\tau_m,T_{n(m)}-\tau_m},\nu)<\frac1m$. Hence, it follows that
	$\nu_{\tau_m,T_{n(m)}-\tau_m}\to \nu$ weakly in $\cP(\cT)$, as $m\to\infty$.
	
	 Define for every $T\geq0$ the set
\begin{align*}
	A_T=\Big\{(\theta,\zeta)\in \cT;\
&\langle \theta(t),\psi\rangle+\int_s^t\frac12\langle \Lambda^{-1/2}\theta,\Lambda^{1/2}[\mathcal{R}^\perp\cdot,\nabla \psi] \theta\rangle\dif r
		\\&= \langle \theta(s),\psi\rangle + \int_{s}^{t}\langle\zeta -\nu\Lambda^{\gamma } \theta
		,\psi\rangle\dif r,
	\quad\forall \psi \in C^{\infty}(\mathbb{T}^{2}),\  t\geq s\geq -T\Big\}.
\end{align*}
Since $(\theta,\zeta)$ in the statement of the theorem satisfies the equation, we
have for all $t\geq 0$ and $\tau\geq T+4$
\begin{align*}
	\cL[S_{t+\tau}(\theta,\zeta)](A_T)=1.
\end{align*}
Hence, for $m$ large enough, $\nu_{\tau_m,T_{n(m)}-\tau_m}(A_T)=1$. By Skorokhod theorem, we  find a probability space $(\tilde \Omega,\tilde{\mathcal{F}},\tilde{\mathbf{P}})$ and on it a sequence of random variables $(\tilde\theta^m,\tilde\zeta^m)$, $m\in\mathbb{N}$, such that $\cL[\tilde\theta^m,\tilde\zeta^m] ={\nu_{\tau_{m},T_{n(m)}-\tau_{m}}}$ %
and  $(\tilde\theta^m,\tilde\zeta^m)$ satisfy equation \eqref{eq1} on $[-T,\infty)$. Moreover, there is a  random variable $(\tilde\theta,\tilde\zeta)$ having the law $\cL[\tilde\theta,\tilde\zeta]=\nu$ so that
$$
(\tilde\theta^m,\tilde\zeta^m)\to(\tilde\theta,\tilde\zeta)\qquad \tilde{\mathbf{P}}\text{-a.s. in }\mathcal{T}.
$$
Thus, we can pass to the limit in the equation to deduce that $\nu$ is a law of a solution on $[-T,\infty)$ for every $T\geq0$.

	As a consequence of the same argument as in \cite[Lemma 5.2]{BFH20e} we deduce that the limit probability measure $\nu$ is shift invariant in the desired sense, i.e.  for all $G\in C_b(\cT)$ and all $r\in\mR$
\begin{align*}
			\int_{\cT} G\circ S_r(\theta,\zeta) \dif \nu(\theta,\zeta)=\int_{\cT} G(\theta,\zeta) \dif \nu(\theta,\zeta).
	\end{align*}
	It remains to observe that \eqref{eq:s} follows from a lower-semicontinuity argument, equality of laws and \eqref{eq:B1}
	\begin{align}\label{eq:ee1}
	\begin{aligned}
&	\tilde{\mathbf{E}}\left[ \|\tilde\theta\|_{C_{b}(\mR;B^{-1/2+\delta}_{\infty,1})}+\|\tilde\theta\|_{C_{b}^{1-\delta}(\mR;B^{-3/2-\delta}_{\infty,1})}\right]
\\&\qquad\leq \liminf_{n\to\infty}\tilde{\mathbf{E}}\left[ \|\tilde\theta^{n}\|_{C_{b}(\mR;B^{-1/2+\delta}_{\infty,1})}+\|\tilde\theta^{n}\|_{C_{b}^{1-\delta}(\mR;B^{-3/2-\delta}_{\infty,1})}\right]
\\&\qquad= \liminf_{n\to\infty}\frac{1}{T_{n}}\int_{0}^{T_{n}}{\mathbf{E}}\left[ \|S_{s+\tau}\theta\|_{C_{b}(\mR;B^{-1/2+\delta}_{\infty,1})}+\|S_{s+\tau}\theta\|_{C^{1-\delta}_{b}(\mR;B^{-3/2-\delta}_{\infty,1})}\right]\dif s\leq C(\theta_{0}).
	\end{aligned}
	\end{align}
\end{proof}

\begin{corollary}\label{cor:4.3}
There are infinitely many non-Gaussian stationary solutions.
\end{corollary}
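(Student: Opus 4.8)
The plan is to feed a one-parameter family of smooth deterministic initial data into Theorem~\ref{th:s1} and to tell the resulting stationary solutions apart by freezing a single low Fourier mode throughout the convex integration, exactly as in the non-Gaussianity part of the proof of Theorem~\ref{thm:6.1}.

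First I would fix $g\in C^\infty(\mathbb{T}^2)$ with zero mean and a frequency $k_0\in\mathbb{Z}^2$ with $\hat g(k_0)\neq 0$, and for each $c>0$ set $\theta_0:=cg$. Running the convex integration of Theorem~\ref{thm:6.1} started at time $-S$ with $S\geq 4$, with the additional normalisation $\lambda_0/3\geq |k_0|$, produces a solution $\theta^{(c)}$ on $[-S,\infty)$ with two properties. On the one hand, by the coming down bound \eqref{eq:32} (in its version on $[-S,\infty)$) one has $\|\theta^{(c)}\|_{C_b([-S,\infty);B^{-1/2-\delta}_{\infty,1})}\leq c\|g\|_{B^{-1/2-\delta}_{\infty,1}}+1=:R_c$, a deterministic constant, and after extending to $\mR$ by the constant value $\theta^{(c)}(-S+4)$ this bound holds for all $t\in\mR$. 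On the other hand, since the building blocks $\tilde f_{n+1}$ are supported at frequencies comparable to $\lambda_{n+1}$ and the scale-by-scale addition of the initial datum never reaches frequencies $\leq\lambda_0/3$, the $k_0$-th Fourier coefficient is frozen: $\hat\theta^{(c)}(t,k_0)=c\,\hat g(k_0)$ for every $t\in\mR$. As $\mathbf{E}[\|\theta_0\|_{C^\eta}^m]=c^m\|g\|_{C^\eta}^m<\infty$, Theorem~\ref{th:s1} applies and yields a stationary solution $((\tilde\Omega,\tilde{\mathcal F},\tilde{\mathbf P}),\tilde\theta^{(c)},\tilde\zeta^{(c)})$ arising as a weak limit along $T_n\to\infty$ of the ergodic averages $\nu_{\tau,T}:=\frac1T\int_0^T\cL[S_{t+\tau}(\theta^{(c)},\zeta)]\dif t$.

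Next I would transfer both properties to the limit. The set $\{(\phi,\zeta')\in\cT:\sup_{t\in\mR}\|\phi(t)\|_{B^{-1/2-\delta}_{\infty,1}}\leq R_c\}$ is closed in $\cT$ by lower semicontinuity of the Besov norm under convergence in $B^{-1/2}_{p,1}$, and carries full mass under every $\nu_{\tau,T}$, hence under the weak limit; thus $\tilde\theta^{(c)}(t)\in B^{-1/2-\delta}_{\infty,1}$ with norm $\leq R_c$ for all $t$, $\tilde{\mathbf P}$-a.s., so every Fourier coefficient $\hat{\tilde\theta}^{(c)}(t,k)$ is a.s. bounded. The evaluation $\Phi(\phi,\zeta'):=\hat\phi(0,k_0)$ is continuous on $\cT$ and, along the trajectories $S_{t+\tau}(\theta^{(c)},\zeta)$, it is constantly equal to $c\,\hat g(k_0)$, so $\Phi_*\nu_{\tau,T}=\delta_{c\hat g(k_0)}$ and, by the continuous mapping theorem, $\Phi_*\cL[\tilde\theta^{(c)},\tilde\zeta^{(c)}]=\delta_{c\hat g(k_0)}$, i.e. $\hat{\tilde\theta}^{(c)}(0,k_0)=c\,\hat g(k_0)$ $\tilde{\mathbf P}$-a.s. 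In particular $\cL[\tilde\theta^{(c)}]\neq\cL[\tilde\theta^{(c')}]$ whenever $c\neq c'$, so letting $c$ run over $\mathbb{N}$ (indeed over $(0,\infty)$) already produces infinitely many pairwise distinct stationary solutions.

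It remains to show each $\tilde\theta^{(c)}$ is non-Gaussian, which is the most delicate point. I would argue by contradiction: if $\tilde\theta^{(c)}$ were deterministic, then testing the weak formulation of \eqref{eq1} satisfied by $(\tilde\theta^{(c)},\tilde\zeta^{(c)})$ against $\psi\in C^\infty$ and differentiating in time would make $\langle\tilde\zeta^{(c)},\psi\rangle$ deterministic for every $\psi$, forcing the spatial white noise $\tilde\xi^{(c)}$ (with $\tilde\zeta^{(c)}=\Lambda^\alpha\tilde\xi^{(c)}$) to be deterministic, a contradiction. Hence some coefficient $\hat{\tilde\theta}^{(c)}(t_1,k_1)$ is non-constant; being a.s. bounded by the previous step, it cannot be Gaussian, and therefore $\tilde\theta^{(c)}$ is non-Gaussian. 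The two places I expect to require care are (i) checking that the mode $k_0$ is genuinely untouched at every step of the convex integration and that this is preserved in the Krylov--Bogoliubov limit through the continuous shift-invariant functional $\Phi$, and (ii) upgrading the $L^1(\tilde{\mathbf P})$ bound \eqref{eq:s} of Theorem~\ref{th:s1} to an almost sure, uniform-in-time bound, which is what makes the boundedness-versus-non-determinism dichotomy effective; working with a deterministic initial datum and the universal bound \eqref{eq:32} resolves both.
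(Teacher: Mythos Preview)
Your proof is correct and follows essentially the same approach as the paper. The paper uses the bound \eqref{induction w1} to show that for a deterministic $\theta_0$ with finitely many nonzero Fourier modes (and $\lambda_0$ large) the convex integration solution satisfies $\|\theta-\theta_0\|_{C_b(\mR;B^{-1/2-\delta}_{\infty,1})}\leq\varepsilon$, transfers this $\varepsilon$-ball constraint to the Krylov--Bogoliubov limit via Skorokhod and lower semicontinuity, and then varies $\theta_0$ over centers more than $2\varepsilon$ apart; non-Gaussianity is deduced exactly by your bounded/non-deterministic dichotomy. Your variant replaces the $\varepsilon$-ball discriminant by the exact freezing of a single low Fourier mode $\hat\theta(t,k_0)=c\,\hat g(k_0)$ (a consequence of the same frequency localization) and passes it to the limit via the continuous functional $\Phi$; this is a clean way to tell the stationary laws apart and is implicit in, but slightly sharper than, the paper's ball argument.
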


\begin{proof}
By \eqref{induction w1} we know that the solution $\theta$ obtained in Theorem \ref{thm:6.1} extended to $\mR$ as explained above satisfies
$$\|\theta-P_{\leq \lambda_0/3}\theta_0\|_{C_{b}(\mR;B^{-1/2-\delta}_{\infty,1})}\leq \eps.$$
Then for a deterministic $\theta_0$ with only finitely many non-zero Fourier modes we could choose $\lambda_0$ large enough such that $P_{\leq \lambda_0/3}\theta_0=\theta_0$ and consequently  $\nu_{\tau,T-\tau}(\theta:\|\theta-\theta_0\|_{C_{b}(\mR;B^{-1/2-\delta}_{\infty,1})}\leq \eps)=1$. This  implies that  $\tilde{\theta}^n$ in the proof of Theorem \ref{th:s1} satisfies {$\tilde{\mathbf{P}}$-a.s.}
$$\|\tilde{\theta}^n-\theta_0\|_{C_{b}(\mR;B^{-1/2-\delta}_{\infty,1})}\leq \eps.$$
Taking the limit we obtain by lower-semicontinuity {$\tilde{\mathbf{P}}$-a.s.}
$$\|\tilde{\theta}-\theta_0\|_{C_{b}(\mR;B^{-1/2-\delta}_{\infty,1})}\leq \eps.$$
Hence, we can choose different deterministic $\theta_0$ such that the corresponding stationary solution lives in different balls with respect to the ${C_{b}(\mR;B^{-1/2-\delta}_{\infty,1})}$-norm. Hence there exist infinitely many stationary solutions and the solution is not Gaussian by the same arguments as in the proof of Theorem \ref{thm:6.1}.
\end{proof}

\begin{definition}
A stationary solution $((\Omega,\mathcal{F},\mathbf{P}),\theta,\zeta)$ is ergodic provided
\[ \mathcal{L}[\theta,\zeta] (B) = 1 \quad \text{or} \quad \mathcal{L}[\theta,\zeta] (B) = 0 \quad \text{for all } B
\subset\mathcal{T} \text{ Borel and shift invariant}. \]
\end{definition}

\begin{remark}
Note that our notion of ergodicity of a solution $((\Omega,\mathcal{F},\mathbf{P}),\theta,\zeta)$ coincides with the ergodicity of the dynamical system
\[ \big(\mathcal{T}, \mathcal{B} (\mathcal{T}), (S_t, t \in \mR), \mathcal{L} [{\theta},\zeta]\big), \]see e.g. \cite[Chapter 1]{DPZ96}. In the Markovian framework, ergodicity of $({\theta},\zeta)$ in particular implies ergodicity of the  projection $\mu=\mathcal{L} [\theta]\circ\pi^{-1}_{0}$, with $\pi_{0}:C(\mR;B^{-1/2}_{p,1})\to B^{-1/2}_{p,1}$ is the projection onto $t=0$, with respect to the corresponding Markov semigroup, cf.  \cite[Chapter 3]{DPZ96}.
\end{remark}

By a general result applied also in \cite{BFH20e,FFH21} and using Theorem \ref{th:s1} we obtain the following.

\bt\label{thm:4.5}
There exist $K>0$ and an ergodic stationary solution $((\Omega,\mathcal{F},\mathbf{P}),\theta,\zeta)$ satisfying for some ${\delta}{\in (0,1)}$
\begin{equation}\label{eq:s55}
{\mathbf{E}}\left[ \|\theta\|_{C_{b}(\mR;B^{-1/2+\delta}_{\infty,1})}+\|\theta\|_{C_{b}^{1-\delta}(\mR;B^{-3/2-\delta}_{\infty,1})}\right]\leq K,
 \end{equation}
 and for a given deterministic $\theta_0$ with only finitely many non-zero  Fourier modes it holds $\mathbf{P}$-a.s.
\begin{equation}\label{eq:s56}\|\theta-\theta_0\|_{C_{b}(\mR;B^{-1/2-\delta}_{\infty,1})}\leq 1.\end{equation}
In particular, there are infinitely many non-Gaussian ergodic solutions.
\et

\begin{proof}
In view of Theorem~\ref{th:s1}, this is a consequence of the classical Krein--Milman argument. More precisely, it is enough to note that the set of all laws of  stationary solutions satisfying \eqref{eq:s55} and \eqref{eq:s56} is non-empty, convex, tight and closed which follows from the same argument as the proof of Theorem \ref{th:s1} and Corollary \ref{cor:4.3}. Hence there exist an extremal point. By a classical contradiction argument,  one shows that it is the law of an ergodic stationary solution. Also \eqref{eq:s56} implies that the law is not Gaussian by the same argument as the proof of Theorem \ref{thm:6.1}. {Non-uniqueness is achieved by choosing different $\theta_{0}$.}
\end{proof}

In Section~\ref{s:stat}, namely in Theorem~\ref{thm:4.1}, we construct infinitely many steady state solutions. These are particular examples of stationary solutions whose trajectories  are constant in time. At this point, we are not able to say whether the stationary solutions from Theorem~\ref{th:s1} are time dependent. However, we can deduce that time dependent stationary solutions exist and are non-unique.

\begin{corollary}\label{cor:4.6}
It holds
\begin{enumerate}
\item 	Steady state solutions are not ergodic.
\item There exist  time
	dependent stationary solutions.
\end{enumerate}
\end{corollary}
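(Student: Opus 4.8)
The plan is to obtain both parts from the ergodicity framework already in place together with Theorem~\ref{thm:4.5}. For part (1) the key observation is that the law of a steady state solution is concentrated on the set $\cC$ of constant (time independent) trajectories in $\cT$, and that each shift $S_t$ restricts to the identity on $\cC$; hence every Borel subset of $\cC$ generates a shift invariant set, so ergodicity would force $\cL[\theta,\zeta]$ to be essentially trivial on $\cC$, which contradicts the genuine randomness of the forcing $\zeta=\Lambda^{\alpha}\xi$. For part (2) one simply notes that an ergodic stationary solution exists by Theorem~\ref{thm:4.5}, and by part (1) it cannot be a steady state solution, hence it is time dependent.

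In detail, for (1): let $\cC:=\{g\in\cT:\ g(t)=g(0)\ \text{for all}\ t\in\mR\}$, a closed (hence Borel) subset of the Polish space $\cT$ on which each $S_t$ acts as the identity, while $S_t(\cT\setminus\cC)=\cT\setminus\cC$. Let $((\Omega,\mathcal F,\mathbf P),\theta,\zeta)$ be a steady state solution: then $\partial_t\theta=0$ and $\zeta$ is time independent, so $(\theta,\zeta)\in\cC$ $\mathbf P$-a.s., i.e. $\cL[\theta,\zeta](\cC)=1$. For any Borel $B_0\subset\cC$ the set $B:=B_0\cup(\cT\setminus\cC)$ satisfies $S_t^{-1}B=B$ for all $t\in\mR$, so $B$ is shift invariant, and $\cL[\theta,\zeta](B)=\cL[\theta,\zeta](B_0)$. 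If the solution were ergodic we would obtain $\cL[\theta,\zeta](B_0)\in\{0,1\}$ for every Borel $B_0\subset\cC$. But writing $g=(g_1,g_2)$ and recalling that $\zeta=\Lambda^{\alpha}\xi$ is a nondegenerate Gaussian field (its covariance is nonzero, since $\xi$ has infinitely many nondegenerate Fourier modes), there is a Borel set $A\subset B^{-2+\kappa}_{p,p}$ with $0<\mathbf P(\zeta\in A)<1$; taking $B_0:=\{g\in\cC:\ g_2(0)\in A\}$, which is Borel, gives $\cL[\theta,\zeta](B_0)=\mathbf P(\zeta\in A)\in(0,1)$, a contradiction. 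Hence no steady state solution is ergodic.

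For (2): Theorem~\ref{thm:4.5} provides an ergodic stationary solution $((\Omega,\mathcal F,\mathbf P),\theta,\zeta)$, and in fact infinitely many of them indexed by the deterministic $\theta_0$ in \eqref{eq:s56}. By part (1) such a solution is not a steady state, i.e. it is not time independent: there exists $t\in\mR$ with $\mathbf P(\theta(t)\neq\theta(0))>0$. This shows that time dependent stationary solutions exist; choosing the $\theta_0$'s far enough apart so that the balls in \eqref{eq:s56} are disjoint yields infinitely many pairwise distinct ones, each time dependent by the same argument. In particular this also establishes the claim announced in the introduction that the ergodic stationary solutions of Theorem~\ref{thm:4.5} are time dependent.

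I do not expect a real obstacle here; the only point requiring care is the logic in part (1): because the shift acts trivially on constant trajectories, ergodicity of a steady state solution degenerates to the law being concentrated on a single point of $\cC$, and it is then essential to use that $\zeta$ is genuinely random — indeed, a deterministic steady state cannot even exist, since it would have to solve an equation whose right-hand side $\zeta$ is a nontrivial random distribution. No estimate beyond \eqref{eq:s55}--\eqref{eq:s56} is needed.
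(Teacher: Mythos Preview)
Your proof is correct and follows the same overall architecture as the paper: for (1), produce a shift-invariant Borel set in $\cT$ whose probability under $\cL[\theta,\zeta]$ lies strictly between $0$ and $1$; for (2), invoke the ergodic stationary solution from Theorem~\ref{thm:4.5} and apply (1). The one substantive difference is in which component you exploit in part (1). The paper works with the first component $\theta$: it observes that a steady state $\theta$ cannot be a.s.\ equal to a deterministic function (otherwise the equation would force $\zeta$ to be deterministic), finds a Fourier mode $\hat\theta(k)$ with nontrivial distribution, and takes the shift-invariant cylinder $C(\mR;B)\times C(\mR;B^{-2+\kappa}_{p,p})$ with $B=\{g:\hat g(k)>K\}$. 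You instead use the second component $\zeta$ directly, via the set $B_0\cup(\cT\setminus\cC)$ with $B_0=\{g\in\cC:g_2(0)\in A\}$, which is slightly more economical since the nondegeneracy of $\zeta=\Lambda^\alpha\xi$ is immediate from its Gaussianity, with no need to pass through the equation. Both routes are equally valid and of comparable length; your construction of the invariant set $B_0\cup(\cT\setminus\cC)$ is a clean way to reduce ergodicity on constant trajectories to triviality of the marginal law.
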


\begin{proof}
	Let $\theta \in C(\mR;B^{- 1 / 2}_{\infty, 1})$ $\tmmathbf{P}$-a.s. be an arbitrary
	steady state solution, not necessarily constructed by Theorem \ref{thm:4.1}.
	For every Borel set $B\subset B^{-1/2}_{\infty,1}$, $C(\mR;B)\times C(\mR;B^{-2+\kappa}_{p,p}) \subset \cT$ is a shift invariant
	Borel set in $\mathcal{T}$. Hence, it is enough to find such a $B \subset
	B^{- 1 / 2}_{\infty, 1}$ so that $\mathbf{P} \circ \theta^{- 1} (C(\mR;B))\in (0,
		1)$. Since $\theta$ satisfies the equation in the analytically weak sense,
	it follows that $\theta$ is not a.s. constant. Hence there is $k \in
	\mathbb{Z}^2$ so that $\hat{\theta} (k)$ is not a.s. a constant and
	consequently there exists $K \in \mathbb{R}$ so that
	\[ \mathbf{P} (\hat{\theta} (k) > K) \in (0, 1) . \]
	Setting $B = \{ g \in B^{- 1 / 2}_{\infty, 1} ; \hat{g} (k) > K \}$
	implies the first claim.
	
	Since there exists at least one ergodic stationary solution, it  must therefore be time dependent. Hence the second claim follows.
\end{proof}

So far we have seen that the long time behavior of solutions sensitively depends on the initial condition and there exist multiple ergodic stationary solutions. The natural question is then whether the so-called ergodic hypothesis is valid for a solution $((\Omega,\mathcal{F},\mathbf{P}),\theta,\zeta)$, namely, whether the limit of the ergodic averages
\begin{equation}\label{eq:0}
\lim_{T\to\infty}\frac1T\int_{0}^{T}F(\theta(t))\,\dif t
\end{equation}
exists for any bounded continuous functional $F$ on $B^{-1/2}_{p,1}$ and is given by an ensemble average with respect to a certain probability measure.

If $((\Omega,\mathcal{F},\mathbf{P}),\theta,\zeta)$ is an ergodic  stationary solution then the desired a.s. convergence as well as convergence in $L^{1}(\Omega)$  is a consequence of Birkhoff--Khinchin's ergodic theorem (see \cite[Chapter 39]
{Kol91}) and the limit \eqref{eq:0} is  identified with the ensemble average
$$
\mathbf{E}[F(\theta(0))]=\int_{B^{-1/2}_{p,1}}F\dif \mu,
$$
where $\mu=\cL[\theta]\circ \pi_{0}^{-1}$ with $\pi_{0}:C(\mR;B^{-1/2}_{p,1})\to B^{-1/2}_{p,1}$ is the projection onto $t=0$.

Nevertheless, for a general (non-stationary) solution it is not even  clear whether the limit \eqref{eq:0} exists. Recall that for our convex integration solution we could obtain existence of a converging subsequence for  probability measures associated to the ergodic averages. The ergodic hypothesis then boils down to the question whether all subsequences converge to the same limit, in other words, whether there is a unique stationary solution generated by the ergodic averages.

\subsection{Steady state solutions}
\label{s:stat}

In this section we are concerned with steady state solutions, i.e. solutions independent of time.
The construction in this case is much simpler as there is no time mollification in each iteration step. Additionally, there are no error terms coming from the initial and terminal condition and the time derivative and we can start iteration simply from $f_{\leq0}=0$. During each step we do not need the cut-off functions $\tilde\chi$, $\chi$. We only formulate the main iteration result and the main theorem, the proofs follow the lines of Section~\ref{s:1.1}.

\begin{proposition}\label{p:iteration1}
Let $L\geq 1$ and assume \eqref{eq:u0}. There exists a choice of parameters $a, b, \beta$ such that the following holds true: Let $(f_{\leq n},q_{n})$ for some $n\in\N_{0}$ be an $\mathcal{F}$-measurable stationary solution to \eqref{induction ps} such that the frequencies of $f_{\leq n},q_n$ are localized to $\leq 6\lambda_n$ and $\leq 12\lambda_n$, respectively, and
\begin{equation}\label{inductionv ps1}
\|f_{\leq n}\|_{B_{\infty,1}^{1/2}}\leq 3M_0M_L^{1/2}
\end{equation}
for a  constant $M_0$ independent of $a,b,\beta, M_{L}$, and
\begin{align}\label{eq:R1}
\|q_n\|_{X}\leq
r_{n}.
\end{align}
 Then    there exists an $\mathcal{F}$-measurable stationary $(f_{\leq n+1},q_{n+1})$ which solves \eqref{induction ps} with $n$ replaced by $n+1$ and such that the frequencies of $f_{\leq n+1},q_{n+1}$ are localized to $\leq 6\lambda_{n+1}$ and $\leq 12\lambda_{n+1}$, respectively, and
 \begin{equation}\label{iteration ps1}
\|f_{\leq n+1}-f_{\leq n}\|_{B_{\infty,1}^{1/2}}\leq  M_0r_{n}^{1/2},
\end{equation}
\begin{equation}\label{iteration R1}
\|q_{n+1}(t)\|_{X}\leq
r_{n+1}.
\end{equation}
Consequently, $(f_{\leq n+1},q_{n+1})$ obeys \eqref{inductionv ps1},  at the level $n+1$.
Moreover, for any $\eps>0,\delta>\beta/2$ we could choose $a$ large enough depending on $M_L, C_0$ such that
\begin{align}\label{induction w2}
\|f_{\leq n+1}-f_{\leq n}\|_{B_{\infty,1}^{1/2-\delta}}\leq \eps/2^{n+1}.
\end{align}
\end{proposition}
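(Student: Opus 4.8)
The plan is to re-run the convex integration scheme of Section~\ref{s:it} with the same sequences $\lambda_n, r_n, \mu_n$ and the same choice of $a, b, \beta$ as in Section~\ref{s:p}, after deleting everything that was introduced to cope with time dependence. Since we seek steady (i.e.\ $t$-independent) pairs $(f_{\le n}, q_n)$, the term $-\partial_t\mathcal{R}f_{\le n}$ in \eqref{induction ps} is absent, there is no time mollification $f_\ell$ (hence no commutator error $R_{\mathrm{com}}$), there is no initial-data corrector $f^{in}_{n+1}$, and the cut-offs $\tilde\chi$, $\chi$ are replaced by the constant $r_n$ and the constant $1$ respectively. At step $n+1$ I would set, exactly as in Section~\ref{sec:conf} but with $\tilde\chi\equiv r_n$,
\[ \tilde f_{n+1}(x) = \sum_{j=1}^2 a_{j,n+1}(x)\cos(5\lambda_{n+1}l_j\cdot x),\qquad a_{j,n+1} = 2\sqrt{\frac{r_n}{5\lambda_{n+1}}}\,P_{\le\mu_{n+1}}\sqrt{C_0 + \mathcal{R}_j^o\frac{q_n}{r_n}}, \]
with $l_1 = (\frac35,\frac45)^T$, $l_2 = (1,0)^T$; this is well defined and $\mathcal{F}$-measurable since \eqref{eq:R1} gives $\|\mathcal{R}_j^o q_n/r_n\|_{L^\infty}\le 1$, so the radicand is $\ge C_0 - 1 > 0$. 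The new iterate is $f_{\le n+1}:=f_{\le n} + \tilde f_{n+1}$, again mean zero and $\mathcal{F}$-measurable.

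Next I would record the estimates for $f_{\le n+1}$, arguing as in \eqref{fn} and \eqref{fn3}. Because $\mu_{n+1}\ll\lambda_{n+1}$, the Fourier support of $\tilde f_{n+1}$ lies in an annulus of radius $\simeq\lambda_{n+1}$ (Remark~\ref{r:1}), so $\widehat{f_{\le n+1}}$ is supported in $\{|k|\le 6\lambda_{n+1}\}$, and the Bernstein-type bound $\|\tilde f_{n+1}\|_{B^s_{\infty,1}}\lesssim\lambda_{n+1}^s\|\tilde f_{n+1}\|_{L^\infty}\lesssim\lambda_{n+1}^{s-1/2}r_n^{1/2}$ then gives: taking $s = 1/2$, $\|f_{\le n+1} - f_{\le n}\|_{B^{1/2}_{\infty,1}} = \|\tilde f_{n+1}\|_{B^{1/2}_{\infty,1}}\le M_0 r_n^{1/2}$, i.e.\ \eqref{iteration ps1}; summing $\sum_{k\le n} M_0 r_k^{1/2}\le 3M_0M_L^{1/2}$ via \eqref{para} yields \eqref{inductionv ps1} at level $n+1$; and taking $s = 1/2 - \delta$ with $\delta > \beta/2$ and $a$ large enough (depending on $C_0$) so that $\lambda_{n+1}^{-\delta}r_n^{1/2}\le\eps/2^{n+1}$ gives \eqref{induction w2}.

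The core is the new error estimate. Subtracting \eqref{induction ps} at level $n$ from level $n+1$ and expanding $f_{\le n+1} = f_{\le n} + \tilde f_{n+1}$, I would write
\[ \nabla q_{n+1}\approx\underbrace{\Lambda\tilde f_{n+1}\nabla^\perp\tilde f_{n+1} + \nabla q_n}_{\nabla q_M} + \underbrace{\Lambda\tilde f_{n+1}\nabla^\perp f_{\le n} + \Lambda f_{\le n}\nabla^\perp\tilde f_{n+1}}_{\nabla q_T}\underbrace{-\,\nabla\Lambda^{\gamma-1}\tilde f_{n+1}}_{\nabla q_D}\underbrace{-\,\mathcal{R}\Lambda^{-1+\alpha}(P_{\le\lambda_n} - P_{\le\lambda_{n+1}})\xi}_{\nabla q_N}, \]
subtract mean-free parts and invert $\nabla$ by $\Delta^{-1}\nabla\cdot$ on them as in Section~\ref{s:estq}, so that $\widehat{q_{n+1}}$ is supported in $\{|k|\le 12\lambda_{n+1}\}$. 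The bound $\|q_M\|_X\le\frac16 r_{n+1}$ is exactly the computation of \cite[Proposition~3.1]{CKL21} with $\tilde\chi$ replaced by $r_n$; this is the step that uses the cancellation built into $a_{j,n+1}$ against $q_n$. For $q_T$ I would use \eqref{inductionv ps1} and the frequency localization of $f_{\le n}$ to get $\|q_T\|_X\lesssim\log\lambda_{n+1}\,\|\tilde f_{n+1}\|_{L^\infty}\|\Lambda f_{\le n}\|_{L^\infty}\lesssim\log\lambda_{n+1}\,(r_n/\lambda_{n+1})^{1/2}M_L^{1/2}\lambda_n^{1/2}\le\frac16 r_{n+1}$ (using $1/b+\beta<1/2$); for $q_D$, $\|q_D\|_X\lesssim\lambda_{n+1}^{\gamma-1}(r_n/\lambda_{n+1})^{1/2}\le\frac16 r_{n+1}$ (using $\beta<3/2-\gamma$); and for $q_N$, $\|q_N\|_X\lesssim L\lambda_n^{-1+\alpha+\kappa}\le\frac16 r_{n+1}$ (using $1-\alpha-\kappa>b\beta$). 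In all cases the implicit constants depend only on $M_0$ and are absorbed by taking $a$ large, so $\|q_{n+1}\|_X\le r_{n+1}$, i.e.\ \eqref{iteration R1}. For the base case I would take $f_{\le 0} = 0$, so that \eqref{induction ps} gives $\nabla q_0\approx\mathcal{R}\Lambda^{-1+\alpha}P_{\le\lambda_0}\xi$ and hence $\|q_0\|_X\lesssim L\le M_L = r_0$, while $\|f_{\le 0}\|_{B^{1/2}_{\infty,1}} = 0$; thus \eqref{inductionv ps1} and \eqref{eq:R1} hold at $n = 0$ and the proposition follows by iterating the above.

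I expect the main difficulty to be organizational rather than conceptual: one must verify that every estimate of Sections~\ref{s:it}--\ref{s:estq} still closes under the same parameter ranges once $\tilde\chi$ is frozen to the constant $r_n$. This cannot go wrong, because removing the cut-offs only deletes terms from the error — the commutator $R_{\mathrm{com}}$, the entire $q_I$ block (coming from $f^{in}_{n+1}$ and $\partial_t\mathcal{R}\tilde f_{n+1}$), and the endpoint corrections $q^1_{n+1}=(1-\chi^2)q_\ell$ all disappear — so no new term can appear, and the one genuinely nontrivial input, the high-frequency/oscillation estimate for $q_M$, is imported verbatim from \cite{CKL21}.
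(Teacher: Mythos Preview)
Your proposal is correct and follows exactly the approach the paper indicates: the paper states that the stationary construction is a strict simplification of Section~\ref{s:it} obtained by deleting the time mollification (hence $R_{\rm com}$), the initial-data corrector $f^{in}_{n+1}$, and the cut-offs $\tilde\chi,\chi$ (so $q_I$ and the endpoint terms vanish), and starting from $f_{\le 0}=0$. Your decomposition $\nabla q_{n+1}\approx \nabla q_M+\nabla q_T+\nabla q_D+\nabla q_N$, the use of the \cite{CKL21} estimate for $q_M$ with $\tilde\chi\equiv r_n$, and the parameter conditions you invoke all coincide with what the paper does in Section~\ref{s:estq} once the time-dependent pieces are removed.
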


 The iteration is initiated  by letting $f_{\leq 0}=0$. Then we obtain from \eqref{induction ps}
  $$\nabla q_0 =\mathcal{R} \Lambda^{- 1 + \alpha}
   P_{\leq \lambda_0}\xi ,$$
   which implies that
  $$\|q_0\|_X\lesssim L=M_L.$$
Consequently, $(f_{\leq0},q_{0})$ obeys \eqref{inductionv ps1} and  \eqref{eq:R1} at the level $0$.

By a repeated application of Proposition~\ref{p:iteration1} we deduce the following result.

  \bt\label{thm:4.1}
There exists an $\mathcal{F}$-measurable steady state analytically weak solution $\theta$ to \eqref{eq1} which belongs to $B_{\infty,1}^{-1/2}$  $\mathbf{P}$-a.s. Moreover, for any $\varepsilon>0$ we can find an $\mathcal{F}$-measurable steady state analytically weak solution $\theta$ such that $\|\theta\|_{B_{\infty,1}^{-1/2-\delta}}\leq \varepsilon.$ This also implies that the law of the solution is not Gaussian. There are infinitely many such solutions $\theta$. In particular, this gives infinitely many steady state solutions to the corresponding elliptic  and wave equation.

\et
\begin{proof} Most of the proof  is similar as the above. Let us only discuss non-uniqueness. The formula for $f$ reads as
$$f = \sum_{n = 0}^{\infty} \sum_{j = 1}^2 2 \sqrt{\frac{r_n}{5 \lambda_{n +
   1}}} \left( P_{\leqslant \mu_{n + 1}} \sqrt{C_0 +\mathcal{R}^o_j
   \frac{q_n}{r_n}} \right) \cos (5 \lambda_{n + 1} l_j \cdummy x) . $$
This readily implies that the solution is not unique by choosing different $C_0$, because  this is an almost explicit Fourier series.
\end{proof}

\section{Extensions to other singular models}\label{other}

We revisit some of the other systems where convex integration yields existence of non-unique solutions. It turns out that the results also apply to certain irregular spatial perturbations. In particular, we focus on fractional Navier--Stokes system in $d=3$ and the power law Navier--Stokes system in $d\geq3$. In these cases, we are able to treat spatial perturbations $\zeta$ of regularity $C^{-1+\kappa}$ for any $\kappa>0$. The stronger requirement  on regularity of $\zeta$ compared to our results on the SQG equation comes from the fact, that the error term now only gains one derivative whereas for the SQG equation it gained two.

\subsection{Fractional Navier--Stokes equation}\label{s:5.1}
Let us consider the fractional Navier--Stokes equations on $\mathbb{T}^3$. The  equations govern the time evolution of the fluid velocity $u$ and  read as
\begin{equation}
\label{1}
\aligned
 \partial_{t} u+\div(u\otimes u)+\nabla P&=-(-\Delta)^\gamma u +\zeta,
\\
\div u&=0,\\
u(0)&=u_{0}.
\endaligned
\end{equation}
Here $P$ is the associated pressure,  $\zeta\in C^{-1+\kappa}(\mathbb{T}^3)$ with mean zero and  $\kappa\in(0,1), \gamma\in(0,1].$ Hence in particular, we may consider $\zeta=\Lambda^{\alpha}\xi$ with $\alpha<-1/2$ and $\xi$ being the space white noise on $\mathbb{T}^{3}$. The same approach actually also applies to the Euler setting. When $\gamma>\frac{2-\kappa}{4}$ the equation is subcritical, when $\gamma=\frac{2-\kappa}{4}$
it is critical and when $\gamma<\frac{2-\kappa}{4}$ it is supercritical. This case we can also use convex integration from \cite{BV19a, BV19} to obtain existence and nonuniqueness of global solutions from every $u_0\in L_\sigma^2$ (see also \cite{HZZ21a}). Here and in the following, $L^2_\sigma$ means $L^2$ space with divergence free condition. Although the result in \cite{BV19a, BV19, HZZ21a} is for $\gamma=1$, it also holds for $\gamma<1$ since fractional Laplacian is easier to  control and the rest of the proof does not change. Let $P_t$ be the semigroup associated with $(-\Delta)^\gamma$ and $z=P_tu_0$. In particular, we consider  the following Navier--Stokes--Reynolds system
\begin{equation*}
\aligned
\partial_{t}v^n+\div((v^n+z^n)\otimes (v^n+z^n))+\nabla P^n&=-(-\Delta)^\gamma v^n +\zeta+\div R^n,
\\
\div v^n&=0,\\
v^n(0)&=0.
\endaligned
\end{equation*}
Here $z^n$ is a suitable projection of $z$ (see \cite{HZZ21a} for more details).
Note that unlike in the setting of the SQG equation, here we do not need to add the noise  scale by scale but we can include the full $\zeta$ in the iterative equation above.
Starting with $v^0=0$ we obtain
$$\div R^0-\nabla p^0=\div (z^0\otimes z^0)-\zeta$$
which implies that
  $$\|R^0\|_{CL^1}\lesssim \|u_0\|_{L^2}^2+\|\zeta\|_{C^{-1+\kappa}}.$$
  Then during each iteration we need to control $\div R=\psi_\ell*\zeta-\zeta$  with $\psi_\ell$ being a mollifier in space. As $\psi_\ell*\zeta-\zeta$ is of mean zero, we have
$$\|R\|_{CL^1}\lesssim \|\psi_\ell*\zeta-\zeta\|_{C^{-1+\kappa/2}}\lesssim \|\zeta\|_{C^{-1+\kappa}} \ell^{\kappa/2},$$
which is small as $\ell$ is small. We can choose $\beta$ in \cite{BV19a, BV19} small enough such that the new error can be controlled during each iteration. By similar arguments as above and changing to the convex integration  construction in \cite{BV19a, BV19, HZZ21a}, we have the following result.

 \bt\label{thm:5.1}
 Fix $\gamma\in (0,1], \kappa\in (0,1)$.
For any   initial condition $u_0\in L^2_\sigma $ $\mathbf{P}$-a.s.
there exists an $\mathcal{F}$-measurable analytically weak solution $u$ to \eqref{1} with $u(0)=u_0$ which belongs to $L^p_{\rm{loc}}(0,\infty;L^2)\cap C([0,\infty);L^{31/30})$ $\mathbf{P}$-a.s. for all $p\in[1,\infty)$. There are infinitely many such solutions $u$.

\et

\subsection{Power law fluids}
Let us consider the power law Navier--Stokes equations on $\mathbb{T}^d$ with $d\geq3$. The  equations   read as
\begin{equation}
\label{2}
\aligned
\partial_{t} u+\div(u\otimes u)+\nabla P&=\div \mathcal{A}(D u) +\zeta,
\\
\div u&=0,\\
u(0)&=u_{0}.
\endaligned
\end{equation}
Here $P$ is the associated pressure, $Du=\frac12(\nabla u+\nabla^Tu)$,  $\zeta\in C^{-1+\kappa}(\mathbb{T}^d)$ with mean zero and  $\kappa\in(0,1).$ Here, we may therefore consider $\zeta=\Lambda^{\alpha}\xi$ for $\alpha<-d/2+1$ if $\xi$ is a space white noise in dimension $d$. $\mathcal{A}$ is given by the following power law $$\mathcal{A}(Q)=(\nu_0+\nu_1|Q|)^{q-2}Q,$$
for some $\nu_0$, $\nu_1\geq0,q\in (1,\infty)$.
This case is degenerate  if $\nu_0=0$ and we cannot use regularity structures or paracontrolled calculus. However, it is possible to  use convex integration from \cite{BMS20} to obtain existence and nonuniqueness of global solutions from every $u_0\in L_\sigma^2\cap W^{1,\max(q-1,1)}$ when $q<\frac{3d+2}{d+2}$ (see also \cite{LZ22}).  More precisely, let $z=e^{t\Delta}u_0$ and  consider  the following Reynolds system 
\begin{equation*}
\aligned
\partial_{t}v^n+\div((v^n+z^n)\otimes (v^n+z^n))+\nabla P^n&=\div\mathcal{A}(Dv^n+Dz^n)-\Delta z^n +\zeta+\div R^n,
\\
\div v^n&=0,\\
v^n(0)&=0.
\endaligned
\end{equation*}
Here $z^n$ is again a suitable projection of $z$ (see \cite{LZ22} for more details).
Starting with $v^0=0$ we obtain
$$\div R^0-\nabla p^0=\div (z^0\otimes z^0)-\div(\mathcal{A}(D z^0))+\Delta z^0-\zeta,$$
which implies that
  $$\|R^0\|_{CL^1}\lesssim \|u_0\|_{L^2}^2+\|u_0\|^{q-1}_{W^{1,\max(q-1,1)}}+\|u_0\|_{W^{1,\max(q-1,1)}}+\|\zeta\|_{C^{-1+\kappa}}.$$
  Then during each iteration we need to control $\div R=\psi_\ell*\zeta-\zeta$ with $\psi_\ell$ being  a mollifier in space which is similar to Section \ref{s:5.1}.  Hence using \cite{BMS20, LZ22} we have the following result.
 \bt\label{thm:3.1}Let $q<\frac{3d+2}{d+2}$.
For any   initial condition $u_0\in L^2_\sigma \cap W^{1,\max(q-1,1)}$ $\mathbf{P}$-a.s.
there exists an $\mathcal{F}$-measurable analytically weak solution $u$ to \eqref{2} with $u(0)=u_0$ which belongs to $L^p_{\rm{loc}}(0,\infty;L^2)\cap C([0,\infty); W^{1,\max(q-1,1)})$ $\mathbf{P}$-a.s. for all $p\in[1,\infty)$. There are infinitely many such solutions $u$.

\et

%

%

\def\cprime{$'$} \def\ocirc#1{\ifmmode\setbox0=\hbox{$#1$}\dimen0=\ht0
  \advance\dimen0 by1pt\rlap{\hbox to\wd0{\hss\raise\dimen0
  \hbox{\hskip.2em$\scriptscriptstyle\circ$}\hss}}#1\else {\accent"17 #1}\fi}

\end{document}